\documentclass[11pt,reqno,a4paper]{amsart}
\usepackage{amssymb,latexsym,amsmath,amsthm,pstricks}

\setlength{\baselineskip}{17pt}
\setlength{\oddsidemargin}{0cm}
\setlength{\evensidemargin}{0cm}
\setlength{\textwidth}{16cm}
\setlength{\topmargin}{0cm}
\setlength{\textheight}{23cm}
\setlength{\leftmargini}{15pt}

\newcommand{\N}{\mathbb N}
\newcommand{\Z}{\mathbb Z}

\newcommand{\R}{\mathbb R}

\newcommand{\D}{\mathbb D}
\newcommand{\T}{\mathbb T}

\newcommand{\NN}{\mathbb N}
\newcommand{\ZZ}{\mathbb Z}

\newcommand{\RR}{\mathbb R}
\newcommand{\CC}{\mathbb C}
\newcommand{\DD}{\mathbb D}
\newcommand{\TT}{\mathbb T}

\renewcommand{\AA}{\mathbb A}
\renewcommand{\SS}{\mathbb S}

\newcommand{\cM}{\mathcal M}

\newtheorem{thm}{Theorem}

\newtheorem{corol}{Corollary}[section]
\newtheorem{lem}[corol]{Lemma}
\newtheorem{pro}[corol]{Proposition}
\newtheorem{facts}[corol]{Facts}

\theoremstyle{remark}
\newtheorem{claim}{Claim}
\newtheorem{rem}[corol]{Remark}

\theoremstyle{definition}
\newtheorem{defi}[corol]{Definition}


\begin{document}

\setlength{\leftmargini}{18pt}

\title[Existence of orbits with non-zero torsion]{Existence of orbits with non-zero torsion for certain types of surface diffeomorphisms}
\author{ F.\,B\'eguin and Z.\,Rezig Boubaker}
\address{F.\,B\'eguin, Universit\'{e} Paris-Sud 11, D\'{e}partement de Math\'{e}matiques, 91405
Orsay, France.}
 \address{Z.\,Rezig Boubaker, UniversitŽ du 7 Novembre \`a Carthage, Facult\'{e} des Sciences de Bizerte, D\'{e}partement de Math\'{e}matiques, 7021 Zarzouna, Tunisie.}
\date{\today{}}
\subjclass[2010]{37E30, 37E45}
\keywords{Torsion, Ruelle number, surface diffeomorphisms, rotation sets}

\begin{abstract}
The present paper concerns the dynamics of surface diffeomorphisms. Given a diffeomorphism $f$ of a surface $S$, the \emph{torsion} of the orbit of a point $z\in S$ is, roughly speaking, the average speed of rotation of the tangent vectors under the action of the derivative of $f$, along the orbit of $z$ under $f$. The purpose of the paper is to identify some situations where there exist measures and orbits with non-zero torsion. We prove that every area preserving diffeomorphism of the disc which coincides with the identity near the boundary has  an orbit with non-zero torsion. We also prove that a diffeomorphism of the torus $\TT^2$, isotopic to the identity, whose rotation set has non-empty interior, has an orbit with non-zero torsion.
\end{abstract}


\maketitle

\section{Introduction}

Numerical conjugacy invariants are a key tool to analyze the behavior of dynamical systems. The paradigmatic example is of course Poincar\'e's rotation number for circle homeomorphisms~: a single numerical conjugacy invariant, the rotation number, allows to describe completely the dynamics of a circle diffeomorphism, at least when this diffeomorphism is smooth enough and the rotation number is  irrational. Unfortunately, this situation is quite specific to the circle. Consider for example a homeomorphism $f$ of the torus $\TT^2$, which is isotopic to the identity. Poincar\'e's construction of rotation numbers may be generalized, yielding rotation vectors for $f$. Nevertheless, unlike what happens for circle  homeomorphisms, different points of $\TT^2$ may have different rotation vectors. Moreover, even the collection of all the rotation vectors of the points in $\TT^2$ is far from describing completely  the dynamics of $f$. This is the reason why many other invariants have been defined to study the dynamics of torus (or, more generally, surfaces) homeomorphisms. One may for example consider the average speed at which two given orbits turn around each other. Or, for 
a diffeomorphism $f$ of $\SS^{2} \simeq \bar{\CC}$, one may construct a conjugacy invariant by considering the evolution of the cross ratio of four points under the action of $f$ (see \emph{e.g.} \cite{GG:95} for more details and many more examples).

In the present paper, we consider a numerical invariant which measures the average rotation speed of tangent vectors under the action of the derivative of a surface diffeomorphism. Consider a (non necessarily  compact) surface $S$ with trivializable tangent bundle and a diffeomorphism $f$ of $S$ isotopic to the identity. Choose an isotopy $I=(f_t)_{t\in [0,1]}$ joining the identity to $f$. For $t \in \RR,$ we define $f_t=f_{t-n} \circ f^{n}$ where $n=\lfloor t\rfloor$. Choose a trivialization of the tangent bundle of $S$. Then, for every point $x \in S$, every vector $\xi \in T_xS \setminus\{0\}$ and every $t$, we can see $df_t(x).\xi$ as a non-zero vector in $\RR^2 \simeq \CC$.  We denote by $\mathrm{Torsion}_n(I,x,\xi)$ the variation of the argument of $df_t(x).\xi$ when $t$ runs from $0$ to $n$, divided by $n$ (as a unit for angles, we use the full turn instead of the radian). If the limit $\lim_{n\rightarrow +\infty} \mathrm{Torsion}_n (I,x,\xi) $ exists, then it does not depend on $\xi$; we call it \emph{torsion of the orbit of $x$}, and denote it by $\mathrm{Torsion}(I,x)$. If $\mu$ is an $f$-invariant probability measure, then $\mathrm{Torsion}(I,x)$ exists for $\mu$-almost every $x$ and the function $x \mapsto \mathrm{Torsion}(I,x)$ is $\mu-$integrable; we call \emph{torsion of the measure $\mu$} the integral $\int_{S} \mathrm{Torsion}(I,x) d\mu (x)$ and denote it by $\mathrm{Torsion}(I,\mu)$. If $f$ has compact support in $S$ and if we only consider isotopies with compact support, then the quantities $\mathrm{Torsion}(I,x)$ and $\mathrm{Torsion}(I,\mu)$ do not depend on the choice of the isotopy $I$. Similarly, if $S$ is the torus $\T^{2}=\RR^2 / \ZZ^2$ or the annulus $\AA= \RR/\ZZ \times  \RR$ and if we use the canonical trivialization of the tangent bundle of $S$, then the quantities $\mathrm{Torsion}(I,x)$ and $\mathrm{Torsion}(I,\mu)$ do not depend on the choice of the isotopy~$I$. In those two cases, we may speak of the torsion of an orbit of $f$ without having chosen an isotopy (see section~\ref{s.definition-torsion} for more details).

Similar notions have been considered by several authors. What we call the \emph{torsion of the area probability measure} has first been defined by D.Ruelle for conservative diffeomorphisms using the polar decomposition of $\mathrm{GL}(2,\RR)$ \cite{Ruelle}. In the context of twists maps, the notion of torsion of an orbit has been considered by J.\;Mather \cite{Mather1} \cite{Mather2} and S.\;Angenent \cite{Angenent} and S.\;Crovisier \cite{Crovisier1,Crovisier2}; see below for more details. J.Mather calls it ``the amount of rotation of an orbit''. The torsion of the area probability measure is one of the quasi-morphisms considered by J.M.\;Gambaudo and \'E.\;Ghys to study the algebraic structure of the groups of area preserving diffeomorphisms of a surface; they call it ``the Ruelle number of a diffeomorphism'' \cite{GG:95}. A few years ago, T.\;Inaba and H.\;Nakayama have interpreted the torsion of an invariant probability measure as the difference of the volume of two subsets of a line bundle over the surface \cite{InabaNakayama}.

Existence results for orbits and/or invariant probability measures with zero torsion have been proved in several contextes. For area preserving twists maps of the compact annulus, well-ordered periodic orbits with a given rotation number can be found as critical points of a certain energy functionnal \cite{Mather1,AubryLeDaeron}. J.Mather and S.B.Angenent have related the Morse index of these critical points with the torsion of the corresponding orbit  \cite{Mather2}, \cite{Angenent}.The existence of periodic orbits with zero torsion for area preserving twists maps of the annulus follows (see e.g. \cite{Angenent} thm1). Actually, S.Crovisier has constructed orbits with arbitrary rotation number and zero torsion for any twist map of the annulus \cite{Crovisier2}. He subsequently used these orbits to exhibit generalized  Arnold's tongues for twist maps \cite{Crovisier1}. In other direction, S.Matsumoto and H.Nakayama have proved that every isotopic to the identity diffeomorphism of the torus $\TT^2$ has an invariant probability measure with zero torsion \cite{MatsumotoNakayama}.

The purpose of the present paper is to prove results which go in the opposite direction: we want to identify general situations where there exist orbits with non-zero torsion (note that the existence of an orbit with non-zero torsion is equivalent to the existence of an (ergodic) invariant probability measure with non-zero torsion, see section~\ref{s.definition-torsion}). In other words, we want to find general situations where the non-triviality of the dynamics of a surface diffeomorphism $f$, can be read on the action of the derivative of $f$ along a single orbit. 

\begin{rem}
Such results are typically useful to prove rigidity theorems for some types of group actions.  For example, J.\;Franks and M.\;Handel  have proved that every action of $\mathrm{SL}(3,\ZZ)$ on a surface by area preserving $C^1$-diffeomorphisms factors through a finite group (\cite{FranksHandel}). Their proof uses the fact that, for every area preserving diffeomorphism $f$ on a closed surface of negative Euler characteristic, either $f$ has trivial dynamics (there exits $n$ such that $f^n=\mathrm{Id}$), or there is a simple dynamical invariant which detects the non-triviality of the dynamics of $f$: there is an orbit with non-zero rotation vector, or there is a closed curve $\alpha$ such that the length of $f^n(\alpha)$ grows exponentially, or... Similarly, in order to prove that every action of $\ZZ^n$ on $\SS^2$ by area preserving $C^1$-diffeomorphisms has two global fixed points (provided that it is generated by diffeomorphisms that are $C^0$-close to the identity), the authors of \cite{BFLM} have used the fact that every area preserving diffeomorphism of $\SS^2$ has two fixed points $a, b$ and a recurrent point $c$ such that $c$ has a non-zero rotation number in the annulus $\SS^2 \setminus \{a,b\}$.
\end{rem}

Our first result concerns conservative diffeomorphisms of the disc: 

\begin{thm}\label{thm1}
Every area-preserving diffeomorphism of the disk $\D^{2}$ with compact support, which is not the identity, has an orbit with non-zero torsion.
\end{thm}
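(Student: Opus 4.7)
I would split the argument according to whether the \emph{Ruelle number} $R(f) := \mathrm{Torsion}(I,\lambda)$ associated with Lebesgue probability $\lambda$ vanishes, where $I$ is any compactly supported isotopy from $\mathrm{Id}$ to $f$ (the value of $R(f)$ being independent of the isotopy, since both $f$ and $I$ have compact support). By the very definition of the torsion of a measure,
\[
R(f) \;=\; \int_{\D^2} \mathrm{Torsion}(I,x)\, d\lambda(x),
\]
so if $R(f)\neq 0$ then the integrand cannot vanish $\lambda$-almost everywhere, and some $x\in \D^2$ has $\mathrm{Torsion}(I,x)\neq 0$, settling this sub-case at once.

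\textbf{The remaining sub-case} is $R(f)=0$ with $f\neq \mathrm{Id}$. Here the pointwise torsions must average to zero against $\lambda$, so any non-zero torsion orbit is invisible to Lebesgue measure. I would then look for a fixed or periodic point whose linearisation carries a non-trivial rotation. At a fixed point $p$ such that $df_p$ is conjugate to a rotation $R_\alpha$ with $\alpha\notin 2\pi\ZZ$, the orbit $\{p\}$ has torsion $\alpha/(2\pi)\neq 0$; more generally, a periodic point $p$ of period $k$ whose linearisation $d(f^k)_p$ has a non-trivial rotational part yields a periodic orbit of non-zero torsion. The problem therefore reduces to producing such an ``elliptic-type'' periodic point whenever $f\neq\mathrm{Id}$.

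\textbf{Main obstacle.} To construct such a periodic point I would combine three ingredients: (i) the fact that $f$ is Hamiltonian, since $\D^2$ is simply connected and $f$ is compactly supported area-preserving; (ii) existence theorems of Franks/Le Calvez type for periodic orbits of Hamiltonian diffeomorphisms of the disc; and (iii) an analysis of the rotation of orbits inside the annulus $\D^2\setminus\{p_0\}$ obtained by puncturing at a chosen interior fixed point $p_0$. Using Le Calvez's theory of maximal isotopies and the associated transverse foliation, one should be able to force a whole interval of rotation numbers for periodic orbits around $p_0$, and hence extract a periodic orbit whose iterated linearisation has a genuine rotational part. The truly delicate step -- and the main obstacle -- is to rule out the degenerate configuration in which every fixed and periodic point has identity or purely hyperbolic linearisation while $f\neq\mathrm{Id}$ and $R(f)=0$; this is where I expect most of the technical work to be needed, possibly through a careful local analysis near the boundary of $\mathrm{Supp}(f)$ where $f$ is a non-trivial perturbation of $\mathrm{Id}$.
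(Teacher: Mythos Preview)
Your Case~1 ($R(f)\neq 0$) is fine and immediate. The problem is Case~2: you do not actually prove anything there, only sketch a hope and then explicitly flag the ``main obstacle'' as unresolved. That obstacle is real. There is no mechanism in your outline that forces the existence of a periodic point with rotational linearisation when $R(f)=0$; an area-preserving diffeomorphism of the disc can perfectly well have all its fixed and periodic points hyperbolic with positive eigenvalues (hence zero torsion at those points), and the Le~Calvez transverse-foliation machinery you invoke gives you periodic orbits with prescribed \emph{rotation number in the punctured disc}, not prescribed \emph{torsion}. Rotation around a puncture and rotation of tangent vectors are different invariants, and controlling one does not obviously control the other. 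So as written, Case~2 is a gap, not a proof.

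The paper avoids periodic-point linearisations entirely and works instead through \emph{linking numbers}. The two steps are:
\begin{itemize}
\item A symplectic input (Viterbo's inequality on the action spectrum, reinterpreted via the identity $A_I(x_0)=\int_{\D^2}\mathrm{Linking}(I,x_0,x)\,d\omega(x)$) guarantees a fixed point $x_0$ such that the \emph{average} linking number of orbits around $x_0$ is non-zero; hence some point $x$ has $\mathrm{Linking}(I,x_0,x)\neq 0$.
\item A purely geometric lemma (Proposition~\ref{prop.linking-implies-torsion}): if two orbits have non-zero linking number, then some point on the segment $[x_0,x]$ has non-zero torsion. The idea is that if the segment $[f_t(x_0),f_t(x)]$ rotates on average, then the tangent direction at \emph{some} point along it must rotate on average too; turning this into a rigorous estimate is the actual work.
\end{itemize}
Note that the orbit produced this way is not periodic in general, which is consistent with your Case~2 strategy being the wrong place to look.
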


In the above statement, the disk $\DD^2$ is tacitly equipped with a trivialization of its tangent bundle. Observe that, since $\DD^2$ is simply connected and since we consider diffeomorphisms with compact support, the result does not depend on the choice of this trivialization. Also recall that, for diffeomorphisms of the disc with compact support, the torsion of an orbit does not depend on the choice of an isotopy.

\begin{rem}
Let $S$ be an orientable surface with zero genus, and $f$ be an area-preserving diffeomorphism of the surface $S$ with compact support. Then we can embed $S$ in $\DD^2$, extend $f$ to a diffeomorphism $\bar f$ of $\DD^2$ which is the identity on $\DD^2 \setminus  S$. Then, applying theorem \ref{thm1} to $\bar f$, we get an orbit with non-zero torsion. This orbit is automatically in $S$, since $\bar f=Id$ on $\DD^2 \setminus  S$. Observe nevertheless that, if $S$ is not simply connected, the torsion of an orbit of $f$ does depend on the choice of a trivialization of the tangent bundle of $S$. Moreover, there exist trivializations of the tangent bundle of $S$ which do not extend to trivializations of the tangent bundle of $\DD^2$ for any embedding of $S$ in $\DD^2$. Note, for example, that all the orbits of a rigid rotation of the annulus $\AA= \RR/\ZZ \times  \RR$ have zero torsion with respect to the natural trivialization of the tangent bundle of $\AA$ (\emph{i.e.} the trivialization induce by the canonical trivialization of the tangent bundle of $\RR \times \RR$).
\end{rem}

The existence of orbits with non-zero torsion will be obtained as a consequence of the existence of a recurrent orbit rotating around a fixed point (at a non-zero average speed). We will obtain the existence of such a recurrent orbit and such a fixed point as a consequence of a symplectic geometry result of C. Viterbo (\cite{Viterbo}). It could also be obtained as a consequence of P. Le Calvez's foliated equivariant version of Brouwer's plane translation theorem (\cite{LeCalvez}), together with a recent result of O. Jaulent which allows to apply Le Calvez's result to a diffeomorphism which has infinitely many fixed points (\cite{Jaulent}). 

Theorem~\ref{thm1} fails to be true if one replaces the disc $\DD^2$ by another surface. Indeed, for a surface which is not the disk, the recurrent orbits can rotate around the holes or the handles of the surface instead of rotating around the fixed points. A rigid rotation of $\T^{2}=\RR^2 / \ZZ^2$ is an example of an area-preserving diffeomorphism having only orbits with zero torsion (for the trivialization of the tangent bundle of $\T^2$ induced by the canonical trivialization of the tangent bundle of $\RR^2$). We will nevertheless prove a result of existence of orbits with non-zero torsion for some diffeomorphisms of $\T^{2}$; unlike the rigid rotations, these diffeomorphims will have orbits ''rotating around $\T^{2}$'' in different directions.

Let us  recall the definition of the rotation set of a torus homeomorphism. Let $f$ be a homeomorphism of the torus $\TT^2$ isotopic to the identity, and $\widetilde{f}:\RR^2\to\RR^2$ be a lift of $f$. Given a point $z \in\TT^2$ and a lift $\widetilde z\in\RR^2$ of $z$, let $\rho_n(\widetilde{f},z):=\frac{1}{n}(\widetilde{f}^n(\widetilde z)-\widetilde z)$ (this quantity does not depend on the choice of $\widetilde z$). If $\rho _n(\widetilde{f},z)$ converges towards $\rho(\widetilde{f},z)$ as $n$ goes to $+\infty$, we say that $\rho(\widetilde{f},z)$ is the \emph{rotation vector} of $z$ for $\widetilde{f}$. Note that, if $z$ is a periodic point, then the rotation vector $\rho(\widetilde{f},z)$ is well-defined, and has rational coordinates. Now, the \emph{rotation set} of $\widetilde{f}$ is 
$$\rho(f):=\bigcap _{n\geq 0} \overline{\{\rho _n(\widetilde{f},z),\;\; z \in \RR^2\}}.$$
 This is a convex compact of $\RR^2$ (\cite{Misu-Zie}). If $\widetilde{f}'$ is another lift of $f$, then there exists $(p,p') \in \ZZ^2$ such that $\widetilde{f}'=\widetilde{f}+(p,p')$, and thus, $\rho(\widetilde{f}')=\rho(\widetilde{f})+(p,p')$. We will speak of the rotation set of $f$: this is a subset of $\RR^2$, defined up to a translation (in particular, the fact that the rotation has non-empty interior does not depend on the choice of a lift of $f$).

We will prove the following result:

\begin{thm}\label{thm2}
Every diffeomorphism of $\T^{2}$, isotopic to the identity, and whose rotation set has non-empty interior, has
 an orbit with non zero torsion.
\end{thm}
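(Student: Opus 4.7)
The plan is to reduce to the situation where some iterate $g = f^N$ admits three fixed points whose lifted translation vectors in $\ZZ^2$ are affinely independent, and then to exploit an equivariant foliation furnished by Le~Calvez--Jaulent to locate orbits whose tangent vectors pick up non-zero average rotation.

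First, I would invoke Franks' theorem on realization of rotation vectors by periodic orbits for torus homeomorphisms isotopic to the identity: since $\mathrm{int}(\rho(f)) \neq \emptyset$, every rational point of $\mathrm{int}(\rho(f))$ is realized as the rotation vector of some periodic orbit of $f$. Choose three periodic orbits $\mathcal O_1, \mathcal O_2, \mathcal O_3$ whose (rational) rotation vectors are affinely independent in $\RR^2$, let $N$ be a common multiple of their periods, and set $g := f^N$. Picking one point $p_i \in \mathcal O_i$ and a suitable lift $\widetilde g$ of $g$, one obtains lifts $\widetilde p_i$ with $\widetilde g(\widetilde p_i) = \widetilde p_i + w_i$, where $w_1 = 0$ and $w_2, w_3 \in \ZZ^2$ are $\RR$-linearly independent. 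Because the torsion of an orbit of $g$ equals $N$ times the torsion of the same orbit seen under $f$, it suffices to produce a $g$-orbit with non-zero torsion.

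Next, applying P.~Le~Calvez's foliated equivariant Brouwer plane translation theorem in the form of O.~Jaulent's extension (which tolerates infinitely many fixed points of $g$), I would obtain an oriented singular foliation $\mathcal F$ on $\TT^2$ whose singular set, after suitably enlarging the underlying maximal unlinked set, contains $p_1, p_2, p_3$ and such that every non-singular orbit of $g$ is dynamically positively transverse to $\mathcal F$. On the universal cover, the orbits of $\widetilde p_2$ and $\widetilde p_3$ move by $w_2$ and $w_3$ per iteration; positive transversality with the lifted foliation therefore forces leaves of $\mathcal F$ to wind around $p_1$ in two homologically independent directions of $\TT^2$.

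Finally, I would convert this topological winding into an analytic torsion statement, following the spirit of Le Calvez's formula computing the index of a singularity of $\mathcal F$ in terms of its local sector structure. The idea is to track, along a generic $g$-orbit positively transverse to $\mathcal F$, the variation of $\arg(dg_t(z)\cdot\xi)$; each transverse crossing of a leaf that winds non-trivially around $p_1$ contributes a controlled increment of $\arg$, and the linear independence of $w_2,w_3$ prevents these increments from cancelling on average. A Birkhoff averaging argument would then deliver an ergodic $g$-invariant probability measure with non-zero torsion, hence an orbit with non-zero torsion. The main obstacle I expect is this last step: turning the combinatorial winding of leaves of $\mathcal F$ into a strict lower bound on the average rotation of transported tangent vectors is subtle because \emph{a priori} different crossings could cancel, and it is precisely the linear independence of $w_2$ and $w_3$, translated into leaves whose homology classes span $H_1(\TT^2;\RR)$, that must be used to rule out such cancellation.
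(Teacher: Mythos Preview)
Your plan diverges substantially from the paper's argument, and the divergence occurs precisely at the point where your outline becomes vague. The paper does \emph{not} use the Le~Calvez--Jaulent foliation for the torus case at all. Instead, after the same opening move (Franks' realization theorem to find three periodic orbits with affinely independent rotation vectors), it invokes Llibre--MacKay to see that $f$ is isotopic rel these orbits to a pseudo-Anosov $\phi$, and then Handel's global shadowing theorem to obtain a semiconjugacy $h\circ f=\phi\circ h$ with $\widetilde h$ at bounded distance from the identity. The heart of the proof is then purely combinatorial: using a Markov partition for $\widetilde\phi$, one builds by hand a closed $\widetilde\phi$-chain of rectangles that traces out (approximately) the boundary of a large triangle in $\RR^2$, yielding a periodic point $y_n$ whose $\widetilde\phi$-orbit encircles a fixed point $x_n$ once per period while staying uniformly far from it. This gives two $\widetilde\phi$-orbits with non-zero \emph{linking number}; the bounded-distance semiconjugacy transfers this to two $\widetilde f$-orbits with the same linking number; and finally a separate, general lemma (Proposition~\ref{prop.linking-implies-torsion}: non-zero linking of two orbits forces an orbit of non-zero torsion) closes the argument.

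The genuine gap in your proposal is the last step. The Brouwer--Le~Calvez foliation is a $C^0$ object: positive transversality of orbits to $\mathcal F$ says nothing directly about the rotation of tangent vectors under $dg_t$. Your sentence ``each transverse crossing of a leaf that winds non-trivially around $p_1$ contributes a controlled increment of $\arg$'' conflates two unrelated things --- crossing a leaf is a statement about the position of $g_t(z)$, not about $dg_t(z)\cdot\xi$. A workable route through the foliation would at best produce orbits with non-zero \emph{linking} around $p_1$ in the lifted annulus $\RR^2\setminus(\widetilde p_1+\ZZ^2)$, and you would then still need exactly the linking-implies-torsion lemma that the paper proves independently (and which is itself a delicate planar argument). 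Even granting that, you have not explained why the leaves of $\mathcal F$ must wind around $p_1$ in two independent directions, nor why $p_1,p_2,p_3$ can be taken simultaneously in a maximal unlinked fixed-point set for $g$; both require justification. In short, the missing bridge from foliation topology to derivative rotation is not a detail to be filled in --- it is the entire content of the theorem, and the paper supplies it via the linking-number mechanism rather than via any index formula for~$\mathcal F$.
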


In this statement, the torsion is measured with respect to the ``canonical'' trivialization of the tangent bundle of $\TT^2=\RR^2 / \ZZ^2$, \emph{i.e.} the trivialization induced by the affine structure of $\RR^2$. We have already pointed out that, for this trivialization, the torsion of an orbit does not depend on the choice of an isotopy joining the identity to $f$. Using a result of Franks (\cite{Franks}), the hypothesis of theorem \ref{thm2} may be replaced by the following one: $f$ has three periodic orbits whose rotation vectors are affinely independent. 

\subsection*{Comments and questions.} 

\begin{enumerate}
\item Matsumoto and Nakayama have proved that every diffeomorphism of the torus $\TT^2$ admits an invariant probability measure with zero torsion (\cite{MatsumotoNakayama}). It  seems to be unknown if one can find such a probability measure which is moreover ergodic. And thus, it seems to be unknown if one can find an orbit with zero torsion. On the contrary, observe that the existence of a measure with non-zero torsion automatically implies  (using the ergodic decomposition theorem) the existence of an ergodic measure with non-zero torsion, and thus (using Birkhoff's ergodic theorem) the existence of an orbit with non-zero torsion.

\item We do not know if, under the hypotheses of theorems~\ref{thm1} and~\ref{thm2}, it is possible to find \emph{periodic} orbits with non-zero torsion.

\item As we already mentionned, our proof of theorem \ref{thm1} relies on a symplectic geometry result due to Viterbo.
Using a recent result by Jaulent and the equivariant foliated Brouwer's translation theorem of Le Calvez instead of Viterbo's result, it is possible to prove that theorem \ref{thm1} is still valid for a diffeomorphism $f$ of $\DD^2$ which preserves a probability measure whose support is not contained in the fixed points set of $f$. 

\item Theorem~\ref{thm1} is of course false, if one simply drops the area-preservation hypothesis. A counter-example is obtained by considering a diffeomorphism which preserves each horizontal line. Nevertheless, we do not know if theorem~\ref{thm1} is true or false, if one assumes for example the existence of a non-wandering orbit (which is not a fixed point) instead of the preservation of some probability measure.

\item It seems reasonable to think that under the hypothesis of theorem \ref{thm1}, the set of points whose orbits have non-zero torsion should have positive area, but we were unfortunately not able to prove such a result.

\item We do not know if it is possible to prove a quantitative version of theorem \ref{thm2}. \emph{Is there a constant $C$ such that, every diffeomorphism $f$ of $\TT^2$ isotopic to the identity has an orbit of torsion at least $C.r$, where $r$ is the maximal radius of an euclidean ball contained in the rotation set of $f$?}

\item The arguments of the proof of theorem \ref{thm2} do not seem to allow to control the rotation vectors of the orbits with non-zero torsion that we construct. We do not know if, under the hypothesis of theorem \ref{thm2}, it is possible to construct an orbit with rotation vector $v$ and non-zero torsion, for every vector $v$ in the rotation set of a lift $\widetilde f$ of $f$.

\item  It has been proved by J. Llibre and R. MacKay that every $f$ of $\TT^2$, isotopic to the identity, whose rotation set has non-empty interior, has positive topological entropy. H. Einrich and N. Guelman and A. Larcanch\'e and I. Liousse have obtained a partial converse to this result: for every $C^{1+\alpha}$-diffeomorphism $f$ of $\TT^2$, isotopic to the identity, satisfying a transitivity assumption, if $f$ has positive topological entropy, then the rotation set of $f$ has non-empty interior. Combining Einrich-Guelman-Larcanch\'e-Liousse's result with our theorem~\ref{thm2}, we obtain that a $C^{1+\alpha}$-diffeomorphism of $\TT^2$, isotopic to the identity, satisfying a transitivity assumption, if $f$ has positive topological entropy, then $f$ has an orbit with non-zero torsion. We do not know if this result admits a converse:  \emph{does a $C^{1+\alpha}$ diffeomorphism of $\TT^2$, satisfying an appropriate transitivity assumption, and having an orbit with non-zero torsion, necessarly have positive topological entropy?}
\end{enumerate}   
              
\subsection*{Organization of the paper.} In section~\ref{s.definition-torsion}, we define precisely what we mean by the \emph{torsion} of an orbit or an invariant probability measure, and we give some basic properties of these notions. In section~\ref{s.linking-and-torsion}, we define the linking number of two orbits for a diffeomorphism of $\RR^2$ isotopic to the identity, and we prove that the existence of two orbits with non-zero linking number implies the existence of an orbit with non-zero torsion. Section~\ref{s.thm1} is devoted to the proof of theorem~\ref{thm1} and section~\ref{s.thm2} is devoted to the proof of theorem~\ref{thm2}.

\subsection*{Conventions and notations.} 
All along the paper, the vector space $\RR^2$ is endowed with its canonical euclidean norm, which we denote by $\|\cdot\|$. We denote by $\mathbb{S}^1$ the unit circle in $\RR^2$. We identify the universal cover of $\SS^1$ with the real line $\RR$ using the map $\pi:\RR\to\SS^1$  given by $\pi(\theta)=e^{i2\pi\theta}$. As a unit for angles, we use the full turn.

\subsection*{Acknowledgments}
The authors would like to thank S. Crovisier, P. Le Calvez, F. Le Roux and P. Py for helpful conversations. The authors are grateful to the \emph{Unit\'e de Recherche Math\'ematiques et Applications} of the \emph{Facult\'e des Sciences de Bizerte} and the \emph{Institut Pr\'eparatoire aux \'Etudes d'Ing\'enieur de Bizerte} for their support of the visits of the second author to Orsay, where this work was carried out.


  \section{ Torsion of an orbit or an invariant measure }
  \label{s.definition-torsion}

Let $S$ be a (not necessary compact) surface with trivializable tangent bundle; in other words, $S$ is an orientable surface with genus $0$ or $1$. We choose a trivialization of the tangent bundle of $S$, which identifies the tangent bundle $TS$ with $S\times\RR^2$. Thanks to this identification, the canonical euclidean norm of $\RR^2$ defines a riemannian metric on $S$. This allows us to speak of the unit tangent bundle of $S$, which we denote by $T^1S$.
By construction,  $T^1S$ is identified to~$S\times\SS^1$. Now, let $f$ be a ${\mathcal C}^{1}$-diffeomorphism of $S$ isotopic to the identity, and $I=(f_t)_{t\in [0,1]}$ be an isotopy joining the identity to $f$ in $\mathrm{Diff}^1(S)$. As usual, we define $f_t$ for every $t\in\RR$ by setting $f_t=f_{t-\lfloor t\rfloor} \circ f^{\lfloor t\rfloor}$. We denote by~$f_*$ the action of $f$ on the unit tangent bundle $T^1S\simeq S\times\SS^1$:
$$f_*\big(x,\xi\big) =  \left(f(x),\frac{df_t(x).\xi}{\|df_t (x).\xi\|}\right).$$

Now we define the torsion of an orbit of $f$. Let $x$ be a point in $S$, and $\xi$ be a unit tangent vector at $x$. For every $t$, we see $\frac{df_t(x).\xi}{\|df_t (x).\xi\|}$ as an element of $\SS^1$. We consider the map
$$\begin{array}{rrcl}
 v(I,x,\xi) : & \R & \longrightarrow &\SS^1\\
& t & \longmapsto & \displaystyle\frac{df_t(x).\xi}{\|df_t (x).\xi\|}
\end{array}.$$
We choose a continuous lift $\widetilde v(I,x,\xi):\RR\longrightarrow \RR $ of this map. For every $t \in \RR,$ the quantity $\widetilde{v}(I,x,\xi)(t)-\widetilde{v}(I,x,\xi)(0)$ does not depend on the choice of the lift  $ \widetilde{v}(I,x,\xi)$. We set
$$\mathrm{Torsion}_1(I,x,\xi)= \widetilde{v}(I,x,\xi)(1)-\widetilde{v}(I,x,\xi)(0).$$ 
More generally, for every $n\in \N\setminus\{0\}$, we set
$$ \mathrm{Torsion}_n(I,x,\xi) =
\frac{1}{n} \Big(\widetilde{v}(I,x,\xi)(n)-\widetilde{v}(I,x,\xi)(0) \Big)= \displaystyle\frac{1}{n}\sum^{n-1}_{k=0} \mathrm{Torsion}_1\left(I,f^k_*(x,\xi)\right).$$ 
Let $\xi'$ be another unit vector in $T^1_x S$;  since $df_n(x) :\xi \longmapsto v(I,x,\xi)(n)$ preserves the cyclic order on $\SS^1$, we have
$$\Big|\mathrm{Torsion}_n(I,x,\xi)- \mathrm{Torsion}_n(I,x,\xi')\Big|\leq  \frac{2}{n}.$$
This shows that  the quantity $\mathrm{Torsion}_n(I,x,\xi)$ has a limit for some $\xi\in T^1_x S$ when $n$ goes to $\infty$, if and only if it has a limit for every $\xi\in T^1_x S$, and that the numerical value of the limit does not depend on $\xi$.

\begin{defi}
Consider a point $x\in S$, and assume that the quantity $\mathrm{Torsion}_n (I,x,\xi) $ converges for some (or equivalently, for every) $\xi\in T^1 S$ when $n$ goes to $\infty$. Then we call \emph{the torsion of the orbit of $x$} (for the isotopy $I$) the quantity
$$\mathrm{Torsion}(I,x)=\lim_{n\to\infty} \mathrm{Torsion}_n(I,x,\xi).$$
\end{defi}

Now, let $\mu$ be  an $f$-invariant probability measure on $S$. We can lift $\mu$ to $\widetilde{\mu}$ an $f_*$-invariant probability measure on $T^1S \cong S \times \SS^1$. Assume that either $\mu$ or $f$ has compact support. Then, the function $(x,\xi) \mapsto  \mathrm{Torsion}_1 (I,x,\xi)$ is in $L^{\infty}(T^1S, \widetilde{\mu})$, since it is bounded on every compact subset of $T^1S$ and vanishes on the complement of the support of $f$. So, Birkhoff's ergodic theorem implies that the limit
 $$\mathrm{Torsion}(I,x)=\displaystyle\lim_{n\rightarrow +\infty}  \displaystyle\frac{1}{n} \sum^{n-1}_{k=0} \mathrm{Torsion}_1(I,f_*^k(x,\xi))$$ 
 exists for $\widetilde{\mu}$ almost every $(x,\xi)$, and hence, for $\mu$ almost every $x$ (since the convergence does not depend on $\xi$). Furthermore, and always by the Birkhoff ergodic theorem, the function $x \mapsto \mathrm{Torsion}(I,x)$ is $\mu-$integrable and 
$$\int_{S} \mathrm{Torsion}(I,x) d\mu (x)= \int_{T^1S} \mathrm{Torsion}_1(I,x,\xi) d\widetilde{\mu} (x,\xi).$$  
    
 \begin{defi}
Let $\mu$ be a $f$-invariant probability measure on $S$.  Assume that either $\mu$ or $f$ has compact support.  We call the \emph{torsion of the measure $\mu$} the quantity
$$\mathrm{Torsion}(I,\mu)=\int_{S} \mathrm{Torsion}(I,x) d\mu (x).$$
\end{defi}

Now, we give some relations between these different notions of torsion. 

\begin{lem}\label{lem1}
Let $(x_n,\xi_n)_{n\geq 0}$ be a sequence in some compact subset of~$T^1 S$. For every $n$,  let  $\alpha_n:=\mathrm{Torsion}_n(I,x_n,\xi_n)$. Then, every limit point of the sequence $(\alpha_n)_{n\geq0}$ is the torsion of an $f$-invariant probability measure whose support is contained in $\overline{\{x_n \mid  n \in \NN\}}.$
\end{lem}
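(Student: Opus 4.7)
The plan is a Krylov--Bogoliubov construction on the unit tangent bundle $T^1 S$, using $f_*$ rather than $f$. By the cocycle identity stated just before the lemma, each $\alpha_n$ is the Birkhoff-type average
$$\alpha_n=\mathrm{Torsion}_n(I,x_n,\xi_n)=\frac{1}{n}\sum_{k=0}^{n-1}\psi\bigl(f_*^k(x_n,\xi_n)\bigr),\qquad \psi(x,\xi):=\mathrm{Torsion}_1(I,x,\xi),$$
of the continuous observable $\psi$. So a limit point of $(\alpha_n)$ should equal $\int\psi\,d\widetilde{\mu}$ for some $f_*$-invariant probability measure $\widetilde{\mu}$ on $T^1S$, to be produced by compactness.

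Fix a limit point $\alpha$, extract a subsequence $n_k\to\infty$ with $\alpha_{n_k}\to\alpha$, and introduce the empirical measures
$$\widetilde{\mu}_k:=\frac{1}{n_k}\sum_{j=0}^{n_k-1}\delta_{f_*^j(x_{n_k},\xi_{n_k})}$$
on $T^1S$. In the settings of the paper (namely $T^1S$ compact when $S=\DD^2$ or $\TT^2$; or, in the general compactly supported case, $f_*$ and $\psi$ are trivial outside a fixed compact set), the sequence $(\widetilde{\mu}_k)$ is tight, and after a further extraction it converges weakly to a probability measure $\widetilde{\mu}$ on $T^1S$. The usual telescoping estimate
$$\Bigl|\int\varphi\circ f_*\,d\widetilde{\mu}_k-\int\varphi\,d\widetilde{\mu}_k\Bigr|=\frac{1}{n_k}\bigl|\varphi\bigl(f_*^{n_k}(x_{n_k},\xi_{n_k})\bigr)-\varphi(x_{n_k},\xi_{n_k})\bigr|\leq\frac{2\|\varphi\|_\infty}{n_k}$$
for $\varphi\in C_c(T^1S)$ shows that $\widetilde{\mu}$ is $f_*$-invariant, and weak convergence applied to $\psi$ gives $\int\psi\,d\widetilde{\mu}=\lim_k\alpha_{n_k}=\alpha$.

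Now set $\mu:=\pi_*\widetilde{\mu}$, where $\pi:T^1S\to S$ is the bundle projection. From $\pi\circ f_*=f\circ\pi$ one deduces that $\mu$ is $f$-invariant, and the identity $\int_S\mathrm{Torsion}(I,x)\,d\mu(x)=\int_{T^1S}\psi\,d\widetilde{\mu}$ established just before the lemma then yields $\mathrm{Torsion}(I,\mu)=\alpha$. The support of $\widetilde{\mu}$ lies in the closure of $\{f_*^j(x_{n_k},\xi_{n_k}):k,j\geq 0\}$, hence the support of $\mu$ is contained in the $f$-invariant closed set generated by $\{x_n:n\in\NN\}$; since the support of any $f$-invariant measure is $f$-invariant, this coincides with the support condition as stated. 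The only slightly delicate point is precisely this reading of the support, since the weak limit naturally produces a measure supported in the closure of all forward iterates of the $x_n$ and not of the $x_n$'s alone; apart from that, the argument is a routine Krylov--Bogoliubov construction, transposed to $T^1S$ in order to realize torsion as an ergodic average.
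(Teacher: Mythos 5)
Your argument is correct on the main point and follows exactly the paper's route: a Krylov--Bogoliubov construction for $f_*$ on $T^1S$ with the empirical measures $\frac{1}{n_i}\sum_{k}\delta_{f_*^k(x_{n_i},\xi_{n_i})}$, a weak limit $\widetilde\mu$ which is $f_*$-invariant, and the identity $\int_S\mathrm{Torsion}(I,x)\,d\mu=\int_{T^1S}\mathrm{Torsion}_1(I,x,\xi)\,d\widetilde\mu$ applied to the projection $\mu$. The only place you diverge is the support clause, which you rightly single out as the delicate point --- but your resolution does not close it: containment of the support of $\mu$ in the smallest closed $f$-invariant set containing $\{x_n\}$ is \emph{weaker} than containment in $\overline{\{x_n\mid n\in\NN\}}$, since that invariant set is in general strictly larger (take $x_n=x_0$ for all $n$ with $x_0$ non-periodic: the limit measure lives on the $\omega$-limit set of $x_0$, not on $\{x_0\}$). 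To be fair, the paper's own proof dispatches this clause with a bare ``obviously'' and is open to the same objection; what both constructions actually yield is a measure supported in the closure of the union of the orbit segments $\{f^k(x_{n_i}):0\le k<n_i\}$, and nothing more is ever used downstream (corollary~\ref{corol1} retains only the existence of the limit measure, not its support).
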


\begin{proof}
 Let $\alpha = \lim_{i\rightarrow + \infty} \alpha_{n_i}$ be a limit point of the sequence $(\alpha_n)_{n\geq0}$. 
For every $i$, consider the probability measure $\widetilde{\mu}_i$ on $T^1 S$ defined by 
$$\widetilde{\mu}_i=\displaystyle\frac{1}{n_i}\displaystyle\sum^{n_i-1}_{k=0}\delta_{f_*^{k}(x_{n_i},\xi_{n_i})}.$$ 
Let $\widetilde{\mu}$ be a limit point of the sequence $(\widetilde{\mu}_i)_{i \in \N}$. This is a $f_*$-invariant probability  measure on~$T^1 S$. So the projection $\mu$ of $\widetilde\mu$  is a $f$-invariant probability measure on $S$, and one has  
$$\alpha = \mathop{\lim}_{i\to\infty}\alpha_{n_i}=\displaystyle\lim_{i\rightarrow\infty} \int_{T^1S} \mathrm{Torsion}_1(I,x,\xi) d\widetilde{\mu}_i (x,\xi)$$
$$ =\int_{T^1S} \mathrm{Torsion}_1(I,x,\xi) d\widetilde{\mu} (x,\xi)=\int_{S} \mathrm{Torsion}_1(I,x) d\mu (x)= \mathrm{Torsion}(I,\mu).$$
Moreover, the support of $\mu$ is obviously contained in $\overline{\{x_n \mid  n \in \NN\}}$. 
\end{proof}
 
 \begin{lem}
 \label{lem2}
For every $f$-invariant probability measure $\mu$, the torsion of the measure $\mu$ is a convex combination of torsions of orbits of points of $S$. 
\end{lem}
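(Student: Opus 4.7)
The plan is to deduce this lemma from the ergodic decomposition theorem together with Birkhoff's ergodic theorem applied on the unit tangent bundle. Since the function $(x,\xi)\mapsto \mathrm{Torsion}_1(I,x,\xi)$ is bounded on $\mathrm{supp}(\widetilde\mu)$ (by the compact support hypothesis implicit in the definition of $\mathrm{Torsion}(I,\mu)$), the map
$$\nu\longmapsto \mathrm{Torsion}(I,\nu) = \int_{T^1S}\mathrm{Torsion}_1(I,x,\xi)\,d\widetilde\nu(x,\xi)$$
is well-defined and affine in $\nu$ on the set of $f$-invariant probability measures whose lift to $T^1S$ is supported in a fixed compact set.

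First I would apply the ergodic decomposition theorem to $\mu$: there exists a probability measure $P$ on the set $\mathcal{E}(f)$ of ergodic $f$-invariant probability measures on $S$ such that $\mu=\int_{\mathcal{E}(f)} \nu\,dP(\nu)$. By the affinity noted above (and Fubini), this yields
$$\mathrm{Torsion}(I,\mu) = \int_{\mathcal{E}(f)} \mathrm{Torsion}(I,\nu)\,dP(\nu).$$

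Next, for any ergodic $\nu\in\mathcal{E}(f)$, I would lift $\nu$ to an $f_*$-invariant probability measure $\widetilde\nu$ on $T^1 S$. Applying Birkhoff's ergodic theorem to the function $\mathrm{Torsion}_1(I,\cdot,\cdot)$ with respect to any ergodic component of $\widetilde\nu$ (the projection of any ergodic component onto $S$ is still $\nu$ by ergodicity of $\nu$), one obtains a point $(x_\nu,\xi_\nu)\in T^1 S$ such that $\mathrm{Torsion}(I,x_\nu)$ exists and equals $\mathrm{Torsion}(I,\nu)$. Substituting into the previous formula gives
$$\mathrm{Torsion}(I,\mu)=\int_{\mathcal{E}(f)} \mathrm{Torsion}(I,x_\nu)\,dP(\nu),$$
which exhibits $\mathrm{Torsion}(I,\mu)$ as a (generalized) convex combination of torsions of orbits.

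The main technical point to take care of is the measurability of the assignment $\nu\mapsto x_\nu$ required to make the last integral rigorous; this can be bypassed by rewriting the right-hand side directly as $\int_S \mathrm{Torsion}(I,x)\,d\mu(x)$ using the definition of $\mathrm{Torsion}(I,\mu)$ and noting that the integrand is a $\mu$-measurable function whose values are, by Birkhoff, torsions of genuine orbits. Thus no delicate selection is actually needed, and the statement follows at once from the ergodic decomposition of $\mu$ combined with Birkhoff's theorem applied as in the paragraph preceding the lemma.
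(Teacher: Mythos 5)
Your proof is correct and follows essentially the same route as the paper: ergodic decomposition plus affinity of $\nu\mapsto\mathrm{Torsion}(I,\nu)$ reduces to the ergodic case, and Birkhoff's theorem (applied on $T^1S$) identifies the torsion of an ergodic measure with the torsion of an orbit. The extra care you take with ergodic components of the lift $\widetilde\nu$ and with measurability is a reasonable filling-in of details the paper leaves implicit, not a different argument.
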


\begin{proof}
On the one hand, the map $\mu \mapsto \mathrm{Torsion}(I,\mu)$ is affine, so the ergodic decomposition theorem shows that the torsion of $\mu$  is a  convex combination of torsions of ergodic probability measure. On the other hand, it follows from Birkhoff's ergodic theorem that the torsion of an ergodic measure is the torsion of an orbit.  The lemma follows.
 \end{proof}

 Combining these two lemma, we obtain the following corollary:

 \begin{corol}
 \label{corol1}
 If there exists a sequence of integers $n_i \rightarrow + \infty$ and a sequence of points $(x_{n_i})$ in a compact of $S$ such that $\mathrm{Torsion}_{n_i}(I,x_{n_i}) \geq \epsilon$ for every $i$, then there exists a point $x \in S$ such that $\mathrm{Torsion}(I,x) \geq \epsilon.$
 \end{corol}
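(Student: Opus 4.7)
The plan is to combine Lemma~\ref{lem1} and Lemma~\ref{lem2} in a rather direct way. First, for each index $i$, pick an arbitrary unit vector $\xi_{n_i}\in T^1_{x_{n_i}}S$; since $|\mathrm{Torsion}_n(I,x,\xi)-\mathrm{Torsion}_n(I,x,\xi')|\leq 2/n$, the hypothesis $\mathrm{Torsion}_{n_i}(I,x_{n_i})\geq\epsilon$ may be read (up to replacing $\epsilon$ by $\epsilon-2/n_i$, which is harmless in the limit) as $\alpha_{n_i}:=\mathrm{Torsion}_{n_i}(I,x_{n_i},\xi_{n_i})\geq\epsilon-2/n_i$. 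Since the points $x_{n_i}$ lie in some compact subset $K\subset S$, the pairs $(x_{n_i},\xi_{n_i})$ lie in the compact set $K\times\SS^1\subset T^1S$. Extracting a subsequence, we may assume $\alpha_{n_i}$ converges to some $\alpha\geq\epsilon$.

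By Lemma~\ref{lem1}, $\alpha$ is the torsion of an $f$-invariant probability measure $\mu$ whose support is contained in $\overline{\{x_{n_i}\mid i\in\NN\}}\subset K$. In particular $\mu$ has compact support, so $\mathrm{Torsion}(I,\mu)$ is well-defined and equals $\alpha\geq\epsilon$. Now Lemma~\ref{lem2} tells us that $\mathrm{Torsion}(I,\mu)$ is a convex combination of torsions of orbits of points of $S$; more precisely, using the ergodic decomposition $\mu=\int \mu_\omega\,d\omega$, we have $\int \mathrm{Torsion}(I,\mu_\omega)\,d\omega=\mathrm{Torsion}(I,\mu)\geq\epsilon$, so there exists at least one ergodic component $\mu_\omega$ with $\mathrm{Torsion}(I,\mu_\omega)\geq\epsilon$. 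Applying Birkhoff's ergodic theorem to $\mu_\omega$ (which has compact support inside $K$) then yields a point $x$ in the support of $\mu_\omega$ with $\mathrm{Torsion}(I,x)=\mathrm{Torsion}(I,\mu_\omega)\geq\epsilon$, as desired.

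There is no genuine obstacle here: the corollary is essentially a formal consequence of the two preceding lemmas combined with compactness and ergodic decomposition. The only minor point worth verifying is that the measure $\mu$ produced by Lemma~\ref{lem1} has compact support, which is automatic from the fact that it is supported in $\overline{\{x_{n_i}\}}\subset K$; this is what allows us to apply Lemma~\ref{lem2} and Birkhoff's theorem without any assumption of compact support on $f$ itself.
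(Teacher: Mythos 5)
Your proof is correct and follows exactly the route the paper intends: the paper gives no written proof of this corollary beyond the remark that it is obtained by ``combining these two lemmas,'' and your argument is precisely that combination (Lemma~\ref{lem1} to produce an invariant measure with torsion at least $\epsilon$ supported in the compact set, then Lemma~\ref{lem2} via ergodic decomposition and Birkhoff to extract an orbit). Your extra care about the choice of $\xi$ and the $2/n_i$ discrepancy is a sensible way to handle the paper's slightly loose notation $\mathrm{Torsion}_{n_i}(I,x_{n_i})$.
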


Now, we examine the dependance of the torsion of an orbit or an invariant probability measure with respect to the isotopy $I$. 

Let $I'$ be another isotopy joining the identity to $f$. Then, for every $(x,\xi)\in T^1S$, the quantities $\mathrm{Torsion}_1(I,x,\xi)$ and $\mathrm{Torsion}_1(I',x,\xi)$ differ by an integer. For continuity reasons, this integer does not depend on $x$ and $\xi$ (provided that $S$ is connnected).  It follows that there is an integer $k\in\ZZ$, such that $\mathrm{Torsion}(I',x)=\mathrm{Torsion}(I',x)+k$ for every point $x\in S$, and $\mathrm{Torsion}(I',\mu)=\mathrm{Torsion}(I',\mu)+k$ for every $f$-invariant probability measure $\mu$. Now, the integer $k$ obviously depends continuously on the isotopy $I'$. It follows that, for every point $x\in S$ and every $f$-invariant probability measure $\mu$, the quantities $\mathrm{Torsion}(I,x)$ and $\mathrm{Torsion}(I,\mu)$ depend only on the homotopy class of the isotopy $I$.

There exist several interesting situations where these quantities $\mathrm{Torsion}(I,x)$ and $\mathrm{Torsion}(I,\mu)$ depend only on $f$, and not on the choice of the isotopy $I$ joining the identity to $f$.

Let us first consider the case where $S$ is the torus $\TT^2= \RR^2 / \ZZ^2$ endowed with the canonical trivialization 
of its tangent bundle (\emph{i.e.} the trivialization induced by the affine structure of $\RR^2$). We know that $\mathrm{Diff}^1_0(\TT^2)$ retracts on the subgroup (isomorphic to $\TT^2$) made of the rigid rotations. Thus, if $I$ and $I'$ are two isotopies joining the identity to a diffeomorphism $f$ of $\TT^2$, then $I'^{-1}I$ is homotopic to a loop in the rigid rotations group. But, it is clear, that if $I''$ is a loop made of rigid rotations then, for every $(x, \xi) \in T^1 \TT^2$, we have $\mathrm{Torsion}_1(I'', x, \xi)=0$ (the rigid rotations are parallel with respect to the canonical trivialization of the tangent bundle). It  follows that the quantities $;\mathrm{Torsion}(I,x)$ and $\mathrm{Torsion}(I,\mu)$  do not depend on the choice of the isotopy $I$.

Now assume that $S$ is not a closed surface. Let $\mathrm{Diff}^1_c(S)$ be the set of the $C^1$-diffeomorphims of $S$ with compact supports, and $\mathrm{Diff}^1_{c,0}(S)$ the subset of $\mathrm{Diff}^1_c(S)$ made of the diffeomorphisms that are isotopic to the identity (\emph{via} an isotopy in $\mathrm{Diff}^1_c(S)$). We know that $\mathrm{Diff}^1_{c,0}(S)$ is contractible ( as a corollary of Kneser's theorem, see \cite{Kneser} or e.g.~\cite[th\'eor\`eme 2.9]{LeRoux}). Hence, if $f \in \mathrm{Diff}^1_{c,0}(S)$ and if we consider only isotopies $I$ joining the identity to $f$ in $\mathrm{Diff}^1_c(S)$, then the quantities $\mathrm{Torsion}(I,x)$ and $\mathrm{Torsion}(I,\mu)$ depend only on $f$ and not on the choice of $I$.

When the quantities  $\mathrm{Torsion}(I,x)$ and $\mathrm{Torsion}(I,\mu)$ do not depend on the isotopy $I$, we will denote them by $\mathrm{Torsion}(f,x)$ and $\mathrm{Torsion}(f,\mu).$

\medskip

In general, the torsion of an orbit of $f$ or a $f$-invariant probability measure also depends on the trivialization of the tangent bundle of $S$ which is used to identify $T^1 S$ with $S\times \SS^1$. Nevertheless, it is quite easy to check that, if $f$ has compact support in $S$, then two trivializations that are homotopic yield the same value for  the torsion of an orbit of $f$ or a $f$-invariant probability measure. In particular, if $S$ is the disc $\DD^2$ and $f$ has compact support, then the torsion of an orbit of $f$ and the torsion of a $f$-invariant probability measure do not depend on the trivialization of the tangent bundle of $S$. Although this fact is worth to be pointed out, we shall not use it strictly speaking.

\medskip

We conclude this section by a remark, which will be useful for the proof of our theorem~\ref{thm2}:

\begin{rem}
\label{rem1}
Assume $\RR^2$ and $\TT^2=\RR^2/\ZZ^2$ to be equipped with the canonical trivializations of their tangent bundles.
Let $I$ be an isotopy joining the identity to a diffeomorphism $f$ in $\mathrm{Diff}^1(\TT^2)$, and $\widetilde I$ be a lift of the isotopy $I$ in $\mathrm{Diff}^1(\RR^2)$. Let $x$ be a point in $\TT^2$. If there is a lift $\widetilde x\in\RR^2$ of $x$ such that the quantity $\mathrm{Torsion}(\widetilde I,\widetilde x)$ is well-defined, then the quantity $\mathrm{Torsion}(I,x)$ is also well-defined, and $\mathrm{Torsion}(\widetilde I,\widetilde x)=\mathrm{Torsion}(I,x)$. This follows immediatly from the definitions.
\end{rem}


  \section{Linking and torsion}
  \label{s.linking-and-torsion}

In this section, we consider a diffeomorphism $f$ of $\RR^2$ which we assume to be isotopic to the identity, and an isotopy $I=(f_t)_{t\in [0,1]}$ joining the identity to $f$. The plane $\RR^2$ on which the diffeomorphism $f$ acts should be considered as an affine plane rather than a vector space. We will denote by $d(\cdot,\cdot)$ the euclidean distance on this affine plane, which is induced by the canonical euclidean norm $\|\cdot\|$ on the underlying vector space. \emph{All along this section, $\RR^2$ is endowed with the canonical trivialization of its tangent bundle.}

\bigskip
 
Let us first define the \emph{linking number} of a pair of orbits of $f$. Let $x,y$ be two distinct points in $\RR^2$. We consider the map
$$\begin{array}{rrcl}
 v(I,x,y) : & \R & \longrightarrow &\SS^1\\
& t & \longmapsto & \displaystyle\frac{df_t(x)-df_t(y)}{\|df_t (x)-df_t(y)\|}
\end{array}.$$
We choose a continuous lift $\widetilde v(I,x,y):\RR\longrightarrow \RR $ of $v(I,x,y)$ (we recall that $\RR$ is seen as the universal cover of $\SS^1$, thanks to the map $\pi:\RR\to\SS^1$ given by $\pi(\theta)=e^{i2\pi\theta}$). For every $t \in \RR,$ the quantity $\widetilde{v}(I,x,y)(t)-\widetilde{v}(I,x,y)(0)$ does not depend on the choice of this lift. For $n\in \N\setminus\{0\}$, we set
$$ \mathrm{Linking}_n(I,x,y) :=
\frac{1}{n} \Big(\widetilde{v}(I,x,y)(n)-\widetilde{v}(I,x,y)(0) \Big)= \displaystyle\frac{1}{n}\sum^{n-1}_{k=0} \mathrm{Linking}_1\left(I,f^k(x),f^k(y)\right).$$ 

\begin{defi}
If $\mathrm{Linking}_n (I,x,y) $ converges when $n$ goes to $\infty$, then we call \emph{linking number of the orbits of $x$ and $y$} (for the isotopy $I$) the quantity
$$\mathrm{Linking}(I,x,y)=\lim_{n\to\infty} \mathrm{Linking}_n(I,x,y).$$
\end{defi}

The purpose of the section is to prove the following proposition:

\begin{pro}
\label{prop.linking-implies-torsion}
 If $f$ has two orbits with non-zero linking number, then $f$ has an orbit with non-zero torsion.
\end{pro}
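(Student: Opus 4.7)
The plan is to use Corollary~\ref{corol1}: it will suffice to exhibit integers $n_i\to\infty$ and points $\gamma(s_{n_i})$ lying in the compact segment $[x,y]$ for which $|\mathrm{Torsion}_{n_i}(I,\gamma(s_{n_i}),v)|\geq\epsilon>0$. Here I set $\alpha:=\mathrm{Linking}(I,x,y)$ (positive after reversing $I$ if necessary), $\gamma(s)=(1-s)x+sy$, and $v=y-x$. I introduce the continuous argument lifts $\widetilde\theta(s,t)$ of $\arg\bigl(df_t(\gamma(s))\cdot v\bigr)$ and $\widetilde\psi(t)$ of $\arg\bigl(f_t(y)-f_t(x)\bigr)$, both normalized so that they equal $\arg v$ at $t=0$. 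With this choice, $\phi_n(s):=\mathrm{Torsion}_n(I,\gamma(s),v)=(\widetilde\theta(s,n)-\arg v)/n$ depends continuously on $s$, and $\mathrm{Linking}_n(I,x,y)=(\widetilde\psi(n)-\arg v)/n\to\alpha$.

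The central computation rests on the integral identity $f_t(y)-f_t(x)=\int_0^1 df_t(\gamma(s))\cdot v\,ds$, which writes the chord as the integral of the tangent vectors whose arguments are the $\widetilde\theta(s,t)$. The key observation is that if at every time $t\in[0,n]$ the tangent spread $\Sigma(t):=\max_s\widetilde\theta(s,t)-\min_s\widetilde\theta(s,t)$ is strictly less than $1/2$ (half a turn), then all integrands lie in a common open half-plane, hence so does their integral, and a continuity argument anchored at $t=0$ yields
\[|\widetilde\psi(n)-\widetilde\theta(0,n)|\leq\Sigma(n)<1/2,\]
equivalently $|\mathrm{Linking}_n(I,x,y)-\phi_n(0)|\leq 1/(2n)$.

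This gives a dichotomy. Case A: if $\Sigma(t)<1/2$ for every $t\geq 0$, then $\phi_n(0)\to\alpha$ and Corollary~\ref{corol1} applied to the constant sequence $x_n=x$ delivers an orbit with torsion $\geq\alpha/2$. Case B: if $\Sigma$ first reaches $1/2$ at a finite time $T_0$, then the triangle inequality produces some $s^*\in[0,1]$ with $|\widetilde\theta(s^*,T_0)-\arg v|\geq 1/4$, giving $|\phi_{T_0}(s^*)|\geq 1/(4T_0)>0$.

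The hard part will be upgrading Case B, which only produces a single time $T_0$ with one point of non-zero discrete torsion, into an infinite sequence admissible by Corollary~\ref{corol1}. My approach will be to reapply the whole construction to the shifted pairs $(f^k(x),f^k(y))$, all of which share the same linking $\alpha$, with their own straight-line segments: either the corresponding first-crossing times $T_0^{(k)}$ are unbounded, so that along a subsequence of $k$'s the Case~A argument applies up to the large time $T_0^{(k)}-1$ and produces $\mathrm{Torsion}_{T_0^{(k)}-1}\bigl(I,f^k(x),\cdot\bigr)\to\alpha$; or they remain bounded by some $M$, in which case each $k$ furnishes a point in the (bounded) orbit closure with $T_0^{(k)}$-step torsion $\geq 1/(4M)$, yielding by compactness the sequence needed. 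Making this extraction rigorous will be the sole delicate step; the remaining parts reduce to the continuity of the lift $\widetilde\theta$ and to the integral identity above.
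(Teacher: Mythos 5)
Your Case~A computation (the half-plane/convex-cone argument showing $|\widetilde\psi(n)-\widetilde\theta(0,n)|\leq\Sigma(n)$ as long as the spread stays below $1/2$) is correct and is a nice observation, but the overall strategy has a genuine gap, concentrated exactly where you locate "the sole delicate step". The fundamental problem is quantitative: when the spread first reaches $1/2$ at time $T_0$, you only extract a \emph{fixed} angular displacement of $1/4$ turn at some $s^*$, i.e.\ $|\mathrm{Torsion}_{T_0}(I,\gamma(s^*),v)|\geq 1/(4T_0)$. This bound is not proportional to $T_0\cdot\mathrm{Linking}_{T_0}(I,x,y)$, so it cannot feed Corollary~\ref{corol1}, which demands a sequence of times $n_i\to\infty$ with the $n_i$-step torsion bounded below by a \emph{fixed} $\epsilon>0$. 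Your proposed bootstrap through the shifted pairs $(f^k(x),f^k(y))$ does not repair this. In the bounded sub-case ($T_0^{(k)}\leq M$), you obtain infinitely many points with $T$-step torsion $\geq 1/(4M)$ for times $T\leq M$; since the $n_i$ do not tend to infinity, no compactness argument converts this into an asymptotic non-zero torsion (a map can perfectly well have large one-step torsion everywhere and zero torsion on every orbit). In the unbounded sub-case, the Case~A estimate applied to $(f^k(x),f^k(y))$ on the window $[0,T_0^{(k)}-1]$ compares the torsion to $\mathrm{Linking}_{T_0^{(k)}-1}(I,f^k(x),f^k(y))=\frac{1}{T}\big(\widetilde v(k+T)-\widetilde v(k)\big)$ with $T=T_0^{(k)}-1$; when $T=o(k)$ this window average need not be close to $\alpha$ (the existence of the limit $\mathrm{Linking}(I,x,y)$ controls Ces\`aro averages from time $0$, not averages over short windows far out along the orbit), so the claimed convergence $\mathrm{Torsion}_{T_0^{(k)}-1}(I,f^k(x),\cdot)\to\alpha$ is unjustified. (A secondary issue: the points $f^k(x)$ need not stay in a compact set, which Corollary~\ref{corol1} also requires.)

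For comparison, the paper proves a per-$n$ quantitative lemma (Lemma~\ref{lemma.linking-implies-torsion}): for \emph{each} $n$ with $\mathrm{Linking}_n(I,x,y)=\epsilon$, there is a point $z$ on the segment $[x,y]$ and a vector $\xi$ with $|\mathrm{Torsion}_n(I,z,\xi)|\geq \frac{\epsilon}{3}-\frac{1}{n}$, so that Corollary~\ref{corol1} applies directly with points in the fixed compact $[x,y]$. The mechanism is different from yours: instead of integrating the tangent vectors over $s$ at fixed $t$, one fixes $t=n$ and studies the \emph{simple} $C^1$ curve $s\mapsto f^n(z(s))-f^n(x)$; choosing $s_0$ to be the first parameter at which the chord direction has wound by $\frac{2n\epsilon}{3}$ confines the lifted curve to a quarter-plane, and the turning-tangents theorem for simple closed curves (after closing the curve up by controlled arcs) forces the \emph{tangent} direction, i.e.\ the torsion, to wind by $\frac{2n\epsilon}{3}$ up to an additive error of at most $\frac{3}{4}$. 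It is this use of simplicity of the image of the segment that lets the winding of the chord be transferred, with only bounded loss, to the winding of the derivative at a single parameter; your spread argument has no analogous mechanism once $\Sigma$ exceeds $1/2$.
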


\begin{rem}
Since we are dealing with diffeomorphisms with non-compact support, the linking number of a pair of orbits, as well as the torsion of an orbit, may depend on the choice of a trivialization of the tangent bundle of $\RR^2$. We recall that, all along this section, the plane $\RR^2$ is endowed with the canonical trivialization of its tangent bundle.
\end{rem}

The proof of proposition~\ref{prop.linking-implies-torsion} was sketched during a discussion between by the first author, F.~Le Roux and P.~Py several years ago. Observe proposition is intuitively obvious. Indeed, saying that the orbits of $x$ and $y$ have a non-zero linking number means that the segment line $[f_t(x),f_t(y)]$ turns with a non-zero average speed, when $t$ increases. And this should clearly imply that, at least for \emph{some} point $z$ of the segment line $[x,y]$, the image under $df_t(z)$ of the vector which is tangent to the segment $[x,y]$ must turn at a non-zero average speed, when $t$ increases. Nevertheless, turning this intuition into a formal proof appears to be quite delicate. We start by giving a quantitative version of the proposition~\ref{prop.linking-implies-torsion}~: 

\begin{lem}
\label{lemma.linking-implies-torsion}
Suppose that there exist two points $x,y\in\RR^2$ and an integer $n>0$ such that $\mathrm{Linking}_n(I,x,y)\neq 0$. Then, there exist a point $z\in\RR^2$ et a vector $\xi\in T_z \RR^2$ such that  
$$\left |\mathrm{Torsion}_n(I,z,\xi)\right |\geq \frac{1}{3} \left |\mathrm{Linking}_n(I,x,y)\right |-\frac{1}{n}.$$
\end{lem}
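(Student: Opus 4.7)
Parametrize the segment as $z_s = (1-s)x + sy$ for $s \in [0,1]$ and take $\xi = y-x$. Write $V(t,s) = df_t(z_s)(y-x)$ for the tangent vector field along the image curve $f_t([x,y])$ and $W(t) = f_t(y) - f_t(x)$ for the chord vector; the two are linked by the fundamental identity
\[
W(t) = \int_0^1 V(t,s)\,ds,
\]
obtained from $\partial_s f_t(z_s) = V(t,s)$. Choose jointly continuous real lifts $\alpha:[0,n]\to\RR$ of the angle of $W(t)$ and $\beta:[0,n]\times[0,1]\to\RR$ of the angle of $V(t,s)$, normalized so that $\alpha(0)=\beta(0,s)$ equals the angle of $y-x$ for every $s$. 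By construction,
\[
\mathrm{Linking}_n(I,x,y)=\frac{\alpha(n)-\alpha(0)}{n},\qquad \mathrm{Torsion}_n(I,z_s,y-x)=\frac{\beta(n,s)-\alpha(0)}{n}.
\]
Write $L=\mathrm{Linking}_n(I,x,y)$ and assume without loss of generality $L>0$ (the opposite sign is symmetric).

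\textbf{Trichotomy and intermediate value step.} Set $T_0=\mathrm{Torsion}_n(I,x,y-x)$ and $T_1=\mathrm{Torsion}_n(I,y,y-x)$. If $\max(T_0,T_1)\geq L/3 - 1/n$, pick $(z,\xi)$ to be $(x,y-x)$ or $(y,y-x)$ accordingly and conclude. Otherwise both $\beta(n,0),\beta(n,1)<\alpha(0)+nL/3 - 1 = \alpha(n)-2nL/3 - 1$, so both tangent-angle lifts at the endpoints sit well below $\alpha(n)$, and I would search for $s^*\in(0,1)$ with $\beta(n,s^*)\geq\alpha(n)-2nL/3 - 1$. Rewriting the fundamental identity at $t=n$ in polar form and dividing by $e^{2\pi i\alpha(n)}$ gives
\[
\int_0^1|V(n,s)|\,e^{2\pi i(\beta(n,s)-\alpha(n))}\,ds = |W(n)|>0.
\]
Taking imaginary parts, $\int|V(n,s)|\sin(2\pi(\beta(n,s)-\alpha(n)))\,ds=0$, so the continuous map $s\mapsto\sin(2\pi(\beta(n,s)-\alpha(n)))$ cannot be strictly of one sign. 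By the intermediate value theorem, there is $s^*\in[0,1]$ with $\beta(n,s^*)-\alpha(n)\in\frac{1}{2}\ZZ$: at $z_{s^*}$, at time $n$, the tangent vector of the image curve is collinear with the chord.

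\textbf{Main obstacle and conclusion.} The hard part is now to control the lift, i.e.\ to ensure the half-integer $k/2$ with $\beta(n,s^*)=\alpha(n)+k/2$ satisfies $k\geq -4nL/3-2$, which gives $\beta(n,s^*)\geq \alpha(n)-2nL/3-1$. The intuition is that $F(t,s):=\beta(t,s)-\alpha(t)$ is jointly continuous with $F(0,\cdot)\equiv 0$, so a continuous branch of the level set $F^{-1}(\frac{1}{2}\ZZ)$ originating on the top edge $\{0\}\times[0,1]$ should propagate across the rectangle $[0,n]\times[0,1]$ while, by discreteness of $\frac{1}{2}\ZZ$ and continuity in $t$, preserving its integer part---ultimately producing an $s^*$ with $\beta(n,s^*)=\alpha(n)$ and torsion exactly $L$. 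Making this rigorous is the delicate step I expect to be the main obstacle: either a continuous-selection lemma for the level set, or the following topological separation argument. Supposing for contradiction that $F(n,s)<-2nL/3 - 1$ for every $s$, for each $s$ the continuous path $t\mapsto F(t,s)$ runs from $F(0,s)=0$ down below $-1/2$, so the closed set $F^{-1}(-1/2)$ meets every vertical slice of the rectangle and thereby separates the top edge from the bottom edge; pairing this separation with the integral constraint $\int|V(t,\cdot)|\sin(2\pi F(t,\cdot))\,ds=0$ holding at every intermediate time should then force an incompatibility. Once the bound on $k$ is secured, $\mathrm{Torsion}_n(I,z_{s^*},y-x)=(\beta(n,s^*)-\alpha(0))/n \geq L/3 - 1/n$, so the pair $(z,\xi)=(z_{s^*},y-x)$ works.
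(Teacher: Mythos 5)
Your setup and the reduction are sound: choosing $\xi$ parallel to $y-x$, disposing of the easy case where an endpoint already has large torsion, and observing that it suffices to produce $s^*$ with $\beta(n,s^*)\geq\alpha(n)-\tfrac{2nL}{3}-1$ is exactly the right target (the paper does the same reduction, using only the endpoint $x$). But the heart of the lemma --- a lower bound on the \emph{lift} $\beta(n,\cdot)$ at some interior point --- is precisely the step you leave open, and the tools you propose will not close it. The identity $W(t)=\int_0^1 V(t,s)\,ds$ constrains only the unlifted vectors $V(n,s)$, so the IVT argument yields $F(n,s^*)\in\tfrac12\ZZ$ with no control whatsoever on which half-integer; the family $F(n,\cdot)$ can descend through many negative turns and come back while the vector identity remains satisfied at every $t$. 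The level-set heuristic is also flawed: a connected branch of $F^{-1}(\tfrac12\ZZ)$ starting on the edge $\{0\}\times[0,1]$ lies in $F^{-1}(0)$ by discreteness, so if it crossed the rectangle it would give torsion \emph{exactly} $L$ at some point --- a much stronger statement that is false in general (the paper itself only achieves a bound up to an error of $3/4$ of a turn, at a carefully chosen $s_0$, relative to a \emph{partial} winding). And the separation argument reduces to the trivial fact that $t\mapsto F(t,s)$ crosses $-1/2$; no contradiction with the integral constraint is in sight.

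The missing ingredients, which are the actual content of the paper's proof, are two. First, instead of the single chord $W(t)=f_t(y)-f_t(x)$, one uses the family of \emph{secants} $u(s,t)=\bigl(f_t(z(s))-f_t(x)\bigr)/\|f_t(z(s))-f_t(x)\|$, which interpolates continuously (in $s$, including $s=0$ where it degenerates to the tangent direction $df_t(x)\xi$) between the tangent winding at $x$ and the full linking number; the intermediate value theorem applied to the continuous lift $\widetilde u(\cdot,n)$, with $s_0$ taken as the \emph{first} $s$ achieving the value $\tfrac{2n\epsilon}{3}$, gives a controlled secant winding. Second --- and this is what your argument never uses --- the curve $s\mapsto f^n(z(s))$ is \emph{simple}, being the image of a segment under the diffeomorphism $f^n$; lifting it to the universal cover of the annulus obtained by blowing up $f^n(x)$, the first-hitting choice of $s_0$ confines the lifted arc to a quarter-plane, so it can be closed up into a simple closed curve and the rotation-index theorem (Umlaufsatz) converts the secant winding into a tangent winding, with a loss of at most $3/4$ of a turn. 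Injectivity of $f^n$ is essential here: for a non-simple arc there is no relation between the turning of the tangent and the winding of the secants, which is why the averaged identity $\int V\,ds=W$ alone cannot suffice.
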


Proposition~\ref{prop.linking-implies-torsion} follows from lemma~\ref{lemma.linking-implies-torsion} and corollary~\ref{corol1}; so we are left to prove lemma~\ref{lemma.linking-implies-torsion}.

\begin{proof}[Proof of lemma~\ref{lemma.linking-implies-torsion}]
Let $x,y$ be two distinct points in $\RR^2$, and $n$ be a positive integer such that $\mathrm{Linking}_n(I,x,y)\neq 0$.
Let $\epsilon:=\mathrm{Linking}_n(I,x,y)$ and assume that $\epsilon$ is positive (the proof is similar in the case where it is negative). For $s \in [0,1]$, let  $z(s)=(1-s)x+sy$. Let $\xi:=\frac{y-x}{\|y-x\|}$. Using affine structure of~$\RR^2$, we will see the vector $\xi$ as an element of $T_{z(s)}\RR^2$ for every $s\in [0,1]$.  In order to prove the lemma, we will find $s_0\in [0,1]$ such that  $\mathrm{Torsion}_n(f,z(s_0),\xi)\geq \frac{\epsilon}{3}-\frac{1}{n}$. 

If  $\mathrm{Torsion}_n(I,x,\xi)\geq \frac{\epsilon}{3}$, then we take $s_0=0$ and we are done. Hence, in the remainder of the proof, we will assume that 
\begin{equation}
\label{e.hypothese-torsion-x}
\mathrm{Torsion}_n(I,x,\xi) \leq  \frac{\epsilon}{3}.
\end{equation}

Using the affine structure of $\RR^2$, we can define a map 
\begin{eqnarray*}
u \; : \; [0,1]\times\RR & \longrightarrow &\SS^1  \\
 (s,t) & \longmapsto & \frac{f_t(z(s))-f_t(x)}{\|f_t(z(s))-f_t(x)\|} \quad \mbox{ if } s>0, \\
 (0,t) & \longmapsto & \frac{df_t(z(s)).\xi}{\|df_t(z(s))\xi\|}.
\end{eqnarray*}
This map is continuous since $f_t$ is assumed to be $C^1$ and to depend continuously on $t$. So we may consider a  continuous lift $\widetilde u : [0,1]\times\RR\rightarrow\RR$ of the map $u$. This lift $\widetilde u $ is well defined up to the addition of an integer. 

\begin{claim} 
\label{c.1}
One has $\widetilde u(1,n)-\widetilde u(0,n) \geq \frac{2n\epsilon}{3}$.
\end{claim}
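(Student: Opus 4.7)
The strategy is to evaluate $\widetilde u(1,n)-\widetilde u(0,n)$ by ``going around'' three sides of the rectangle $[0,1]\times[0,n]$, writing
\[
\widetilde u(1,n)-\widetilde u(0,n)=\bigl[\widetilde u(1,n)-\widetilde u(1,0)\bigr]+\bigl[\widetilde u(1,0)-\widetilde u(0,0)\bigr]+\bigl[\widetilde u(0,0)-\widetilde u(0,n)\bigr].
\]
Each of the three bracketed terms should then be identified with either the linking number, the torsion along the orbit of $x$, or $0$, and the desired inequality will follow from the standing assumption \eqref{e.hypothese-torsion-x}.

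For the right vertical side, the definition of $u(1,t)$ gives
$$u(1,t)=\frac{f_t(y)-f_t(x)}{\|f_t(y)-f_t(x)\|},$$
which differs from $v(I,x,y)(t)$ only by a half-turn. Since this half-turn is independent of $t$, any two continuous lifts of $u(1,\cdot)$ and of $v(I,x,y)$ differ by an additive constant, so $\widetilde u(1,n)-\widetilde u(1,0)=n\,\mathrm{Linking}_n(I,x,y)=n\epsilon$. For the left vertical side, $z(0)=x$ forces $u(0,t)=v(I,x,\xi)(t)$, so $\widetilde u(0,n)-\widetilde u(0,0)=n\,\mathrm{Torsion}_n(I,x,\xi)$. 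Finally, the bottom horizontal side is trivial because $f_0=\mathrm{Id}$: for $s>0$ one computes $u(s,0)=\frac{z(s)-x}{\|z(s)-x\|}=\xi$, and by the definition of $u(0,0)$ this extends to $s=0$ as well. Hence $s\mapsto u(s,0)$ is constant equal to $\xi$, so by continuity the lift $s\mapsto\widetilde u(s,0)$ is constant on $[0,1]$, and $\widetilde u(1,0)-\widetilde u(0,0)=0$.

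Plugging these three evaluations into the telescoping decomposition, together with the hypothesis $\mathrm{Torsion}_n(I,x,\xi)\leq\epsilon/3$, yields
\[
\widetilde u(1,n)-\widetilde u(0,n)=n\epsilon+0-n\,\mathrm{Torsion}_n(I,x,\xi)\;\geq\;n\epsilon-\tfrac{n\epsilon}{3}\;=\;\tfrac{2n\epsilon}{3},
\]
which is exactly the content of the claim. The proof is essentially bookkeeping once the rectangle decomposition is in place; the only genuinely delicate point is to verify that the half-turn sign discrepancy between $u(1,\cdot)$ and $v(I,x,y)$ is time-independent (so that it cancels in differences of lifts) and that the map $u$ is really continuous across $s=0$, so that a single continuous lift $\widetilde u$ on $[0,1]\times\RR$ exists and the three pieces above can be combined inside one and the same lift.
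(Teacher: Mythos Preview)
Your proof is correct and follows essentially the same approach as the paper: the same telescoping decomposition around three sides of the rectangle $[0,1]\times[0,n]$, the same identification of each piece with $n\,\mathrm{Linking}_n(I,x,y)$, $0$, and $-n\,\mathrm{Torsion}_n(I,x,\xi)$, and the same use of hypothesis~\eqref{e.hypothese-torsion-x}. If anything, you are slightly more careful than the paper in noting the constant half-turn discrepancy between $u(1,\cdot)$ and $v(I,x,y)$ and in flagging the continuity of $u$ at $s=0$.
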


\begin{proof}
Observe that $\widetilde u(1,n)-\widetilde u(1,0)= n.\mathrm{Linking}_n(I,x,y)$ and $\widetilde u(0,n)-\widetilde u(0,0)=n.\mathrm{Torsion}_n(I,x,\xi)$. Also observe that $\widetilde u(1,0)-\widetilde u(0,0)=0$ since  $u(s,0)=\xi$ for any $s\in [0,1]$. Now write
\begin{eqnarray*}
\widetilde u(1,n)-\widetilde u(0,n) & = & 
\Big(\widetilde u(1,n)-\widetilde u(1,0)\Big) + \Big(\widetilde u(1,0)-\widetilde u(0,0)\Big) + \Big(\widetilde u(0,0)-\widetilde u(0,n)\Big).\\
& = & n.\mathrm{Linking}_n(I,x,y) - n.\mathrm{Torsion}_n(I,x,\xi)
\end{eqnarray*}
Recall that $\epsilon$ is by definition equal to $\mathrm{Linking}_n(I,x,y)$. Also recall that we are assuming that $\mathrm{Torsion}_n(I,x,\xi)$ is smaller than $\epsilon/3$ (hypothesis~\eqref{e.hypothese-torsion-x}). The claim follows. 
\end{proof} 

Now, we consider 
$$s_0:=\inf\left\{s\in [0,1] \mbox{ such that } \widetilde u(s,n)-\widetilde u(0,n) \geq \frac{2n\epsilon}{3}\right\}.$$
Claim~1 shows that $s_0$ is well-defined, and hypothesis~\eqref{e.hypothese-torsion-x} shows that $s_0$ is positive.\
Now, we introduce the continuous map
$$\begin{array}{ccrcll}
v & : & [0,1]\times\RR & \rightarrow & \SS^1 & \\
&& (s,t) & \mapsto & \frac{df_t(z(s)).\xi}{\|df_t(z(s).\xi\|}.
\end{array}$$
It is lift to a continuous map $\widetilde v : [0,1]\times\RR\rightarrow\RR$  satisfying $\pi\circ\widetilde v= v$, well defined by addition of an integer.

\begin{claim}
\label{c.2}
One has $\widetilde v(s_0,n)-\widetilde v(0,n)\geq \frac{2n\epsilon}{3} - \frac{3}{4}$.
\end{claim}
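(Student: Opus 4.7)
The plan is to derive Claim~2 from a one-sided comparison between $\widetilde u(s_0,n)$ and $\widetilde v(s_0,n)$. First I establish two preliminary identities that reduce the problem. Because $u(0,\cdot) \equiv v(0,\cdot)$ and the lifts $\widetilde u$ and $\widetilde v$ are chosen to agree at $(0,0)$, we have $\widetilde u(0,\cdot) \equiv \widetilde v(0,\cdot)$ and in particular $\widetilde u(0,n) = \widetilde v(0,n)$. Moreover, by continuity of $\widetilde u(\cdot, n)$, the infimum definition of $s_0$ combined with Claim~1 forces $\widetilde u(s_0,n) - \widetilde u(0,n) = \frac{2n\epsilon}{3}$ (not just $\geq$). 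Combining these, Claim~2 is equivalent to the inequality
$$\widetilde u(s_0,n) - \widetilde v(s_0,n) \leq \tfrac{3}{4}.$$

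To prove this bound, I realise $\widetilde v(s_0,n)$ as a limit of linking-type data. For $s \in [0, s_0)$, define
$$w(s,t) := \frac{f_t(z(s_0)) - f_t(z(s))}{\|f_t(z(s_0)) - f_t(z(s))\|} \in \SS^1,$$
together with its continuous lift $\widetilde w(s,t)$ normalised so that $\widetilde w(s,0) = \widetilde v(s_0,0)$ (which is legitimate since $z(s_0) - z(s)$ is a positive multiple of $\xi$). A first-order Taylor expansion of $f_t$ along the segment $[z(s), z(s_0)]$ yields $w(s,t) \to v(s_0,t)$ uniformly for $t \in [0,n]$ as $s \to s_0^-$, whence $\widetilde w(s,n) \to \widetilde v(s_0,n)$. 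It therefore suffices to establish the estimate $\widetilde u(s_0,n) - \widetilde w(s,n) \leq \tfrac{3}{4}$ for every $s$ sufficiently close to $s_0^-$.

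The key tool is a triangle-type estimate applied to the three points $x, z(s), z(s_0)$, which at $t=0$ are collinear with $z(s)$ strictly between $x$ and $z(s_0)$. Set $D(t) := \widetilde u(s_0,t) - \widetilde w(s,t)$; this is continuous on $[0,n]$ with $D(0) = 0$, and its reduction modulo $1$ is the signed angle, at the vertex $f_t(z(s_0))$, between the rays to $f_t(z(s))$ and $f_t(x)$. This angle belongs to $(-1/2, 1/2)$ whenever the three iterates are non-collinear, so in the generic situation where no degeneracy occurs on $[0,n]$, continuity gives $|D(t)| < 1/2$ and the desired estimate follows with the cleaner constant $1/2$. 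The main work is handling the non-generic crossings of the collinear (degenerate) locus, where $D$ can pick up integer shifts in the lift; the defining property of $s_0$ (namely $\widetilde u(s',n) < \widetilde u(0,n) + \frac{2n\epsilon}{3}$ for all $s' < s_0$) constrains the sign of these shifts and allows one to conclude $D(n) \leq \tfrac{3}{4}$ in all cases.

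Passing to the limit $s \to s_0^-$ then gives $\widetilde u(s_0,n) - \widetilde v(s_0,n) \leq \tfrac{3}{4}$, and Claim~2 follows from the reduction in the opening paragraph. The principal obstacle is the careful bookkeeping of $D$ across the degenerate collinear configurations: the clean triangle argument yields the tight constant $\tfrac{1}{2}$ generically, while the weaker value $\tfrac{3}{4}$ absorbs the half-turn contributions that can accumulate when the three points $f_t(x), f_t(z(s)), f_t(z(s_0))$ pass through collinearity.
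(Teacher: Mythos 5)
Your opening reduction is correct and matches a step the paper also uses implicitly: since $u(0,\cdot)\equiv v(0,\cdot)$ one may take $\widetilde u(0,n)=\widetilde v(0,n)$, and minimality of $s_0$ gives $\widetilde u(s_0,n)-\widetilde u(0,n)=\frac{2n\epsilon}{3}$ exactly, so the claim is equivalent to $\widetilde u(s_0,n)-\widetilde v(s_0,n)\leq \frac34$. The limit $\widetilde w(s,n)\to\widetilde v(s_0,n)$ as $s\to s_0^-$ is also fine. But the heart of the matter --- the bound $D(n)=\widetilde u(s_0,n)-\widetilde w(s,n)\leq\frac34$ --- is asserted rather than proved, and the mechanism you propose for it does not plausibly work. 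The collinearity crossings of the triple $f_t(x), f_t(z(s)), f_t(z(s_0))$ as $t$ runs over $[0,n]$ are not ``non-generic'': for fixed $s$ they form a typically non-empty set of times, unbounded in number as $n$ grows, and each crossing can push the lift $D$ into the adjacent half-integer interval, so a priori $D(n)$ can drift arbitrarily far from $0$. Your only proposed control is ``the defining property of $s_0$ constrains the sign of these shifts,'' but that property ($\widetilde u(s',n)<\widetilde u(0,n)+\frac{2n\epsilon}{3}$ for $s'<s_0$) is a statement about the configuration at the single time $t=n$ as $s'$ varies; it says nothing about the intermediate times $t<n$ at which the collinearity degeneracies occur. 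Without an actual argument bounding the net contribution of these crossings, the proof does not close, and indeed your own closing sentence concedes that the contributions ``can accumulate.''

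The paper's proof sidesteps this entirely by working in the $s$-variable at the frozen time $t=n$: the curve $\alpha(s)=f^n(z(s))-f^n(x)$, $s\in[0,s_0]$, has $u(s,n)$ as its normalized position vector and $v(s,n)$ as its normalized tangent vector, it is \emph{simple} (being the diffeomorphic image of a segment), and by minimality of $s_0$ its lift to the universal cover of the blown-up plane is confined to a quarter-plane. Closing it up with controlled arcs and applying the turning-number theorem for simple closed curves converts the winding of the position vector into the winding of the tangent vector up to the error $[-\frac34,+\frac14]$. The two ingredients your approach lacks --- simplicity of a curve and a confinement that prevents spiraling --- are exactly what replaces the intractable bookkeeping of collinearity crossings. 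As written, your proposal reformulates the claim (as a comparison between a macroscopic linking number and an infinitesimal one) but does not prove it.
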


\begin{proof}
We consider the curve $\alpha:[0,s_0]\to \RR^2$ defined by $\alpha(s):=f^n(z(s))-f^n(x)$. We observe that, for every $s\in [0,s_0]$, 
$$v(s,n)=\frac{\frac{d\alpha}{ds}(s)}{\left\|\frac{d\alpha}{ds}(s)\right\|}\quad\mbox{and}\quad u(s,n)=\frac{\alpha(s)}{\|\alpha(s)\|}.$$
So, the quantity $\widetilde v(s_0,n)-\widetilde v(0,n)$ is the number of turns made by the tangent vector of the curve $\alpha$, and the quantity $\widetilde u(s_0,n)-\widetilde u(0,n)$ is the number of turns made by the curve $\alpha$ around the origin of $\RR^2$. To compare these two quantities, we will construct a simple closed curve (by concatenating a lift of $\alpha$ with a curve made of a controlled number of segments and circle arcs), and use the fact that the tangent vector of a simple closed curve makes $+1$ turn or $-1$ turn, when one goes around this curve once.

We introduce the annulus $\AA:=[0,+\infty)\times \SS^1$ obtained by blowing up the origin of  $\RR^2$. We will denote by $p:\AA\to\RR^2$ the natural projection given by  $p(r,e^{2i\pi\theta})=re^{2i\pi\theta}$. The strip $\widetilde\AA:=[0,\infty)\times\RR$ will be seen as the universal covering of $\AA$, the covering map  $\Pi:\widetilde\AA\to\AA$ being given by $\Pi(r,\theta)=(r,\pi(\theta))=(r,e^{2i\pi\theta})$. We will denote by $p_\theta:\widetilde\AA\to\RR$ the projection on the second coordinate. The inclusion of $\widetilde\AA=[0,+\infty)\times\RR$ in the affine space $\RR\times\RR$ provides a natural trivialization of the tangent bundle of $\widetilde\AA$.

\begin{rem}
\label{r.difference-trivializations}
Caution~! This trivialization of the tangent bundle of $\widetilde\AA$ is not the same that the one obtained, by pulling back by $p\circ \Pi:\widetilde\AA\to\RR^2$ the natural trivialization of the tangent bundle of the affine plane $\RR^2$, where the image of the curve $\alpha$ lies. More precisely, for $(r,\theta)\in\widetilde\AA$, using the trivializations of the tangent bundles of $\widetilde\AA$ and $\RR^2$, we can consider the differential map $d(p\circ \Pi)(r,\theta)$ as a linear map from $\RR^2$ to $\RR^2$; this linear map is \emph{not} the identity; it is a rotation of angle $\theta$ (recall that we use the full turn as a unit for angles).
\end{rem}

\begin{figure}[ht]
\centerline{\ifx\JPicScale\undefined\def\JPicScale{1}\fi
\psset{unit=\JPicScale mm}
\psset{linewidth=0.3,dotsep=1,hatchwidth=0.3,hatchsep=1.5,shadowsize=1,dimen=middle}
\psset{dotsize=0.7 2.5,dotscale=1 1,fillcolor=black}
\psset{arrowsize=1 2,arrowlength=1,arrowinset=0.25,tbarsize=0.7 5,bracketlength=0.15,rbracketlength=0.15}
\begin{pspicture}(0,0)(118.77,94.26)
\pspolygon[linewidth=0.1,linestyle=none,fillcolor=lightgray,fillstyle=solid](84,8)(84,78)(108,78)(108,8)
\pspolygon[linewidth=0.1,linecolor=yellow,linestyle=none,fillcolor=lightgray,fillstyle=solid](8,52)(52,52)(52,8)(8,8)
\rput{0}(30,30){\psellipse[linewidth=0.1,linestyle=dashed,dash=1 1](0,0)(10,10)}
\psline[linewidth=0.1,linecolor=blue,fillcolor=cyan,fillstyle=solid]{->}(40,30)(44,30)
\psline[linewidth=0.1,linecolor=blue,fillcolor=cyan,fillstyle=solid]{->>}(40,30)(40,34)
\psline[linewidth=0.1,linecolor=green,fillcolor=cyan,fillstyle=solid]{->}(30,40)(30,44)
\psline[linewidth=0.1,linecolor=green,fillcolor=cyan,fillstyle=solid]{->>}(30,40)(26,40)
\psline[linewidth=0.1,linecolor=red,fillcolor=cyan,fillstyle=solid]{->>}(20,30)(20,26)
\psline[linewidth=0.1,linecolor=red,fillcolor=cyan,fillstyle=solid]{->}(20,30)(16,30)
\psline[linewidth=0.1,linecolor=yellow,fillcolor=cyan,fillstyle=solid]{->}(30,20)(30,16)
\psline[linewidth=0.1,linecolor=yellow,fillcolor=cyan,fillstyle=solid]{->>}(30,20)(34,20)
\psline[linewidth=0.25,fillcolor=cyan,fillstyle=solid]{->}(78,30)(58,30)
\psline[linewidth=0.1,fillcolor=cyan,fillstyle=solid]{->}(84,8)(84,80)
\psline[linewidth=0.1,fillcolor=cyan,fillstyle=solid]{->}(83,24)(110,24)
\rput{0}(30,30){\psellipse[linewidth=0.1,fillstyle=solid](0,0)(0.44,0.44)}
\psline[linewidth=0.1,linestyle=dashed,dash=1 1,fillcolor=cyan,fillstyle=solid](94,8)(94,78)
\psline[linewidth=0.1,linecolor=blue,fillcolor=cyan,fillstyle=solid]{->}(94,72)(98,72)
\psline[linewidth=0.1,linecolor=blue,fillcolor=cyan,fillstyle=solid]{->>}(94,24)(94,28)
\psline[linewidth=0.1,linecolor=green,fillcolor=cyan,fillstyle=solid]{->>}(94,36)(94,40)
\psline[linewidth=0.1,linecolor=green,fillcolor=cyan,fillstyle=solid]{->}(94,36)(98,36)
\psline[linewidth=0.1,linecolor=red,fillcolor=cyan,fillstyle=solid]{->>}(94,48)(94,52)
\psline[linewidth=0.1,linecolor=red,fillcolor=cyan,fillstyle=solid]{->}(94,48)(98,48)
\psline[linewidth=0.1,linecolor=yellow,fillcolor=cyan,fillstyle=solid]{->>}(94,60)(94,64)
\psline[linewidth=0.1,linecolor=yellow,fillcolor=cyan,fillstyle=solid]{->}(94,60)(98,60)
\rput[bl](110,25){$r$}
\rput[bl](63,31){$p\circ\Pi$}
\rput[bl](80,23){$0$}
\rput[bl](77,10){$-\frac{1}{4}$}
\rput[bl](117.77,54.26){}
\psline[linewidth=0.1,fillcolor=cyan,fillstyle=solid](83,12)(85,12)
\psline[linewidth=0.1,fillcolor=cyan,fillstyle=solid](83,36)(85,36)
\rput[bl](79,34){$\frac{1}{4}$}
\rput[bl](118.77,94.26){}
\rput[bl](47,48){$\mathbb{R}^2$}
\rput[bl](50,49){}
\rput[bl](105,74){$\widetilde{\mathbb{A}}$}
\psline[linewidth=0.1,fillcolor=cyan,fillstyle=solid](83,48)(85,48)
\psline[linewidth=0.1,fillcolor=cyan,fillstyle=solid](83,72)(85,72)
\psline[linewidth=0.1,fillcolor=cyan,fillstyle=solid](83,60)(85,60)
\rput[bl](79,58){$\frac{3}{4}$}
\rput[bl](80,71){$1$}
\rput[bl](84,80){$\theta$}
\psline[linewidth=0.1,linecolor=yellow,fillcolor=cyan,fillstyle=solid]{->>}(94,12)(94,16)
\psline[linewidth=0.1,linecolor=yellow,fillcolor=cyan,fillstyle=solid]{->}(94,12)(98,12)
\psline[linewidth=0.1,linecolor=blue,fillcolor=cyan,fillstyle=solid]{->}(94,24)(98,24)
\psline[linewidth=0.1,linecolor=blue,fillcolor=cyan,fillstyle=solid]{->>}(94,72)(94,76)
\end{pspicture}}
\caption{\label{f.difference-trivializations}The trivializations of the tangent bundles of $\RR^2$ and $\widetilde{\mathbb{A}}$.}
\end{figure}
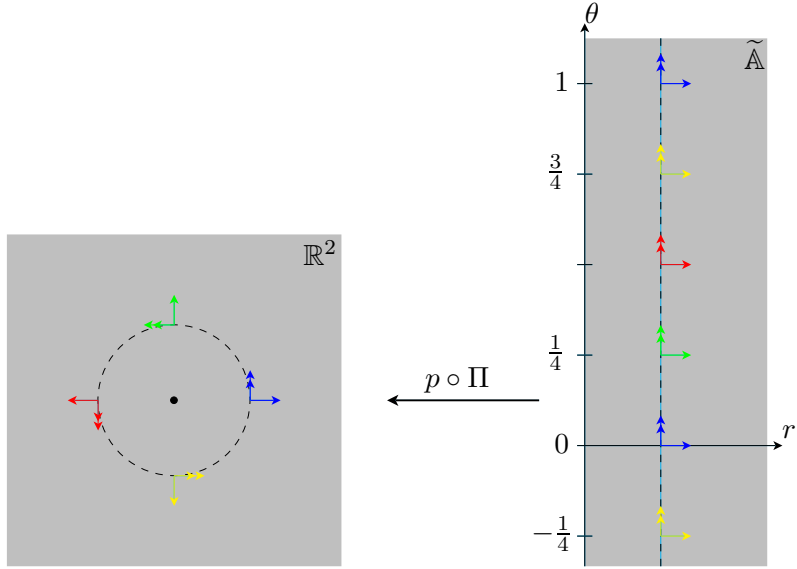

The curve $\alpha:[0,s_0]\to\RR^2$ can be lifted to a (uniquely defined) continuous curve  $\widehat\alpha:[0,s_0]\to\AA$, satisfying $p\circ \widehat\alpha=\alpha$. This curve $\widehat\alpha$ is given by the  formula $\widehat\alpha(s)=(\|f_n(z(s)-f_n(x)\|,u(s,n))$. Then the curve $\widehat\alpha:[0,s_0]\to\AA$ can be lifted to a continuous curve  $\widetilde\alpha:[0,s_0]\to \widetilde\AA$, satisfying $\Pi\circ \widetilde\alpha=\alpha$, given by the formula $\widetilde\alpha(s)=(\|f_n(z(s)-f_n(x)\|,\widetilde u(s,n))$. Hence, one has $p_\theta(\widetilde\alpha(s))=\widetilde u(s,n)$ for any $s\in [0,1]$. 

Remark that $\alpha$ is a simple $C^1$ curve; indeed, up to a translation, it is the image  of the segment line $s\mapsto z(s)$ under the diffeomorphism $f_n$. So, $\widetilde \alpha$ is a  simple $C^1$ curve as well. On the other hand, the definition of $s_0$ and the relation $p_\theta\circ \widetilde\alpha(s)=\widetilde u(s,n)$ imply that we have  
$$p_\theta(\widetilde\alpha(s_0))=p_\theta(\widetilde\alpha(0))+\frac{2n\epsilon}{3}\quad\quad\mbox{et}\quad\quad p_\theta(\widetilde\alpha(s))<p_\theta(\widetilde\alpha(0))+\frac{2n\epsilon}{3}\mbox{ for }s\in [0,s_0).$$
Hence, the curve $\widetilde \alpha$ is contained in the quarter of the plane 
$$Q=\{(r,\theta) \mid r \geq 0 \mbox{ et } \theta \leq p_\theta(\widetilde\alpha(0))+ \frac{2n\epsilon}{3}\},$$ 
and joins the point $\widetilde\alpha(0)$, which is situated on the  horizontal part of the boundary of $Q$, to the point $\widetilde\alpha(s_0)$ which is situated on the vertical part of the boundary of $Q$. It is so easy to construct a  simple arc  $\widetilde\beta$ in $\RR^2\setminus \mathrm{int}(Q)$ joining the extremities of $\widetilde\alpha$, such that $\widetilde\alpha\cup\widetilde\beta$ is a $C^1$ negatively orientated simple closed curve, and such that the tangent vector of $\widetilde\beta$ makes between $-\frac{5}{4}$ turns and $-\frac{1}{4}$ turns, when we cover $\widetilde\beta$. We can, for example, construct $\widetilde\beta$ as union of three circle arcs and two segment lines (see figure~\ref{f.getting-a-simple-closed-curve}). As $\widetilde\alpha\cup\widetilde\beta$ is a $C^1$ negatively orientated simple closed curve, the tangent vector of this curve makes  exactly $-1$ turn when one runs around the curve. Thus, the tangent vector of $\widetilde\alpha$ makes between $-\frac{3}{4}$ turn and $+\frac{1}{4}$ turn, when the parameter $s$ runs from $0$ to $s_0$. According to the remark~\ref{r.difference-trivializations}, and since  $p_\theta\circ\widetilde\alpha(s_0)-p_\theta\circ\widetilde\alpha(0)= \frac{2n\epsilon}{3}$, this means that, for the canonical trivialization  of the tangent bundle of $\RR^2$, the tangent vector of $\alpha=p\circ\Pi(\widetilde\alpha)$ makes between $\frac{2n\epsilon}{3}-\frac{3}{4}$ turns and $\frac{2n\epsilon}{3}+\frac{1}{4}$ turns, when $s$ runs from $0$ to $s_0$. . Recall, now, that this number of turns is equal to $\widetilde v(s_0,n)-\widetilde v(0,n)$. This completes the proof of claim~2.
\end{proof}

\begin{figure}[ht]
\label{f.getting-a-simple-closed-curve}
\centerline{\ifx\JPicScale\undefined\def\JPicScale{1}\fi
\psset{unit=\JPicScale mm}
\psset{linewidth=0.3,dotsep=1,hatchwidth=0.3,hatchsep=1.5,shadowsize=1,dimen=middle}
\psset{dotsize=0.7 2.5,dotscale=1 1,fillcolor=black}
\psset{arrowsize=1 2,arrowlength=1,arrowinset=0.25,tbarsize=0.7 5,bracketlength=0.15,rbracketlength=0.15}
\begin{pspicture}(0,0)(86,146.56)
\pspolygon[linewidth=0,linestyle=none,fillcolor=lightgray,fillstyle=solid](70.47,128.07)(70.94,3.13)(22,3)(21.53,127.93)
\psline[linewidth=0.2]{<-}(21.82,146.56)(21.82,2.8)
\psline[linewidth=0.2]{->}(22,32)(85,32)
\pscustom[linewidth=0.2,linecolor=red]{\psbezier(43.96,127.76)(43.96,125.89)(43.43,123.9)(42.18,121.15)
\psbezier(40.93,118.4)(41.09,117.81)(42.71,119.19)
\psbezier(44.34,120.56)(45.89,119.57)(47.89,115.89)
\psbezier(49.89,112.2)(48.51,111.55)(43.29,113.72)
\psbezier(38.06,115.89)(35.97,116.22)(36.32,114.82)
\psbezier(36.67,113.42)(37.78,112.59)(40.02,112.05)
\psbezier(42.26,111.5)(43.73,110.15)(44.92,107.52)
\psbezier(46.1,104.9)(46.26,102.86)(45.45,100.74)
\psbezier(44.64,98.61)(43.76,97.03)(42.51,95.47)
\psbezier(41.26,93.91)(39.7,93.16)(37.33,92.97)
\psbezier(34.96,92.78)(33.7,93.77)(33.14,96.27)
\psbezier(32.57,98.77)(33.14,100.54)(35.01,102.17)
\psbezier(36.89,103.79)(38.07,104.04)(38.94,102.97)
\psbezier(39.82,101.91)(39.9,101.08)(39.21,100.2)
\psbezier(38.52,99.33)(37.79,99.11)(36.79,99.49)
\psbezier(35.79,99.86)(35.36,99.3)(35.36,97.61)
\psbezier(35.36,95.92)(36.22,95.58)(38.22,96.45)
\psbezier(40.22,97.33)(41.38,98.45)(42.06,100.2)
\psbezier(42.75,101.95)(41.89,103.69)(39.2,106.01)
\psbezier(36.51,108.32)(34.32,107.78)(31.88,104.22)
\psbezier(29.45,100.66)(29.07,97.36)(30.63,93.24)
\psbezier(32.19,89.11)(34.74,87.9)(39.11,89.22)
\psbezier(43.49,90.53)(45.76,90.1)(46.7,87.79)
\psbezier(47.64,85.48)(48.07,83.96)(48.13,82.71)
\psbezier(48.19,81.46)(47.31,78.35)(45.19,72.35)
\psbezier(43.06,66.34)(43.22,62.89)(45.72,60.83)
\psbezier(48.22,58.76)(50.34,58.36)(52.78,59.49)
\psbezier(55.22,60.61)(56.66,62.05)(57.6,64.28)
\psbezier(58.54,66.51)(58.84,68.11)(58.61,69.64)
\psbezier(58.38,71.16)(58.14,72.07)(57.81,72.66)
\psbezier(57.47,73.26)(56.98,73.63)(56.17,73.91)
\psbezier(55.36,74.18)(54.6,74.29)(53.63,74.26)
\psbezier(52.66,74.23)(51.97,74.05)(51.34,73.64)
\psbezier(50.71,73.23)(50.12,72.3)(49.38,70.56)
\psbezier(48.63,68.81)(49.01,67.51)(50.63,66.23)
\psbezier(52.26,64.95)(53.16,64.86)(53.66,65.93)
\psbezier(54.15,67)(54.08,67.66)(53.44,68.12)
\psbezier(52.79,68.58)(52.32,68.67)(51.89,68.42)
\psbezier(51.45,68.17)(51.09,68.24)(50.68,68.64)
\psbezier(50.28,69.05)(50.25,69.42)(50.6,69.9)
\psbezier(50.94,70.37)(51.58,70.57)(52.73,70.57)
\psbezier(53.87,70.57)(54.66,70.1)(55.34,69.02)
\psbezier(56.03,67.93)(56.12,66.99)(55.67,65.91)
\psbezier(55.21,64.82)(54.82,64.07)(54.36,63.42)
\psbezier(53.91,62.77)(53.12,62.4)(51.75,62.18)
\psbezier(50.37,61.96)(49.39,61.96)(48.48,62.18)
\psbezier(47.56,62.4)(47.07,62.96)(46.85,64.05)
\psbezier(46.62,65.13)(46.52,66.07)(46.52,67.15)
\psbezier(46.52,68.24)(46.72,69.45)(47.17,71.19)
\psbezier(47.63,72.93)(48.41,74.15)(49.79,75.23)
\psbezier(51.16,76.32)(53.02,76.6)(56,76.16)
\psbezier(58.97,75.73)(60.54,74.98)(61.23,73.68)
\psbezier(61.92,72.37)(62.41,70.69)(62.86,68.08)
\psbezier(63.32,65.47)(63.02,63.7)(61.88,62.18)
\psbezier(60.73,60.65)(59.26,59.26)(56.98,57.52)
\psbezier(54.69,55.78)(53.12,55.03)(51.75,55.03)
\psbezier(50.37,55.03)(49.1,54.94)(47.5,54.72)
\psbezier(45.9,54.5)(44.52,53.76)(42.92,52.24)
\psbezier(41.32,50.71)(40.14,48.48)(39,44.78)
\psbezier(37.85,41.08)(37.95,38.65)(39.32,36.7)
\psbezier(40.69,34.74)(42.16,33.72)(44.23,33.28)
\psbezier(46.29,32.85)(47.86,32.75)(49.46,32.97)
\psbezier(51.06,33.19)(52.24,33.47)(53.38,33.9)
\psbezier(54.52,34.34)(55.6,35.08)(56.98,36.39)
\psbezier(58.35,37.69)(59.14,39.37)(59.59,41.98)
\psbezier(60.05,44.59)(59.16,46.36)(56.65,47.89)
\psbezier(54.13,49.41)(52.27,49.69)(50.44,48.82)
\psbezier(48.61,47.94)(47.23,46.92)(45.86,45.4)
\psbezier(44.49,43.87)(44.1,42.48)(44.56,40.74)
\psbezier(45.01,39)(45.9,38.16)(47.5,37.94)
\psbezier(49.1,37.72)(50.27,37.91)(51.42,38.56)
\psbezier(52.56,39.21)(53.15,40.05)(53.38,41.36)
\psbezier(53.61,42.66)(53.42,43.41)(52.73,43.85)
\psbezier(52.05,44.28)(51.46,44.19)(50.77,43.54)
\psbezier(50.08,42.88)(49.98,42.42)(50.44,41.98)
\psbezier(50.89,41.55)(50.89,41.08)(50.44,40.43)
\psbezier(49.98,39.77)(49.49,39.68)(48.8,40.12)
\psbezier(48.12,40.55)(47.82,41.02)(47.82,41.67)
\psbezier(47.82,42.32)(47.92,42.88)(48.15,43.54)
\psbezier(48.38,44.19)(48.87,44.66)(49.79,45.09)
\psbezier(50.7,45.53)(51.68,45.62)(53.05,45.4)
\psbezier(54.42,45.18)(55.11,44.62)(55.34,43.54)
\psbezier(55.57,42.45)(55.57,41.51)(55.34,40.43)
\psbezier(55.11,39.34)(54.72,38.69)(54.03,38.25)
\psbezier(53.35,37.82)(52.27,37.35)(50.44,36.7)
\psbezier(48.61,36.04)(47.43,35.95)(46.52,36.39)
\psbezier(45.6,36.82)(44.92,37.2)(44.23,37.63)
\psbezier(43.55,38.07)(43.15,39.18)(42.92,41.36)
\psbezier(42.69,43.54)(42.89,45.03)(43.57,46.33)
\psbezier(44.26,47.63)(45.24,48.66)(46.84,49.75)
\psbezier(48.44,50.83)(49.82,51.49)(51.42,51.93)
\psbezier(53.02,52.36)(54.69,52.64)(56.98,52.86)
\psbezier(59.26,53.08)(60.93,52.52)(62.53,51)
\psbezier(64.13,49.47)(65.02,47.89)(65.48,45.71)
\psbezier(65.93,43.53)(66.13,41.76)(66.13,39.81)
\psbezier(66.13,37.85)(65.74,36.45)(64.82,35.15)
\psbezier(63.9,33.84)(62.53,32.81)(60.25,31.73)
\psbezier(57.96,30.64)(56,30.17)(53.71,30.17)
\psbezier(51.42,30.17)(49.95,30.17)(48.81,30.17)
\psbezier(47.66,30.17)(46.58,29.99)(45.21,29.55)
\psbezier(43.84,29.12)(43.35,28.28)(43.58,26.76)
\psbezier(43.8,25.23)(44.29,24.4)(45.21,23.96)
\psbezier(46.13,23.53)(46.82,23.62)(47.5,24.27)
\psbezier(48.19,24.92)(48.19,25.39)(47.5,25.82)
\psbezier(46.82,26.26)(46.82,26.72)(47.5,27.38)
\psbezier(48.19,28.03)(48.97,27.85)(50.1,26.77)
\psbezier(51.23,25.68)(51.71,24.17)(51.71,21.72)
\psbezier(51.71,19.27)(48.43,20.11)(40.77,24.52)
\psbezier(33.1,28.93)(30.55,29.38)(32.27,26.02)
\psbezier(33.98,22.66)(34.43,20.41)(33.77,18.52)
\psbezier(33.1,16.63)(31.97,16.73)(30,18.84)
\psbezier(28.02,20.96)(25.57,21.25)(21.82,19.82)
}
\psline[linewidth=0.2,linestyle=dashed,dash=1 1](21,128)(74.39,128)
\psline[linewidth=0.2,linecolor=red,linestyle=dashed,dash=1 1](38,133.77)(20.07,133.68)
\rput{19.52}(19.5,127.23){\psellipticarc[linewidth=0.2,linecolor=red,linestyle=dashed,dash=1 1](0,0)(6.61,6.34){71.22}{160.53}}
\psline[linewidth=0.2,linecolor=red,linestyle=dashed,dash=1 1](12.71,126.55)(13.04,28.04)
\psbezier[linewidth=0.2,linecolor=red,linestyle=dotted](13.73,23.31)(13.73,23.31)(13.73,23.31)(13.73,23.31)
\rput{7.46}(22,27.23){\psellipticarc[linewidth=0.2,linecolor=red,linestyle=dashed,dash=1 1](0,0)(8.67,-7.56){97.48}{185.97}}
\rput[bl](86,33){$r$}
\rput[bl](23.82,146.18){$\theta$}
\rput[bl](85,126){$\theta=p_\theta(\widetilde\alpha(0))+\frac{2n\epsilon}{3}$}
\rput[bl](77,58.52){$\widetilde\alpha$}
\pscustom[linewidth=0]{\psline{<-}(21.28,18.96)(17.71,14.32)
\psline(17.71,14.32)(12.36,12.23)
\psbezier{-}(12.36,12.23)(12.36,12.23)(12.36,12.23)
}
\psline[linewidth=0]{<-}(44.68,129.01)
(52.36,135.62)(58.79,136.51)
\rput[bl](59.98,135.32){$\widetilde\alpha(s_0)$}
\rput[bl](5,10){$\widetilde\alpha(0)$}
\psline[linewidth=0]{<-}(63.85,64.23)
(68.99,61.3)
(68.89,61.3)(76.11,59.55)
\pscustom[linewidth=0]{\psline{<-}(76,128)(84,128)
\psbezier{-}(84,128)(84,128)(84,128)
}
\psline[linewidth=0]{<-}(11.46,52.01)(5.57,49.37)
\rput[bl](2,46){$\widetilde\beta$}
\rput(64.32,123.12){$Q$}
\rput{0}(38.43,129.19){\psellipticarc[linewidth=0.1,linecolor=red,linestyle=dashed,dash=1 1](0,0)(5.71,-4.55){-85.53}{9.2}}
\end{pspicture}}
\caption{By concatenating $\widetilde\alpha$ with three circle arcs and two segment lines, we obtain a negatively orientated simple closed curve.}
\end{figure}
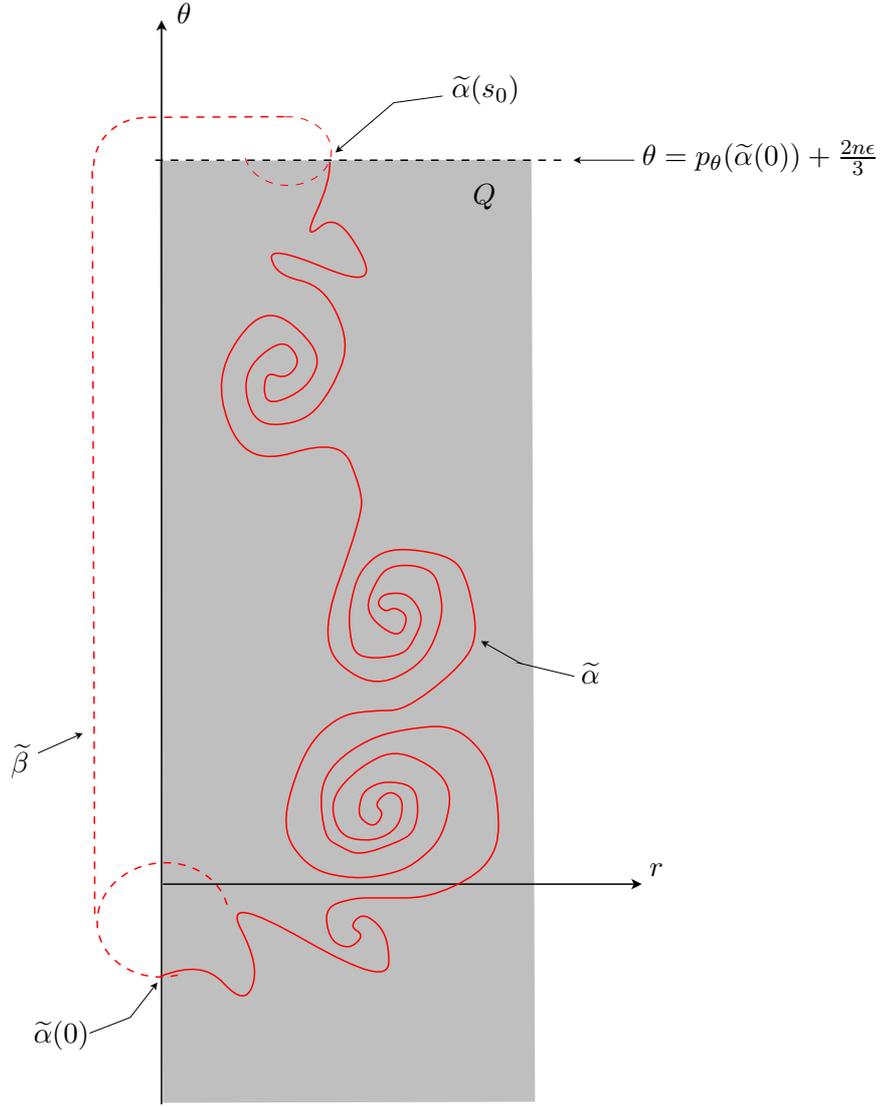

\begin{rem}
Note that the choice of $s_0$ is important: our proof of claim~\ref{c.2}  uses the fact that $s_0$ is \emph{the smallest} value of the parameter $s$ which satisfies the inequality $\widetilde u(s,n)-\widetilde u(0,n) \geq \frac{2n\epsilon}{3}$. This ensures that the curve $\widetilde\alpha$ is contained in the quarter of the  plane $Q$, and hence, prevents $\widetilde\alpha$ from ``spiraling" arround its extremity $\alpha(s_0)$.
\end{rem}

Now, using claim~2, it is not difficult to finish the proof of lemma~\ref{lemma.linking-implies-torsion}. Indeed, write
\begin{eqnarray*}
\mathrm{Torsion}_n(I,z(s_0),\xi) & = & \frac{1}{n}(\widetilde v(s_0,n)-\widetilde v(s_0,0)) \\
& = & \frac{1}{n}\Big(\mathop{\underbrace{(\widetilde v(s_0,n)-\widetilde v(0,n))}}_{A} + \mathop{\underbrace{(\widetilde v(0,n)-\widetilde v(0,0))}}_{B} + \mathop{\underbrace{(\widetilde v(0,0)-\widetilde v(s_0,0))}}_{C}\Big).
\end{eqnarray*}
Claim~2 states that quantity~A is bigger than $\frac{2n\epsilon}{3}-\frac{3}{4}$. Quantity $B$ is equal to $-n\mathrm{Torsion}_n(I,x,\xi)$, and we assumed that this quantity is bigger than $-\frac{n\epsilon}{3}$ (hypothesis~\eqref{e.hypothese-torsion-x}). Finally,  quantity~C is equal to $0$ since $v(s,0)=\xi$, for every~$s$. So we get
 $$\mathrm{Torsion}_n(I,z(s_0),\xi)\geq \frac{\epsilon}{3}-\frac{3}{4n}\geq \frac{\epsilon}{3}-\frac{1}{n},$$
 and lemma \ref {lemma.linking-implies-torsion} is proved.
\end{proof}


\section{Proof of theorem \ref{thm1}}  
\label{s.thm1}

In this section, we consider a  $C^1$-diffeomorphism $f$ of the open unit disc $\DD^2$. We assume that $f$ has compact support (\emph{i.e.} coincides with the identity outside a compact subset of $\DD^2$), and preserves the standard area two-form, which we denote by $\omega$. The goal of the section is to prove theorem~\ref{thm1}, \emph{i.e.} to prove that $f$ has an orbit with non-zero torsion. 

We will compute linking numbers and torsion using the natural trivialization of the tangent bundle of $\DD^2$, and an isotopy $I=(f_t)_{t\in [0,1]}$ joining the identity to $f$ in the space of $C^1$-diffeomorphisms of $\DD^2$ with compact supports. We have already noticed at the end of section~\ref{s.definition-torsion} that the torsion of an orbit of $f$ does not depend on the choice of an isotopy joining the identity to $f$. This will allow us to assume that $I$ is an isotopy in the set of area preserving diffeomorphisms of $\DD^2$ (the existence of such an isotopy follows from Moser's lemma). As usual, we extend the isotopy $I$ by setting $f_t:=f_{t-n} \circ f^{n}$ where $n=\lfloor t\rfloor$, for every $t\in \RR$. 

Let us first observe that the average linking number, with respect to $\omega$, of the orbits with a given point $x_0$ is well-defined:

\begin{pro}
\label{Birkh}
For $x_0 \in \DD^2$, the quantity $\mathrm{Linking}(I,x_0,x)$ is defined for almost every $x$ in $\DD^2$, 
the function $x\mapsto \mathrm{Linking}(I,x_0,x)$ is integrable, and 
$$\displaystyle\int_{\D^{2}} \mathrm{Linking}(I,x_0,x) d\omega(x)= \displaystyle\int_{\D^{2}} \mathrm{Linking}_1(I,x_0,x) d\omega(x).$$
\end{pro}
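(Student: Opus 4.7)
The plan is to express $\mathrm{Linking}_n(I,x_0,x)$ as a Birkhoff $n$-th average over the product dynamics $F := f\times f$ on $\DD^2\times\DD^2$, and then to apply Birkhoff's ergodic theorem. The key observation is the telescoping identity: the convention $f_t = f_{t-\lfloor t\rfloor}\circ f^{\lfloor t\rfloor}$ yields $f_t\circ f^k = f_{t+k}$, hence $v(I,f^k(x_0),f^k(x))(t) = v(I,x_0,x)(t+k)$, and therefore $\mathrm{Linking}_1(I,f^k(x_0),f^k(x)) = \widetilde v(I,x_0,x)(k+1) - \widetilde v(I,x_0,x)(k)$. Summing gives
$$\mathrm{Linking}_n(I,x_0,x) = \frac{1}{n}\sum_{k=0}^{n-1}\Phi\circ F^k(x_0,x),\qquad \Phi(u,v) := \mathrm{Linking}_1(I,u,v).$$

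Next I would verify that $\Phi\in L^\infty(\DD^2\times\DD^2)$. Off the diagonal, $\Phi$ is continuous. As $y\to x$ along a unit direction $\xi$, the first-order expansion $f_t(x)-f_t(y)\sim -\|y-x\|\,df_t(x)\,\xi$ shows that $\mathrm{Linking}_1(I,x,y)\to\mathrm{Torsion}_1(I,x,\xi)$, which is bounded on the unit tangent bundle thanks to the $C^1$-compactly-supported nature of $I$; and $\Phi$ vanishes when both arguments lie outside the support of $f$. Hence $\Phi$ is bounded. Since $f$ preserves $\omega$, the product $F$ preserves $\omega\times\omega$; Birkhoff's ergodic theorem produces an $F$-invariant function $\Phi^*\in L^1(\omega\times\omega)$ such that $\mathrm{Linking}_n\to\Phi^*$ almost everywhere and $\int\Phi^*\,d(\omega\times\omega)=\int\Phi\,d(\omega\times\omega)$. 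Fubini's theorem then gives, for $\omega$-a.e.\ $x_0$, the a.e.\ existence in $x$ of $\mathrm{Linking}(I,x_0,x) := \Phi^*(x_0,x)$; integrability follows from the uniform bound on $\Phi^*$.

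For the integral identity, the $\omega$-invariance of $f$ together with the change of variable $y = f^k(x)$ yields
$$\int_{\DD^2}\mathrm{Linking}_n(I,x_0,x)\,d\omega(x) = \frac{1}{n}\sum_{k=0}^{n-1} G(f^k(x_0)),\qquad G(p) := \int_{\DD^2}\Phi(p,y)\,d\omega(y).$$
Bounded convergence takes the left-hand side to $\int \mathrm{Linking}(I,x_0,x)\,d\omega(x)$, while Birkhoff applied to $G\in L^1(\DD^2,\omega)$ takes the Cesàro average on the right to the $f$-invariant conditional expectation $G^*(x_0)$ of $G$, for $\omega$-a.e.\ $x_0$. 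The main technical point—upgrading this to the pointwise identity $G^*(x_0) = G(x_0) = \int \mathrm{Linking}_1(I,x_0,x)\,d\omega(x)$ for the $x_0$ under consideration—amounts to the $f$-invariance of $G$ along the orbit of $x_0$. This is automatic when $x_0$ is a fixed point of $f$ (the case relevant to Theorem~\ref{thm1}), and in greater generality is expected to follow from the area-preserving, compactly-supported structure of the isotopy on the simply-connected disc; establishing this invariance is the hard step of the proof.
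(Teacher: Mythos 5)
Your argument is sound and is, if anything, more careful than the paper's own proof, which consists of two steps: (i) the function $x\mapsto\mathrm{Linking}_1(I,x_0,x)$ is bounded, shown by extending it continuously to the annulus obtained from $\DD^2$ by blowing up the point $x_0$ (your two-variable boundedness argument for $\Phi$ --- continuity off the diagonal, convergence to $\mathrm{Torsion}_1$ at the diagonal, vanishing of the isotopy outside a compact set --- is the same idea, applied to the blow-up of the diagonal of $\DD^2\times\DD^2$); and (ii) ``Proposition~\ref{Birkh} then follows from Birkhoff's ergodic theorem.'' Step (ii), applied directly to $f$ acting on $(\DD^2,\omega)$ with the observable $g=\mathrm{Linking}_1(I,x_0,\cdot)$, is legitimate exactly when $x_0\in\mathrm{Fix}(f)$, since only then does $\mathrm{Linking}_n(I,x_0,x)=\frac1n\sum_{k=0}^{n-1}\mathrm{Linking}_1\bigl(I,f^k(x_0),f^k(x)\bigr)$ coincide with the Birkhoff average $\frac1n\sum_{k=0}^{n-1}g(f^k(x))$. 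Your passage to the product map $F=f\times f$ makes this point explicit, and the ``hard step'' you flag --- the constancy, in the Ces\`aro sense, of $G$ along the $f$-orbit of $x_0$ --- is precisely the hypothesis that the paper's one-line proof uses tacitly; it is not an artefact of your route. Since the proposition is only ever invoked for $x_0\in\mathrm{Fix}(f)$ (in proposition~\ref{pro1}, where $x_0$ is a fixed point and $A_I(x_0)$ is compared with the average linking number), your argument settles the statement in the only case in which it is used, and in addition yields the almost-everywhere existence and the integrability for almost every $x_0$. For an arbitrary non-fixed $x_0$, neither your argument nor the paper's suffices as written, and it is not clear that the integral identity even holds in that generality.
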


 \begin{proof}[Proof of proposition~\ref{Birkh}]
 We denote by $\AA$ the annulus obtained from $\D^2$ by blowing up $x_0$. The function $x \mapsto \mathrm{Linking}_1(I,x_0,x)$, which is defined on $\DD^2\setminus\{x_0\}$, extends continuously on $\AA$ (using the derivative of $f$ at $x_0$). This shows that $x \mapsto \mathrm{Linking}_1(I,x_0,x)$ is bounded on $\DD^2\setminus\{x_0\}$. Proposition~\ref{Birkh} then follows from Birkhoff's ergodic theorem.
  \end{proof}
   
Denote by $\mathrm{Fix}(f)$ be the set of the fixed points of $f$. Theorem~\ref{thm1} is an immediate consequence of the following proposition, together with proposition~\ref{prop.linking-implies-torsion}.

\begin{pro}\label{pro1}
If $f$ is not the identity, then there exists a point $x_0 \in Fix(f)$ such that 
$$\displaystyle\int_{\D^{2}} \mathrm{Linking}(I,x_0,x) d\omega(x) \not =0.$$
 \end{pro}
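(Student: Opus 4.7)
The strategy is to express the quantity
$$M(x_0) := \int_{\D^2} \mathrm{Linking}(I,x_0,x)\,d\omega(x)$$
as an affine function of the symplectic action of the fixed point $x_0$ (with slope $-1$ and an additive constant that depends only on the isotopy~$I$), and then invoke Viterbo's symplectic capacity theorem to conclude that this action cannot be constant on $\mathrm{Fix}(f)$ when $f$ is not the identity.

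\emph{Step 1 (reduction to a Hamiltonian isotopy).} By Moser's lemma, I would replace $I$ with an isotopy through compactly supported, area-preserving diffeomorphisms, as already observed in the preamble of Section~\ref{s.thm1}. The generating time-dependent vector field $(X_t)$ is then the Hamiltonian vector field of some compactly supported Hamiltonian $H:[0,1]\times\D^2\to\R$, uniquely determined up to an additive function of~$t$.

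\emph{Step 2 (identification of $M(x_0)$ with the action).} By Proposition~\ref{Birkh}, $M(x_0)=\int_{\D^2}\mathrm{Linking}_1(I,x_0,x)\,d\omega(x)$. I would write
$$\mathrm{Linking}_1(I,x_0,x)=\int_0^1 \frac{d}{dt}\widetilde v(I,x_0,x)(t)\,dt,$$
use Fubini, and change variables using that each $f_t$ is area-preserving to transfer everything to the target copy of $\D^2$. A direct computation of the $t$-derivative of the argument of $f_t(x)-f_t(x_0)$ then expresses the integrand in terms of $X_t$ and of the loop $\gamma(t):=f_t(x_0)$. After a Stokes-type manipulation (blowing up the singularity at $x=x_0$ to justify the interchange of integrals, as in the proof of Proposition~\ref{Birkh}), I expect an identity of the shape
$$M(x_0)\;=\;C_I - A_I(x_0),$$
where $A_I(x_0)=\int_{\gamma} \lambda - \int_0^1 H_t(\gamma(t))\,dt$ is the Hamiltonian action of the contractible loop $\gamma$ (for any primitive $\lambda$ of $\omega$ on the simply connected domain $\D^2$) and $C_I$ is a Calabi-type constant depending only on~$I$.

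\emph{Step 3 (Viterbo and conclusion).} Viterbo's symplectic capacity theorem for compactly supported Hamiltonian diffeomorphisms of $\R^2$ furnishes two spectral numbers $c^-(f)\le c^+(f)$, each of which is realized as the action $A_I$ of some fixed point of $f$, and which satisfy $c^-(f)<c^+(f)$ as soon as $f\neq \mathrm{Id}$. Hence there exist two fixed points $x_-,x_+\in\mathrm{Fix}(f)$ with $A_I(x_-)\neq A_I(x_+)$. By the formula of Step~2, $M(x_-)\neq M(x_+)$, and therefore at least one of $M(x_-), M(x_+)$ is nonzero, which proves Proposition~\ref{pro1}.

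\emph{Main obstacle.} The only delicate point is Step~2: carefully deriving the identity $M(x_0)=C_I-A_I(x_0)$. The integrand $\mathrm{Linking}_1(I,x_0,\cdot)$ has a (removable) singularity at $x=x_0$, so Fubini and Stokes must be justified by blowing up this point; one also needs to verify that the additive constant that appears is indeed independent of $x_0$ (this will follow from the fact that both $C_I$ and $A_I(x_0)$ can be read off directly from the Hamiltonian $H_t$ and a choice of primitive of $\omega$). Once this formula is in hand, the rest is an immediate application of Viterbo.
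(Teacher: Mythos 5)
Your proposal follows essentially the same route as the paper: the paper's Lemma~\ref{lem4} establishes the exact identity $\int_{\DD^2}\mathrm{Linking}(I,x_0,x)\,d\omega(x)=A_I(x_0)$ (so your constant $C_I$ is zero and the slope is $+1$ with the paper's sign conventions), by precisely the Fubini/Stokes/blow-up computation you sketch in Step~2, and then invokes Viterbo's Proposition~4.2 --- that the action is non-constant on $\mathrm{Fix}(f)$ when $f\neq\mathrm{Id}$ --- to produce a fixed point with non-zero action. Your Step~3 observation that two distinct values of an affine function of the action cannot both vanish is exactly the paper's concluding step, and your argument is robust to the sign and additive constant you leave undetermined.
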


Proposition \ref{pro1} is actually a reformulation, in the context surfaces dynamics, of a symplectic geometry result due to C.~Viterbo (proposition~\ref{viterbo} below). For every $t\in [0,1]$, let $X_t$ be the vector field on $\D^2$ defined by $X_t(x)=\frac{d}{ds}_{|s=t} f_s(x)$. Then, for every $t\in [0,1]$, there is a unique normalised function $H_t:\D^2\to\RR$ which ``generates the vector field $X_t$". By such, we mean that $H_t$ is null outside a compact subset of $\DD^2$ and  satisfies $\omega(X_t,\cdot)=dH_t$. This allows to define the symplectic action (for the isotopy $I$) of a point $x\in\mathrm{Fix}(f)$:

\begin{defi}
Let $x$ be a point in $\mathrm{Fix}(f)$. The \emph{symplectic action} (for the isotopy $I$) of the fixed point $x$ is the quantity 
 $$A_I(x)=\displaystyle\int_{\gamma_x} \; \lambda -  \displaystyle\int_{0}^1 H_t(f_t(x)) \; dt,$$
 where $\gamma_x:[0,1]\to\DD^2$ is the loop defined by $\gamma_x(t)=f_t(x)$ and $\lambda$ is a primitive of the area form $\omega$.
\end{defi}

Viterbo's result reads as follows:

\begin{pro}[Viterbo; see proposition 4.2. of \cite{Viterbo}]
\label{viterbo}
If $f$ is not the identity, then 
$$\mathop{\sup}_{x\in\mathrm{Fix}(f)}\left\{A_I(x)\right\} \not= \mathop{\inf}_{x\in\mathrm{Fix}(f)}\left\{A_I(x)\right\}.$$
\end{pro}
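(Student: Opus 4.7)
The plan is to use Viterbo's theory of generating functions quadratic at infinity (GFQI), which is the natural variational framework for detecting fixed points of compactly supported symplectic diffeomorphisms. First I would extend $f$ by the identity to a compactly supported Hamiltonian diffeomorphism of $\RR^2$, extending the isotopy $I$ and the Hamiltonians $H_t$ accordingly; this extension does not create new non-trivial fixed points and preserves the action $A_I$ on the old ones. Then, invoking the Sikorav--Chaperon theorem, I would produce a smooth function $S : \RR^2 \times \RR^N \to \RR$, equal to a fixed non-degenerate quadratic form $Q$ outside a compact set, such that the critical points of $S$ are in natural bijection with the fixed points of $f$, with matching critical values: the critical value attached to $x \in \mathrm{Fix}(f)$ equals $A_I(x)$.

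Next, from $S$ I would extract the Viterbo spectral invariants. Write $S^c = \{S \leq c\}$, let $k$ be the Morse index of $Q$, and identify (by excision plus the linear deformation onto $Q$, for $C$ large) the group $H^*(S^C, S^{-C})$ with $H^{*-k}(\RR^2)$. This target carries a natural unit class $\mathbf 1$ in degree $k$ and a top class $T$ in degree $k+2$. For $\alpha \in \{\mathbf 1, T\}$ I would define the min-max value
\begin{equation*}
c(\alpha, S) \;=\; \inf\bigl\{\, c \in \RR \;:\; \alpha \text{ lies in the image of } H^*(S^C, S^c) \to H^*(S^C, S^{-C}) \,\bigr\}.
\end{equation*}
Standard Lusternik--Schnirelman theory then guarantees that $c(\alpha, S)$ is a finite critical value of $S$, hence of the form $A_I(x_\alpha)$ for some $x_\alpha \in \mathrm{Fix}(f)$, and the cup-length inequality $T = \mathbf 1 \smile T$ yields $c(T, S) \geq c(\mathbf 1, S)$. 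Moreover, by the uniqueness-up-to-stabilization theorem for GFQI, the difference $\gamma(f) := c(T,S) - c(\mathbf 1, S)$ is a well-defined invariant of $f$ (independent of the isotopy and of the choice of $S$).

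The crux is the non-degeneracy statement $\gamma(f) = 0 \Rightarrow f = \mathrm{Id}$. Assuming $c(T,S) = c(\mathbf 1, S) = a$, the Lusternik--Schnirelman multiplicity bound forces the critical set at level $a$ to have cohomological cup-length at least $2$; combined with the fact that $S$ is fiberwise equal to $Q$ over points in the complement of $\mathrm{supp}(f)$, one deduces via the GFQI-uniqueness that $S$ can be brought, by fibered diffeomorphism and stabilization, to $Q + a$, which means $f = \mathrm{Id}$. Granting this, the proposition is immediate: if $f \neq \mathrm{Id}$ then $c(T, S) > c(\mathbf 1, S)$, and since both are symplectic actions of fixed points,
\begin{equation*}
\sup_{x \in \mathrm{Fix}(f)} A_I(x) \;\geq\; c(T,S) \;>\; c(\mathbf 1, S) \;\geq\; \inf_{x \in \mathrm{Fix}(f)} A_I(x).
\end{equation*}
The main obstacle, and where I would rely entirely on Viterbo's original argument rather than reprove anything, is precisely the non-degeneracy step $\gamma(f) = 0 \Rightarrow f = \mathrm{Id}$: this requires the full generating-function machinery (existence \emph{and} uniqueness up to fibered diffeomorphism and stabilization) together with the sharp Lusternik--Schnirelman estimate. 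The extension to $\RR^2$ and the identification of min-max values with fixed-point actions are comparatively routine once that black box is available.
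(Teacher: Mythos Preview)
The paper does not give a proof of this proposition at all: it is simply quoted as Viterbo's result (proposition~4.2 of~\cite{Viterbo}) and used as a black box. So there is no ``paper's own proof'' to compare against; what you have written is, in fact, a reasonable outline of Viterbo's original argument.

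Your sketch is largely correct in spirit, but there is one technical slip worth flagging. You identify $H^*(S^C,S^{-C})$ with $H^{*-k}(\RR^2)$ and then invoke a unit class~$\mathbf 1$ and a top class~$T$ in degrees $k$ and $k+2$. But $H^*(\RR^2)$ is concentrated in degree~$0$, so there is no top class to pair with~$\mathbf 1$, and the cup-length argument collapses. In Viterbo's actual setup one first interprets the graph of $f$ as a Lagrangian in $T^*\RR^2$ and then compactifies the base to $S^2$; the generating function lives over $S^2\times\RR^N$, and the relevant cohomology is $H^*(S^2)$, which does carry the two classes you need. With that correction (and the corresponding statement that $S$ equals $Q$ plus a constant over the added point at infinity), your outline matches Viterbo's, including the acknowledgment that the non-degeneracy step $\gamma(f)=0\Rightarrow f=\mathrm{Id}$ is the hard core of the argument and is taken from~\cite{Viterbo} rather than reproved.
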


Now, we will see that the hamiltonian action $A(x_0)$ of a point $x_0 \in\mathrm{Fix}(f)$ is nothing else that the average linking around $x_0$, that is $\mathrm{Linking}(I,x_0,\mu)$. The following lemma is well-known by experts in hamiltonian dynamics; the first author learned it from P. Le Calvez.

 \begin{lem}
 \label{lem4} 
 We have $$A_I(x_0)=\mathrm{Linking}(I,x_0,\omega).$$
  \end{lem}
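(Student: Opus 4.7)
The plan is to unfold the definition of the linking integral, swap space and time integration via Fubini, and reduce the problem to two pointwise computations at each fixed $t$---one producing the Hamiltonian contribution $\int_0^1 H_t(f_t(x_0))\,dt$ and one producing the primitive contribution $\int_{\gamma_{x_0}}\lambda$. Starting from Proposition~\ref{Birkh}, one writes
$$\mathrm{Linking}(I,x_0,\omega)=\int_{\DD^2}\int_0^1 \frac{d}{dt}\Big[\tfrac{1}{2\pi}\arg\bigl(f_t(x)-f_t(x_0)\bigr)\Big]\,dt\,d\omega(x).$$
The inner derivative equals $\tfrac{1}{2\pi}(\zeta\wedge\dot\zeta)/|\zeta|^2$ for $\zeta=\zeta_t=f_t(x)-f_t(x_0)$; applying Fubini and then the $\omega$-preserving change of variable $y=f_t(x)$, the linking rewrites as
$$\frac{1}{2\pi}\int_0^1\int_{\DD^2}\frac{(y-z_t)\wedge\bigl(X_t(y)-X_t(z_t)\bigr)}{|y-z_t|^2}\,d\omega(y)\,dt,\qquad z_t:=f_t(x_0).$$

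I would split the inner integrand into $A(t)-B(t)$ according to the decomposition $X_t(y)-X_t(z_t)$. For $A(t)$: the Hamiltonian relation $\omega(X_t,\cdot)=dH_t$ yields $(y-z)\wedge X_t(y)=-\nabla H_t(y)\cdot(y-z)$, and together with $(y-z)/|y-z|^2=\nabla_y\log|y-z|$ the integrand becomes $-\nabla H_t(y)\cdot\nabla_y\log|y-z_t|$. Since $H_t$ vanishes near $\partial\DD^2$ and $\Delta_y\log|y-z_t|=2\pi\delta_{z_t}$, Green's identity collapses $A(t)$ to $\pm H_t(z_t)$.

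For $B(t)$, with the constant vector $w:=X_t(z_t)=\dot z_t$: the formula $(y-z)\wedge w/|y-z|^2=\nabla_y\arg(y-z)\cdot w=-\nabla_z\arg(y-z)\cdot w$ identifies the inner integral with $\pm\nabla_z\Phi(z_t)\cdot X_t(z_t)$, where $\Phi(z):=\tfrac{1}{2\pi}\int_{\DD^2}\arg(y-z)\,d\omega(y)$, i.e.\ with $\pm\tfrac{d}{dt}\Phi(z_t)$. Although $\Phi$ is only defined modulo $\mathbb{Z}$, its gradient is a well-defined $1$-form on $\DD^2$; integrating $\tfrac{d}{dt}\Phi(z_t)$ over $t\in[0,1]$ produces the total change of $\Phi$ around the loop $\gamma_{x_0}$ (which closes up because $x_0\in\mathrm{Fix}(f)$), and this change equals $\int_{\DD^2}n(\gamma_{x_0},y)\,d\omega(y)$ by the classical winding-number argument, hence the signed area enclosed by $\gamma_{x_0}$, and therefore $\int_{\gamma_{x_0}}\lambda$ by Stokes' theorem.

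Assembling the $A(t)$ and $B(t)$ contributions reproduces precisely $A_I(x_0)=\int_{\gamma_{x_0}}\lambda-\int_0^1 H_t(f_t(x_0))\,dt$. The main obstacle is the treatment of $B(t)$: the function $\Phi$ is multivalued, so the argument has to be phrased in terms of its single-valued differential, and the closing identity rests on the winding-number formula for the signed area of a planar loop. A careful sign audit---of Green's identity, of the orientation of $\arg$, and of the conventions fixing $\omega$ and $\lambda$---is needed to make the signs in the statement of the lemma come out exactly right.
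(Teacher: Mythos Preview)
Your approach is correct and genuinely different from the paper's. The paper begins with a reduction you do not make: using the homotopy invariance of the symplectic action, it replaces $I$ by an isotopy that \emph{fixes} $x_0$ for all $t$. This kills the loop $\gamma_{x_0}$ entirely, so that $A_I(x_0)=-\int_0^1 H_t(x_0)\,dt$, and only the analogue of your $A(t)$ term survives. That term is then handled in one stroke via the identity $d\theta(X_t)\,\omega=-d(H_t\,d\theta)$ and Stokes' theorem on $\DD^2\setminus D_\epsilon$, letting $\epsilon\to 0$.

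By contrast, you work with the general isotopy and decompose $X_t(y)-X_t(z_t)$, producing both the Hamiltonian term (via Green's identity and $\Delta\log|y-z|=2\pi\delta_z$) and the primitive term (via the winding-number formula for signed area). Your route is more self-contained---it does not invoke the homotopy invariance of $A_I$---and it makes the geometric origin of each summand in the action functional visible. The paper's route is shorter once the reduction is granted, and sidesteps the multivalued potential $\Phi$ and the attendant winding-number bookkeeping. Your own caveat about the sign audit is well placed: both arguments are sensitive to the conventions for $\omega$, $\wedge$, and the orientation of $\arg$, and this is where errors are most likely to creep in.
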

  
\begin{proof}
The symplectic action $A_I(x_0)$ does not depend on the class of homotopy of the isotopy $I$ (see e.g. \cite{Viterbo}). 
If $x,y\in \DD^2$, it is easy to find an area preserving diffeomorphism with compact support $\tau_{x,y}$ such that $\tau_{x,y}(x)=y$, such that $\tau_{x,y}=\mathrm{Id}$ if $x=y$, and such that $\tau_{x,y}$ depends continuously on the couple $(x,y)$. Up to replacing $f_t$ by $\tau_{x_0,f_t(x_0)}\circ f_t$, we may therefore assume that the isotopy $I=(f_t)_{t \in [0,1]}$ fixes the point $x_0$, \emph{i.e.} satisfies $f_t(x_0)=x_0$ for all $t$. Under this assumption, the formula for the symplectic action of the point $x_0$ reads
$$A_I(x_0)= - \int_{0}^1 H_t(x_0) \; dt.$$
We are left to prove that $\mathrm{Linking}(I,x_0,\omega)$ is equal to the integral on the right-hand side above. For $x\in\DD^2\setminus\{x_0\}$, let $\theta(x)\in\RR/\ZZ$ be the argument of $\frac{x-x_0}{\|x-x_0\|}\in\SS^1$. Note that $d\theta$ is a closed $1$-form on $\D^2\setminus\{x_0\}$. By definition, for every $x\in\DD^2\setminus\{x_0\}$, the quantity $\mathrm{Linking}_1(I,x,x_0)$ is the variation of $\theta(f_t(x))$ when $t$ runs from $0$ to $1$. Since $X_t(x)=\frac{d}{ds}_{|s=t} f_s(x)$, this yields
$$\mathrm{Linking}_1(I,x,x_0)=\int_0^1 d\theta(X_t(f_t(x)) \;dt$$
So, if we denote by $D_\epsilon$  the disc of radius $\epsilon$ and centered at $x_0$, we have
\begin{eqnarray*}
\mathrm{Linking}(I,x_0,\omega)  
& = & \int_{\D^{2}\setminus \{x_0\}} \mathrm{Linking}_1(I,x,x_0) \; \omega(x)\\
& = &  \int_{\D^{2}\setminus \{x_0\}} \left( \int_0^1 d\theta\big(X_t\big(f_t(x)\big)\big) dt\right) \omega(x)  \\
& = & \int_0^1 \left(\int_{\D^2\setminus \{x_0\}} d\theta\big(X_t\big(f_t(x)\big)\big) \omega(x) \right) dt   \\
& = &  \int_0^1 \left(\int_{\D^2\setminus\{x_0\}} d\theta\big(X_t(x)\big) \; \omega(x) \right) dt\\
& = &  \int_0^1 \left(\lim_{\epsilon\to 0}\int_{\D^2\setminus D_\epsilon} d\theta(X_t) \; \omega \right) dt
\end{eqnarray*}
(the third equality uses the fact that $f_t$ preserves $\omega$ and fixes $x_0$). Now observe that
$$d\theta(X_t)\;\omega = d\theta \wedge \omega(X_t,\cdot) = -d(H_t \;d\theta)$$ 
(the first equality follows from the nullity of the 3-form $d\theta\wedge \omega$, whereas the second equality is a consequence of the nullity of the form $dd\theta$). Using this and Stokes' theorem, we obtain
\begin{eqnarray*}
\mathrm{Linking}(I,x_0,\omega)  & = &  - \int_0^1\left( \lim_{\epsilon \rightarrow 0} \displaystyle\int_{\DD^2\setminus D_\epsilon}   d(H_t \;d\theta)\right) dt \\
& = & - \int_0^1\left( \lim_{\epsilon \rightarrow 0} \displaystyle\int_{\partial D_\epsilon}  H_t\; d\theta\right) dt \\
& = & -\int_0^1 H_t(x_0) dt.
\end{eqnarray*}
This completes the proof of lemma~\ref{lem4}.
\end{proof}

\begin{proof}[Proof of proposition~\ref{pro1}]
If $f$ is not the identity, Viterbo's proposition~\ref{viterbo} shows that there is at least one point $x_0\in\mathrm{Fix}(f)$ such that the symplectic action $A_I(x_0)$ is different from zero, and lemma~\ref{lem4} shows that $A_I(x_0)$ is equal to the integral $\int_{\DD^2}\mathrm{Linking}(I,x_0,x)\;\omega(x)$.
\end{proof}

Proposition~\ref{pro1} can also be deduced from of Le Calvez's equivariant foliated version of the Brouwer plane translations theorem (\cite{LeCalvez}), together with a recent result of O. Jaulent, which allows  to apply Le Calvez's result in a situation where there are infinitely many fixed points (\cite{Jaulent}). See~\cite[proposition~2.1., item (5)]{BFLM} for more details. Observe that this alternative proof also works in the case where the area form $\omega$ is replaced by any $f$-invariant probability measure, whose support in not contained in $\mathrm{Fix}(f)$. 

\begin{proof}[Proof of theorem~\ref{thm1}]
Assume that $f$ is not the identity. Proposition~\ref{pro1} shows the existence of a point $x_0\in\mathrm{Fix}(f)$ and a point $x\in\DD^2\setminus\{x_0\}$, such that the linking number $\mathrm{Linking}(I,x_0,x)$ is not zero. According to proposition \ref{prop.linking-implies-torsion}, this implies the existence of a point $z\in \DD^2$ such that the torsion 
$\mathrm{Torsion}(I,z)$ is not null.
\end{proof}

  \section{Proof of theorem \ref{thm2}}
  \label{s.thm2}

The purpose of this section is to prove theorem~\ref{thm2}. Before starting the proof strictly speaking, we need to define a notion of \emph{linking number} for a pair of curves.

\begin{defi}\label{defi2}
Let $\alpha,\beta:\RR \mapsto \R^2$  be two curves such that $\alpha(t)\neq\beta(t)$ for all $t\in\R$. Consider the function $v:\R\to\SS^1$ given by $v(t)= \frac{\beta(t)-\alpha(t)}{\|\beta(t)-\alpha(t)\|}.$ Choose a continuous lift $\widetilde{v}:\R \to \R$ of $v$. The \emph{linking number} of the curves $\alpha$ and $\beta$ is the quantity 
$$\mathrm{Linking}(\alpha,\beta):=\mathop{\lim}_{t\rightarrow+\infty}\frac{1}{t}\Big(\widetilde{v}(t)-\widetilde{v}(0)\Big),$$ 
provided that the limit exists. 
\end{defi}

\begin{rem}
\label{r.linking-curve-orbits}
If the curves $\alpha,\beta$ are defined by $\alpha(t)=f_t(x)$ and $\beta(t)=f_t(y)$ for some isotopy $I=(f_t)_{t\in [0,1]}$ on a surface $S$ and for some points $x,y\in S$, then $\mathrm{Linking}(\alpha,\beta)=\mathrm{Linking}(I,x,y)$.
\end{rem}

The following technical lemma will be used twice in the proof of  theorem~\ref{thm2}:

\begin{lem}
\label{lem3}
Consider two curves $\alpha:\R \to \R^2$ and $\beta:\R\to\R^2$, and assume that there is a positive constant $d$ such that $\|\beta(t)-\alpha(t)\|\geq d$ for all $t \in \R$. Consider two curves $\alpha':\R \to \R^2$ and $\beta':\R\to\R^2$ such that $\|\alpha(t)-\alpha'(t)\|\leq \frac{d}{2}$ and $\|\beta(t)-\beta'(t)\|\leq \frac{d}{2}$ for all $t\in\RR$. Then, 
 $$\mathrm{Linking}(\alpha,\beta)=\mathrm{Linking}(\alpha',\beta').$$
\end{lem}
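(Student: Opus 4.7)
Proof plan for Lemma \ref{lem3}:

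The strategy is to show that the normalized difference vectors $v(t) := (\beta(t)-\alpha(t))/\|\beta(t)-\alpha(t)\|$ and $v'(t) := (\beta'(t)-\alpha'(t))/\|\beta'(t)-\alpha'(t)\|$ are never antipodal in $\SS^1$, from which a uniform (in $t$) comparison of their continuous lifts will follow, yielding equality of the Cesàro limits.

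The first step is the antipodality check, which is the only substantive computation. Fix $t\in\RR$ and write $u=\beta(t)-\alpha(t)$, $u'=\beta'(t)-\alpha'(t)$. The hypothesis implies $\|u\|\geq d$ and, by the triangle inequality,
\[
\|u'-u\|=\|(\beta'-\beta)(t)-(\alpha'-\alpha)(t)\|\leq \tfrac{d}{2}+\tfrac{d}{2}=d.
\]
Note that $u'\neq 0$, since otherwise $\mathrm{Linking}(\alpha',\beta')$ would not make sense. Suppose now, for contradiction, that $v(t)=-v'(t)$. Then $u'=-\lambda u$ for some $\lambda>0$ (with $\lambda=\|u'\|/\|u\|$), so $\|u'-u\|=(1+\lambda)\|u\|\geq (1+\lambda)d$. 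Combined with $\|u'-u\|\leq d$, this forces $1+\lambda\leq 1$, contradicting $\lambda>0$. Hence $v(t)\neq -v'(t)$ for every $t\in\RR$.

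Because $v(t)$ and $v'(t)$ are never antipodal, there exists a unique continuous function $\phi:\RR\to(-1/2,1/2)$ such that $v'(t)=\pi(\phi(t))\cdot v(t)$ in $\SS^1\subset\CC$ (explicitly, $\phi(t)$ is the shortest signed angular displacement from $v(t)$ to $v'(t)$, measured in full turns). Choose continuous lifts $\widetilde v,\widetilde v':\RR\to\RR$ of $v,v'$. Then $\widetilde v'(t)-\widetilde v(t)-\phi(t)$ takes values in $\ZZ$ and depends continuously on $t$, so it equals some constant $k\in\ZZ$. Consequently
\[
\frac{\widetilde v'(t)-\widetilde v'(0)}{t}=\frac{\widetilde v(t)-\widetilde v(0)}{t}+\frac{\phi(t)-\phi(0)}{t}.
\]
Since $\phi$ is bounded in $(-1/2,1/2)$, the last term tends to $0$ as $t\to+\infty$. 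Letting $t\to+\infty$ yields $\mathrm{Linking}(\alpha',\beta')=\mathrm{Linking}(\alpha,\beta)$, as required.

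The main (and essentially only) obstacle is the antipodality check, which has to be set up so that the triangle inequality produces a strict contradiction even though the hypotheses are non-strict; the key point, exploited above, is that a potential equality $\|u'-u\|=d=\|u\|$ would still force $\lambda=0$, i.e.\ $u'=0$, which is excluded precisely because $\mathrm{Linking}(\alpha',\beta')$ is assumed to be defined.
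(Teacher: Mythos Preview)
Your proof is correct and follows essentially the same line as the paper's: bound the angle between $\beta(t)-\alpha(t)$ and $\beta'(t)-\alpha'(t)$ uniformly in $t$, then let $t\to\infty$. The only difference is cosmetic: the paper obtains the sharper bound that this angle is at most $\tfrac{1}{4}$ turn (from $\|u'-u\|\leq\|u\|$, which expands to $\langle u,u'\rangle\geq 0$), whereas you only check non-antipodality (angle strictly less than $\tfrac{1}{2}$ turn); either bound suffices for the limit argument.
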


\begin{proof}
Since $\left\|\big(\alpha(t)-\alpha'(t)\big)+\big(\beta(t)-\beta'(t)\big)\right\|\leq  \|\beta(t)-\alpha(t)\|$, the angle between the vectors $\beta(t)-\alpha(t)$ and $\beta'(t)-\alpha'(t)$ is less than $\frac{1}{4}$ turn. So, $\frac{1}{t}\Big|\mathrm{Linking}_t(\alpha,\beta)-\mathrm{Linking}_t(\alpha',\beta')\Big| \leq \frac{1}{4t}.$
Letting $t\rightarrow+\infty$, we obtain $\mathrm{Linking}(\alpha,\beta)=\mathrm{Linking}(\alpha',\beta').$ 
\end{proof}

 \bigskip 
  
We are now ready to begin the proof of theorem~\ref{thm2}. We consider a diffeomorphism $f$ of the torus $\T^2= \R^2/\Z^2$ isotopic to the identity and an isotopy $I=(f_t)_{t\in [0,1]}$ joining the $\mathrm{Id}_{\T^2}$ to $f$ in $\mathrm{Diff}^1(\TT^2)$. This isotopy $I$ can be lifted to an isotopy $\widetilde I=(\widetilde f_t)_{t\in [0,1]}$  joining $\mathrm{Id}_{\R^2}$ to a lift $\widetilde f$ of $f$ in $\mathrm{Diff}^1(\RR^2)$. As usual, for $t\in\R$, we set  $f_t=f_{t-n} \circ f^{n}$ and $\widetilde f_t=\widetilde f_{t-n} \circ f^{n}$ where $n=\lfloor t\rfloor$.  We assume that the rotation set of $\widetilde f$ has non-empty interior. Recall that this hypothesis depends only on $f$, and not on the choice of the lift $\widetilde f$ (two different lifts have the same rotation set up to a translation). We trivialize the tangent bundle of $\TT^2$, using the trivialization induced by the affine structure of $\RR^2$. To prove theorem~\ref{thm2}, we have to find a point $\bar z \in \TT^2$ such that the torsion $\mathrm{Torsion}(I,\bar z)$ is non-zero. We recall that this is equivalent to finding a point $z \in \RR^2$ such that the torsion $\mathrm{Torsion}(\widetilde I, z)$ is non-zero (see remark \ref{rem1}).
 
Let $(p/q,p'/q)$ be a vector with rational coordinates in the interior of the rotation set of $\widetilde f$. Let $g:=f^q$ and $\widetilde g := \widetilde f^q-(p,p')$.  It is easy to check that $\rho (\widetilde g) = q \rho (\widetilde f)-(p,p')$; in particular, $(0,0)$ is in the interior of $\rho (\widetilde g)$. Moreover, it is easy to check that $\mathrm{Torsion}(g,z)=q \mathrm{Torsion}(f,z)$, for every $z\in\RR^2$; in particular, $f$ has an orbit with non-zero torsion if and only if it is the case for $g$. Therefore, up to replacing $f$ by $g$, we may ---~and we will~--- assume that $(0,0)$ is in the interior of the rotation set of $\widetilde f$.

We will use the following lemma, which is due to J.~Franks:

 \begin{lem}[Franks, \cite{Franks}]
 \label{lemFranks}
Let $g$ be a homeomorphism of the torus $\TT^2$ that is isotopic to the identity, and $\widetilde g$ be a lift of $g$ to $\RR^2$. Let $(p/q,p'/q)$ be a vector with rational coordinates in the interior of the rotation set of $\widetilde g$. Then, there exits $z \in \RR^2$ such that $\widetilde g^q (z)=z+(p,p')$. In other words, each vector with rational coordinates $(p/q,p'/q)$ in the interior of the rotation set of $\widetilde g$ is realized as a rotation vector of a periodic point of $g$ of period $q$.
 \end{lem}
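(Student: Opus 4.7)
My plan is to reduce the lemma to an existence-of-fixed-point statement and then invoke the plane translation theorem of Brouwer in an equivariant setting. First I would set $\widetilde h := \widetilde g^{\,q} - (p,p')$, which is itself a lift of $g^q$ to $\RR^2$, and observe that the desired conclusion $\widetilde g^{\,q}(z)=z+(p,p')$ is equivalent to $\widetilde h(z)=z$. Since $\rho(\widetilde g^{\,q}) = q\,\rho(\widetilde g)$ and subtracting the integer vector $(p,p')$ from a lift translates its rotation set by $-(p,p')$, one has $\rho(\widetilde h) = q\,\rho(\widetilde g) - (p,p')$. The hypothesis $(p/q,p'/q)\in\mathrm{int}\,\rho(\widetilde g)$ thus becomes $(0,0)\in\mathrm{int}\,\rho(\widetilde h)$, and it suffices to establish the reduced statement: \emph{if $\widetilde h$ is a lift of a torus homeomorphism isotopic to the identity and $(0,0)\in\mathrm{int}\,\rho(\widetilde h)$, then $\widetilde h$ has a fixed point.}

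To prove the reduced statement I would argue by contradiction, assuming $\widetilde h$ is fixed-point-free. Because $g^q$ is isotopic to the identity, $\widetilde h$ is orientation-preserving and commutes with every $\ZZ^2$-translation. Brouwer's plane translation theorem then provides, through every point, a \emph{Brouwer line}: a properly embedded topological line $\ell\subset\RR^2$ with $\widetilde h^n(\ell)\cap\ell=\emptyset$ for all $n\neq 0$ and with $\widetilde h(\ell)$ and $\widetilde h^{-1}(\ell)$ lying in opposite connected components of $\RR^2\setminus\ell$. Intuitively, every orbit drifts across such a line in a definite direction; combined with $\ZZ^2$-equivariance, this should force the displacement vectors $\widetilde h^n(z)-z$ of every orbit to accumulate in a fixed closed half-plane of $\RR^2$ bounded by a line through the origin. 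That is incompatible with $(0,0)$ being an interior point of $\rho(\widetilde h)$, which requires rotation vectors in every direction around the origin, yielding the sought contradiction.

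The main obstacle is turning this last step into a rigorous argument, because a single Brouwer line is not $\ZZ^2$-invariant and because Brouwer's theorem alone does not organize the drift directions globally on the universal cover of $\TT^2$. Franks' original approach handles this through the construction of a \emph{periodic Brouwer disk chain}: starting from several orbits whose rotation vectors form a small triangle around $0$, one concatenates appropriately translated Brouwer disks to produce a periodic chain, which is forbidden by Franks' lemma on chain-recurrent points of fixed-point-free orientation-preserving plane homeomorphisms. A cleaner modern alternative is to invoke Le Calvez's equivariant foliated version of the Brouwer translation theorem, which yields a $\ZZ^2$-invariant oriented topological foliation of $\RR^2\setminus\mathrm{Fix}(\widetilde h)$ whose transverse structure directly constrains all rotation vectors of $\widetilde h$ to lie in a half-plane through the origin. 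Either route closes the proof, and the reduction carried out in the first paragraph then finishes the lemma.
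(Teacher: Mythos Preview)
The paper does not prove this lemma at all: it is quoted as a result of Franks, with a citation to \cite{Franks}, and used as a black box in the proof of Theorem~\ref{thm2}. So there is no ``paper's own proof'' to compare against.

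That said, your sketch is sound. The reduction in your first paragraph is exactly the standard one and is correct: with $\widetilde h=\widetilde g^{\,q}-(p,p')$ one has $\rho(\widetilde h)=q\,\rho(\widetilde g)-(p,p')$, so the problem becomes showing that a lift with $(0,0)$ in the interior of its rotation set must have a fixed point. Your second and third paragraphs correctly identify that a single Brouwer line is not enough and name the two canonical ways to close the argument, namely Franks' original free-disk-chain lemma and Le~Calvez's equivariant foliated Brouwer theorem. You do not carry either out, so this is an outline rather than a proof, but the outline is accurate.

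One point worth sharpening if you pursue the Le~Calvez route: the equivariant foliated theorem is applied on $\TT^2$ to $g^q$ together with the isotopy determined by the lift $\widetilde h$. The hypothesis ``$\widetilde h$ is fixed-point-free'' is exactly the statement that this isotopy has no \emph{contractible} fixed point, which is what guarantees that the resulting transverse foliation of $\TT^2$ has no singularities. Lifting it gives a $\ZZ^2$-invariant nonsingular oriented foliation of $\RR^2$ whose leaves are Brouwer lines for $\widetilde h$; the homological direction of such a foliation forces $\rho(\widetilde h)$ into a closed half-plane with $(0,0)$ on its boundary (indeed into a line), contradicting $(0,0)\in\mathrm{int}\,\rho(\widetilde h)$.
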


Since the rotation set of $\widetilde f$ has non-empty interior, we can find three affinely independent vectors $u_1,u_2,u_3$ with rational coordinates in the interior of the rotation set of $\widetilde f$. According to lemma \ref{lemFranks}, there exist three periodic orbits of $f$ with respective rotation vectors $u_1,u_2,u_3$. Let $E$ be the finite subset of $\TT^2$ made of the points of these three orbits. Llibre and MacKay \cite{Llibre-MacKay:91} have proved that $f$ is isotopic, relatively to $E$, to a pseudo-Anosov homeomorphism with marked points $\phi$. According to a well-known result of M. Handel (\cite{Han:85}), this implies that $\phi$ is a topological factor of $f$. More precisely, there exists a continuous surjection $h:\T^2\to \T^2$, which fixes each point of $E$, which is homotopic to the identity (through a homotopy which fixes each point of $E$), such that $h\circ f=\phi\circ h$. As $h$ is homotopic to the identity, it has a lift $\widetilde h:\R^2\to\R^2$ commuting to the action of $\Z^2$. Then the homeomorphism $\phi$ has a unique lift $\widetilde \phi:\R^2\to \R^2$ such that $\widetilde h\circ \widetilde f=\widetilde \phi\circ \widetilde h$. Now, note that, considered as homeomorphism of $\TT^2$, the homeomorphism $\phi$ is isotopic to the identity (since it is isotopic to $f$). So we may consider an isotopy $J=(\phi_t)_{t \in [0,1]}$ joining $Id_{\T^2}$ to $\phi$ in $\mathrm{Homeo}(\T^2)$, which lifts to 
an isotopy $\widetilde J=(\widetilde\phi_t)_{t \in [0,1]}$ joining $Id_{\R^2}$ to $\widetilde\phi$  in $\mathrm{Homeo}(\R^2)$. As usual, for $t \in \RR$, we set $\phi_t=\phi_{t-n} \circ \phi^{n}$ and $\widetilde\phi_t=\widetilde\phi_{t-n} \circ \widetilde\phi^{n}$ where $n=\lfloor t\rfloor$. For each $t \in \RR$, the homeomorphism $\widetilde\phi_t$ commutes to the action of $\ZZ^2$.

\begin{rem}
Since $h$ is not one-to-one, it is not possible in general to find an isotopy $(\phi_t)_{t \in [0,1]}$ joining $Id_{\T^2}$ to $\phi$ such that $h \circ f_t = \phi_t \circ h$, for every $t$.
\end{rem} 

Since  $\widetilde h$ commutes to the action of $\Z^2$, and since $\R^2/\Z^2$ is compact, $\widetilde h$ is at a finite uniform distance from the identity; we will denote this distance by $d_1$:
 $$d_1:=\left\|\widetilde h-Id_{\R^2}\right\|_{\infty}=\sup_{z \in \R^2}  \mathrm{dist}\left(\widetilde h(z),z\right)<+\infty$$      
 (we recall that $\mathrm{dist}$ denote the canonical euclidean distance on the affine space $\RR^2$). 
  
 \begin{lem}
 \label{lem5}
The homeomorphisms $\widetilde f$ and $\widetilde \phi$ have the same rotation set. In particular, $(0,0)$ is in the interior of the rotation set of $\widetilde\phi$. 
 \end{lem}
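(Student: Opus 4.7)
The plan is to exploit the semi-conjugacy $\widetilde h\circ \widetilde f=\widetilde\phi\circ \widetilde h$ together with the fact that $\widetilde h$ is at bounded distance $d_1$ from the identity of $\RR^2$. First, iterating the intertwining relation (and using the uniqueness of the lift $\widetilde\phi$ characterized by this equation) gives $\widetilde h\circ \widetilde f^n=\widetilde \phi^n\circ \widetilde h$ for every $n\in\NN$. For any $\widetilde z\in \RR^2$, if we set $\widetilde w:=\widetilde h(\widetilde z)$, we can write
$$\bigl(\widetilde\phi^n(\widetilde w)-\widetilde w\bigr)-\bigl(\widetilde f^n(\widetilde z)-\widetilde z\bigr)=\bigl(\widetilde h(\widetilde f^n(\widetilde z))-\widetilde f^n(\widetilde z)\bigr)-\bigl(\widetilde h(\widetilde z)-\widetilde z\bigr),$$
whose euclidean norm is at most $2d_1$. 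Dividing by $n$ shows that $\rho_n(\widetilde\phi,\widetilde w)$ and $\rho_n(\widetilde f,\widetilde z)$ differ by at most $2d_1/n$.

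Now I would prove the two inclusions. For $\rho(\widetilde f)\subseteq \rho(\widetilde\phi)$: any $v\in\rho(\widetilde f)$ is a limit of some sequence $\rho_{n_k}(\widetilde f,\widetilde z_k)$ with $n_k\to +\infty$; setting $\widetilde w_k:=\widetilde h(\widetilde z_k)$, the estimate above yields $\rho_{n_k}(\widetilde\phi,\widetilde w_k)\to v$, so $v\in\rho(\widetilde\phi)$. For the reverse inclusion, I need to lift arbitrary points of $\RR^2$ through $\widetilde h$. Since $h:\TT^2\to \TT^2$ is surjective and $\widetilde h$ commutes with the deck action of $\ZZ^2$, the map $\widetilde h:\RR^2\to\RR^2$ is itself surjective (given $\widetilde w\in\RR^2$, pick any preimage of $\bar w\in\TT^2$ under $h$, lift it to $\widetilde z_0\in\RR^2$, then adjust by an element of $\ZZ^2$ so that $\widetilde h(\widetilde z)=\widetilde w$ exactly). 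Therefore, from $v=\lim\rho_{n_k}(\widetilde\phi,\widetilde w_k)$ one selects $\widetilde z_k$ with $\widetilde h(\widetilde z_k)=\widetilde w_k$, and the same estimate gives $\rho_{n_k}(\widetilde f,\widetilde z_k)\to v$, hence $v\in\rho(\widetilde f)$.

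The proof is essentially a bounded-distance argument, with no real obstacle beyond organizing the approximations correctly and checking surjectivity of~$\widetilde h$; the last assertion that $(0,0)$ lies in the interior of $\rho(\widetilde\phi)$ is then an immediate restatement of the standing hypothesis on $\widetilde f$.
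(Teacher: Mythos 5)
Your proof is correct and follows essentially the same route as the paper: the key estimate $\left|\rho_n(\widetilde\phi,\widetilde h(x))-\rho_n(\widetilde f,x)\right|\leq 2d_1/n$, derived from the intertwining relation and the bounded distance of $\widetilde h$ to the identity, is exactly the paper's argument. Your explicit check that $\widetilde h$ is surjective is a welcome extra detail that the paper leaves implicit when it lets $x$ range over $\RR^2$ to compare the two rotation sets.
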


\begin{proof}
The proof uses from the conjugacy relation $\widetilde\phi\circ \widetilde h=\widetilde h\circ \widetilde f$, and the fact that $\widetilde h$ is at a finite uniform distance from the identity. For every point $x\in\R^2$ and every positive integer $n$, 
\begin{eqnarray*}
\left|\rho_n\left(\widetilde\phi,\widetilde h(x)\right)-\rho_n\left(\widetilde f,x\right)\right| & = & \left |\frac{1}{n}\left(\widetilde \phi^n\left(\widetilde h(x)\right)-\widetilde h  (x)\right)-\frac{1}{n}\left(\widetilde f^n(x)-x\right) \right|\\
& = & \frac{1}{n}\left|\left(\widetilde h\left(\widetilde f^n  (x)\right)-\widetilde f^n  (x)\right)-\left(\widetilde h(x)-x\right) \right|~\leq~\frac{2d_1}{n}.
\end{eqnarray*}
Letting $x$ range over $\RR^2$, we obtain that the Hausdorff distance between the sets $\rho_n(\widetilde\phi)$ and $\rho_n(\widetilde f)$ is less than $\frac{2d_1}{n}$. The lemma follows.
\end{proof}

Theorem~\ref{thm2} will appear as a consequence of the following lemma, together with proposition~\ref{prop.linking-implies-torsion} and lemma~\ref{lem3}:
 
\begin{lem}
\label{lem6}
Let $d$ be a positive constant. There exist two points $x,y \in \R^2$ such that
 \begin{enumerate}
 \item $x$ and $y$ are periodic points for $\widetilde \phi$,
 \item for every $t \in \R$, the distance $\mathrm{dist}(\widetilde\phi_t(x),\widetilde\phi_t(y))$ is bigger than $d$,
 \item the linking number $\mathrm{Linking}(\widetilde J,x,y)$ is not null (observe that this linking number is well-defined since the orbits of  $x$ and $y$ are periodic for $\widetilde\phi$).
 \end{enumerate}
 \end{lem}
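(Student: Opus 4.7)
The plan is as follows. Since $(0,0)$ lies in the interior of $\rho(\widetilde{\phi})$ by Lemma~\ref{lem5}, Franks' Lemma~\ref{lemFranks} applied with $(p,p')=(0,0)$ and $q=1$ immediately produces a fixed point $x\in\RR^2$ of $\widetilde{\phi}$, which is in particular periodic for $\widetilde{\phi}$ as required. The real work consists in constructing the second point $y$.

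The key observation guiding the construction is this: if both $x$ and $y$ are periodic for $\widetilde{\phi}$, say $\widetilde{\phi}^N(x)=x$ and $\widetilde{\phi}^N(y)=y$ for a common period $N$, then $\alpha(t):=\widetilde{\phi}_t(x)$ and $\beta(t):=\widetilde{\phi}_t(y)$ are closed loops in $\RR^2$ of period $N$, and the linking number $\mathrm{Linking}(\widetilde{J},x,y)$ equals $1/N$ times the winding number of $\beta-\alpha$ around the origin, or equivalently of $\widetilde{\phi}_t(y)$ around $x$. If instead $x$ and $y$ carried distinct rotation vectors for $\widetilde{\phi}$, the difference $\beta-\alpha$ would drift linearly in $\RR^2$ and the linking would necessarily be zero. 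The strict periodicity of $y$ for $\widetilde{\phi}$ (i.e., torus rotation vector $(0,0)$) is therefore essential.

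To produce $y$, I would enlarge the set of marked points to $E\cup\{\bar{x}\}$, where $\bar{x}=\pi(x)$, and exploit the pseudo-Anosov structure of $\phi$ relative to this enlarged set. Concretely, the strategy is to pass to the annular cover of $\TT^2\setminus\{\bar{x}\}$ that unwinds the loop around $\bar{x}$ and consider the induced lift of $\phi$ on this open annulus. For pseudo-Anosov $\phi$, this annular lift is expected to have a non-degenerate rotation interval (this is the main pseudo-Anosov input). A version of Franks' theorem for annulus homeomorphisms then yields periodic orbits for every rational rotation in the interior of this interval; by choosing this rotation so that the projected orbit on $\TT^2$ has rotation vector $(0,0)$ while the annular lift has non-zero rotation, one obtains a periodic point $\bar{y}$ of $\phi$ on $\TT^2\setminus\{\bar{x}\}$ whose lift $y\in\RR^2$ is periodic for $\widetilde{\phi}$ and whose loop $\widetilde{\phi}_t(y)$ winds non-trivially around $x$, giving condition~(3). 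Condition~(2) is then arranged by taking the period of $\bar{y}$ large enough that the loop $\widetilde{\phi}_t(y)$ has diameter far exceeding~$d$, or by translating $y$ by an integer vector of moderate size (small compared to the diameter of the loop, so that the winding number around $x$ is preserved) to gain the additional separation.

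The main obstacle in this plan is the pseudo-Anosov input in step three, namely justifying that the annular rotation interval around $\bar{x}$ is non-degenerate. This is a structural statement about pseudo-Anosov homeomorphisms, presumably following from the expansion of the unstable foliation at $\bar{x}$ together with Nielsen--Thurston theory, or alternatively from the positive topological entropy of $\phi$ combined with a Franks-type counting argument in the annular cover.
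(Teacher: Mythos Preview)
Your proposal and the paper's proof share only the first step (using Franks' lemma to obtain a fixed point $x$ of $\widetilde\phi$); after that they diverge completely, and your sketch has a genuine gap in condition~(2) that your suggested fixes do not close.

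The paper does not pass to an annular cover or invoke any rotation-interval black box. Instead it lifts a finite Markov partition $\cM$ for the pseudo-Anosov $\phi$ to a partition $\widetilde\cM$ of $\RR^2$ and, using only the transitivity of $\phi$ together with the fact that $(0,0)\in\mathrm{int}\,\rho(\widetilde\phi)$, builds for each large $n$ a closed $\widetilde\phi$-chain $\Gamma_n$ of rectangles of $\widetilde\cM$ whose associated periodic point $y_n$ has an orbit $t\mapsto\widetilde\phi_t(y_n)$ that shadows, within a fixed error $D$ independent of $n$, the boundary of an affine triangle $T_n$ whose sides have length proportional to $n$. Since $T_n$ scales while $D$ does not, for $n$ large the interior of $T_n$ contains an entire fundamental domain at distance $>D+d$ from $\partial T_n$; placing a translate $x_n$ of $x$ there gives simultaneously $\mathrm{dist}(\widetilde\phi_t(x_n),\widetilde\phi_t(y_n))>d$ for all $t$ (condition~(2)) and winding number $\pm 1$ of the $y_n$-loop around $x_n$ (condition~(3)). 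The entire argument is constructive and uses nothing about $\phi$ beyond transitivity and the existence of a Markov partition.

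The gap in your sketch is the treatment of condition~(2). Neither of your two fixes works. Taking the period of $y$ large forces the loop $t\mapsto\widetilde\phi_t(y)$ to have large diameter, but a loop of large diameter that winds around $x$ can still pass arbitrarily close to $x$; there is no lower bound on the inradius of the region it encloses. Translating $y$ (equivalently, replacing $x$ by another integer translate) helps only if the enclosed region already contains a ball of radius $>d$ centred at some point of $x+\ZZ^2$, and nothing in an abstract annular-rotation-interval argument provides that geometric fatness. The paper's triangle construction is exactly what supplies it: the \emph{shape} of the $y_n$-orbit is prescribed in advance, so one knows the enclosed region is fat enough.

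As for the obstacle you flag yourself (non-degeneracy of the annular rotation interval around $\bar x$): note that this is essentially equivalent to the existence of a periodic point of $\widetilde\phi$ whose orbit winds nontrivially around $x$, which is precisely what Lemma~\ref{lem6} asserts (minus the quantitative condition~(2)). So even granting your covering-space formulation, the pseudo-Anosov input you would need is of the same order as what the paper proves directly via the Markov chain construction; it is not a lighter black box.
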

 
 Let us postpone the proof of lemma~\ref{lem6}, and complete the proof of theorem~\ref{thm2} assuming that this lemma is true.
 
 \begin{proof}[Proof of theorem~\ref{thm2}, assuming lemma~\ref{lem6}]
 Since $\widetilde \phi_t$ commutes to the action of $\Z^2$ and depends continuously on $t$, we can find a constant $d_2$ with the following property:
 $$\mbox{if } \mathrm{dist}\left(x,y\right)\leq d_1, \mbox{ then }  \sup_{t\in [0,1]}(\mathrm{dist}\left(\widetilde\phi_t(x),\widetilde \phi_t(y)\right)\leq d_2 .$$
 Set $d:=2d_2$, and consider the points $x,y$ provided by lemma \ref{lem6}. Choose a point $x' \in \widetilde h^{-1}(\{x\})$ and a point $y' \in \widetilde h^{-1}(\{y\})$. Let $t \in \R$ and $\lfloor t\rfloor=n$. Since $\widetilde h\circ \widetilde f^n=\widetilde \phi^n \circ \widetilde h$ and $\widetilde h(x') = x$, one has
 $$\mathrm{dist}\left(\widetilde\phi^n(x),\widetilde f^n(x')\right) = 
\mathrm{dist}\left(\widetilde h \left(\widetilde f^n(x')\right),\widetilde f^n(x')\right)  \leq d_1.$$
Hence, 
$$\mathrm{dist}\left(\widetilde\phi_t(x),\widetilde f_t(x')\right) = \mathrm{dist}\left(\widetilde \phi_{t-n} \left(\widetilde \phi^n(x)\right), \widetilde f_{t-n}\left(\widetilde f^n(x')\right)\right)  \leq d_2 = \frac{d}{2}.$$ 
Similar arguments yield 
$$\mathrm{dist}\left(\widetilde\phi_t(y),\widetilde f_t(y')\right) \leq \frac{d}{2}.$$
Let $\alpha,\beta,\alpha',\beta':\R\to\R^2$ be the curves defined by $\alpha(t)=\widetilde\phi_t(x)$, $\beta(t)=\widetilde\phi_t(y)$,  $\alpha'(t)=\widetilde f^t(x')$ and $\beta'(t)=\widetilde f^t(y')$. The inequalities above yield
  $$\mathrm{dist}\left(\alpha(t),\alpha'(t)\right)\leq  \frac{d}{2}\;\; \mathrm{and}\;\;\mbox{dist}\left(\beta(t),\beta'(t)\right)\leq \frac{d}{2}\;\; \mbox{for all}\;\;t,$$ 
  and property~(2) of lemma \ref{lem6} yields
$$\mathrm{dist}(\alpha(t),\beta(t))>d,\;\;\mbox{for all}\;\;t\in\R.$$ 
Moreover, according to the item~(3) of lemma \ref{lem6}, the quantity $\mathrm{Linking}(\alpha, \beta)$ is non-zero. So lemma \ref{lem3} implies that $\mathrm{Linking}(\alpha', \beta')$ is also non-zero. Now observe that $\mathrm{Linking}(\alpha', \beta')=\mathrm{Linking}(\widetilde I, x, y)$. Hence, we have found two points $x,y\in\RR^2$ such that $\mathrm{Linking}(\widetilde I, x, y)$ is non-zero. According to proposition~\ref{prop.linking-implies-torsion}, this implies the existence of a point $z\in\R^2$ such that $\mathrm{Torsion}(\widetilde I,z)$ is non-zero. If $\bar z$ is the projection of $z$ in $\TT^2$, then we have $\mathrm{Torsion}(I,\bar z)=\mathrm{Torsion}(\widetilde I,z)\neq 0$. This completes the proof of theorem~\ref{thm2}.
\end{proof}

Now, we are left to prove lemma~\ref{lem6}. The main tool of the proof will be a Markov partition for the diffeomorphism $\widetilde\phi$. Let us recall some basic facts about Markov partitions.

Let $g$ be homeomorphism of a (non-necessarly compact) surface $S$. A \emph{rectangle} in $S$ is a topological embedding of $[0,1]^2$ in $S$. A \emph{Markov partition} for $g$ is a covering of $S$ by a locally finite collection of rectangles with pairwise disjoint interiors, such that, for every $i,j\in I$ $g(R_i)$ intersects $R_j$ in a certain way (we will not need the precise definition, but only some properties that we will be stated later; see e.g.~\cite{FLP} for a precise definition). Given a Markov partition $\cM=\{R_i\}_{i\in I}$, we call a \emph{$g$-chain of rectangles of $\cM$} a finite sequence $c=(R_{i_1},\dots,R_{i_n})$ of rectangles in $\cM$ such that $g(R_{i_k})$ intersects $R_{i_{k+1}}$ for $i=1,\dots,n-1$. The \emph{length} of a $g$-chain $c=(R_{i_1},\dots,R_{i_n})$ is $n-1$. A close $g$-chain of rectangles of $\cM$ is of course a $g$-chain $c=(R_{i_1},\dots,R_{i_n})$ such that $R_{i_n}=R_{i_1}$. If $x\in S$ is a periodic point of period $p$ for $g$, the \emph{$\cM$-itinerary} of $g$ is the closed $g$-chain of rectangles $c=(R_{i_0},\dots,R_{i_p}=R_{i_0})$ such that $g^k(x)\in R_{i_k}$ for $k=0\dots p$. We will use the following properties of Markov partitions:

\medskip

\begin{facts}[see for example~\cite{FLP}]
\label{f.Markov}~

\begin{enumerate}
\item  If $\cM=\{R_i\}_{i\in I}$ is a Markov partition for a surface homeomorphism $g$, then every closed $g$-chain of rectangles of~$\cM$ of length $p$ is the itinerary of a periodic point of period $p$.
\item Every pseudo-Anosov homeomorphism admits a (finite) Markov partition.
\item  Let $g$ be a homeomorphism of a surface $S$, and $\widetilde g$ be a lift of $g$ to the universal covering~$\widetilde S$ of~$S$. Let $\cM$ be a Markov partition for $g$, and $\widetilde\cM$ be the lift of $\cM$ in $\widetilde S$ (\emph{i.e.} the collection of all the lifts of the rectangles of $\cM$). Then $\widetilde\cM$ is a Markov partition for $\widetilde g$. 
\end{enumerate}
\end{facts}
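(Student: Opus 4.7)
My plan is to establish the three items in turn; facts~(1) and~(3) are routine consequences of the definition of a Markov partition, while fact~(2) is a deep classical theorem that I would quote from \cite{FLP} rather than reprove.

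For fact~(1), given a closed $g$-chain $c=(R_{i_0},\dots,R_{i_p}=R_{i_0})$, I would study the nested closed sets
$$A_k:=R_{i_0}\cap g^{-1}(R_{i_1})\cap\dots\cap g^{-k}(R_{i_k}),\qquad k=0,1,\dots,p.$$
The Markov property ensures, by induction on $k$, that each $A_k$ is a nonempty subrectangle of $R_{i_0}$ whose unstable fibers stretch fully across $R_{i_0}$ in the unstable direction and whose ``width'' in the stable direction shrinks geometrically. In particular $A_p\subset R_{i_0}\cap g^{-p}(R_{i_0})$ is nonempty and its unstable fibers still cross $R_{i_0}$ fully. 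To extract a genuine periodic point of period $p$ with itinerary $c$, I would apply a fixed-point argument along a stable fiber: $g^p$ maps an appropriate stable fiber through a point of $A_p$ strictly into itself and contracts it, yielding a unique point $x$ with $g^p(x)=x$ and $g^k(x)\in R_{i_k}$ for $k=0,\dots,p$.

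For fact~(3), let $\pi:\widetilde S\to S$ denote the covering map, and let $\widetilde\cM$ be the collection of all connected lifts $\widetilde R$ of all rectangles $R\in\cM$. Since $\pi$ is a local homeomorphism and each $R$ is simply connected, every such lift is a topological embedding of $[0,1]^2$ in $\widetilde S$. The collection $\widetilde\cM$ covers $\widetilde S$ (because $\cM$ covers $S$), and is locally finite (by local finiteness of $\cM$ together with proper discontinuity of the deck action); its elements have pairwise disjoint interiors since the projections of those interiors either coincide with disjoint interiors downstairs or else the two lifts differ by a nontrivial deck transformation. The Markov condition is local: for any two lifts $\widetilde R_i,\widetilde R_j\in\widetilde\cM$, the intersection $\widetilde g(\widetilde R_i)\cap\widetilde R_j$ is either empty or a single lift of the corresponding intersection $g(R_i)\cap R_j$ downstairs, and the rectangle-crossing pattern required by the Markov definition is inherited from $\cM$ because $\widetilde g$ is a lift of $g$.

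The main obstacle is fact~(2), for which I would rely on the classical construction of a Markov partition for a pseudo-Anosov homeomorphism $\phi$, as exposed in \cite{FLP}. Starting from the invariant stable and unstable measured foliations of $\phi$, one builds a finite family of rectangles bounded by stable and unstable leaf segments that cover the surface with disjoint interiors, and refines this family finitely many times until the Markov condition is achieved. The delicate technical point is the treatment of the finitely many singularities of the foliations, around which three or more rectangles must meet at prongs; for this I would quote \cite{FLP} rather than reproduce the construction.
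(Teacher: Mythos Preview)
The paper does not prove this statement at all: it is stated as a list of \emph{Facts} with the parenthetical ``see for example~\cite{FLP}'' and is simply quoted from the literature, with no argument given. Your proposal therefore goes well beyond what the paper does, by supplying sketches for items~(1) and~(3) and by correctly identifying item~(2) as a deep theorem to be cited from \cite{FLP}.

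Your sketches are essentially sound in the context in which the paper uses these facts. One caveat on item~(1): as stated in the paper, the fact is phrased for a general ``surface homeomorphism~$g$'', but your argument (nested subrectangles with shrinking stable width, contraction along a stable fiber) tacitly assumes the hyperbolic structure coming from a pseudo-Anosov map. This is consistent with the reference \cite{FLP} and with the only use the paper makes of Markov partitions (for the pseudo-Anosov $\phi$ and its lift $\widetilde\phi$), but you should be aware that the abstract combinatorial condition ``$g(R_{i_k})$ meets $R_{i_{k+1}}$'' alone, without the stable/unstable fiber structure implicit in the \cite{FLP} definition, would not suffice to produce a periodic point. For item~(3), your argument is the standard one and is adequate; the key observation, which you make, is that the Markov crossing condition is local and hence lifts through the covering map.
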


We are now ready to begin the proof of lemma~\ref{lem6}.

\begin{proof}[Proof of lemma~\ref{lem6}]
According to the second item of facts~\ref{f.Markov}, the pseudo-Anosov homeomorphism $\phi$ admits a finite Markov partition $\cM$. We denote by $\widetilde \cM$ the lift of this Markov partition in $\RR^2$ (\emph{i.e.} the collection of all the lifts of the rectangles of $\cM$). According to the facts stated above, $\widetilde\cM$ is a Markov partition for $\widetilde\phi$. Note that, if $\widetilde R$ is a rectangle in $\widetilde \cM$, and $(p,p')$ is a vector in $\ZZ^2$, then $\widetilde R+(p,p')$ is also a rectangle of $\widetilde\cM$.  

To prove the lemma \ref{lem6}, we will construct a closed $\widetilde\phi$-chain $\Gamma$ of rectangles of $\widetilde\cM$ with the following properties: one can find a fundamental domain $\Delta$ for the action of $\Z^2$ on $\RR^2$, such that ``$\Gamma$ makes one turn around $\Delta$" and ``$\Gamma$ stays far away from $\Delta$". 
 The closed chain $\Gamma$  will be the itinerary of a periodic point $y$ of $\widetilde\phi$. Franks'lemma \ref{lemFranks} will provide us with a fixed point $x$ of $\widetilde\phi$ in $\Delta$. The property `$\Gamma$ makes one turn around $\Delta$" will imply that the orbits of $x$ and $y$ will have non-zero linking number (item~(3) of lemma~\ref{lem6}). The property ``$\Gamma$ stays far away from $\Delta$" will imply that the orbits of $x$ and $y$ will stay far from each other ((item~(2) of lemma~\ref{lem6}). Let us start  the construction of the closed $\widetilde\phi$-chain $\Gamma$:

\setcounter{claim}{0}
\begin{claim} 
\label{Affirm3}
There exists a constant $K\in \N$ such that for every couple of rectangles $(\widetilde S,\widetilde T)$ in $\widetilde \cM$, there exists a vector $(p,p')\in\ZZ^2$ and a finite $\widetilde \phi$-chain of rectangles of $\widetilde\cM$ of length smaller than $K$ going from $\widetilde S$ to $\widetilde T+(p,p')$.
\end{claim}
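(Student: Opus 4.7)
The plan is to transport the claim from the torus down to the cover by a lifting argument, using that $\cM$ is finite and that $\phi$ is pseudo-Anosov (hence topologically mixing).

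First, I would work on $\TT^2$. Since $\phi$ is pseudo-Anosov, it is topologically mixing, so for any two rectangles $R,R'\in\cM$ there exists an integer $n\geq 0$ and a $\phi$-chain of rectangles of $\cM$ going from $R$ to $R'$: simply note that for $n$ large enough $\phi^n(\mathrm{int}(R))\cap \mathrm{int}(R')\neq\emptyset$, and the image $\phi^k(R)$ for $0\leq k\leq n$ meets at least one rectangle of the partition at each step, which produces the desired chain. Because $\cM$ is finite, there is a uniform bound $K\in\NN$ such that between any ordered pair $(R,R')\in\cM\times\cM$ one can find such a chain of length at most $K$.

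Next, I would lift such chains to $\widetilde\cM$. Given $\widetilde S,\widetilde T\in\widetilde\cM$, let $R,R'\in\cM$ be their projections, and let $(R=R_{i_0},R_{i_1},\ldots,R_{i_n}=R')$ be a $\phi$-chain in $\cM$ of length $n\leq K$. I would build a lifted chain $(\widetilde R_{i_0},\widetilde R_{i_1},\ldots,\widetilde R_{i_n})$ in $\widetilde\cM$ inductively, by setting $\widetilde R_{i_0}:=\widetilde S$ and, at each step, choosing $\widetilde R_{i_{k+1}}$ to be the (unique) lift of $R_{i_{k+1}}$ such that $\widetilde \phi(\widetilde R_{i_k})\cap \widetilde R_{i_{k+1}}\neq\emptyset$. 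Such a lift exists and is unique because $\phi(R_{i_k})\cap R_{i_{k+1}}\neq\emptyset$ pulls back, via the covering, to a non-empty intersection of $\widetilde\phi(\widetilde R_{i_k})$ with exactly one $\ZZ^2$-translate of any fixed lift of $R_{i_{k+1}}$ (here we use that $\widetilde\phi$ commutes with $\ZZ^2$ and projects to $\phi$, together with the fact that rectangles of $\widetilde\cM$ are topological discs of bounded diameter, so at most one translate can meet a given connected piece).

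Finally, the terminal rectangle $\widetilde R_{i_n}$ is a lift of $R_{i_n}=R'$, hence equals $\widetilde T+(p,p')$ for some $(p,p')\in\ZZ^2$. By fact~\ref{f.Markov}(3), $\widetilde\cM$ is a Markov partition for $\widetilde\phi$, so the sequence we constructed is a genuine $\widetilde\phi$-chain in $\widetilde\cM$; it has length $n\leq K$, which proves the claim. The only mild subtlety is the uniqueness of the lift at each inductive step, which is what guarantees that our construction does not branch and that the final rectangle is an honest $\ZZ^2$-translate of $\widetilde T$; everything else is a direct consequence of the mixing property of $\phi$ and the finiteness of $\cM$.
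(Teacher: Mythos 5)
Your argument is correct and follows essentially the same route as the paper: transitivity of the pseudo-Anosov map gives, for each pair of rectangles of the finite partition $\cM$, an iterate of one meeting the other; finiteness of $\cM$ gives the uniform bound $K$; and the resulting chain is lifted to $\widetilde\cM$. One small caveat: the uniqueness of the lift at each inductive step is neither true in general (the image $\widetilde\phi(\widetilde R_{i_k})$ can be very long in the unstable direction and meet several $\ZZ^2$-translates of the same rectangle) nor needed --- any choice of lift works, since the terminal rectangle is in any case a lift of $R'$ and hence automatically of the form $\widetilde T+(p,p')$.
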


\begin{proof}
We recall that a pseudo-Anosov homeomorphism is always transitive (see e.g.~\cite{FLP}). The topological transitivity of $\phi$ implies that, for every couple of rectangles $S,T$ of $\cM$, there exists an integer $k_{ST}$ such that $\phi^{k_{ST}}(S)$ intersects $T$. Hence, for any lift $\widetilde S$ of $S$ and any lift $\widetilde T$ of $T$, there is a vector $(p,p')\in\ZZ^2$ such that $\widetilde\phi^{k_{ST}}(\widetilde S)$ intersects $\widetilde T+(p,p')$. In particular, there is a finite $\widetilde \phi$-chain of rectangles of $\widetilde\cM$ of length smaller than $k_{ST}$ joining the rectangle $\widetilde S$ to the rectangle $\widetilde T+(p,p')$. Since $\cM$ has a finite number of rectangles ,  the integers $k_{ST}$ are uniformly bounded. This completes the proof of the claim.
\end{proof}

We pick a rectangle $\widetilde R$ of $\widetilde\cM$. 

\begin{claim} 
\label{Affirm4}
There exist three vectors $(p_1,p_1'),\; (p_2,p_2'),\;(p_3,p_3')$ in $\ZZ^2$ such that:
\begin{enumerate}
\item $(0,0)$ is in the interior of the convex hull of the vectors $(p_1,p_1'),\; (p_2,p_2'),\;(p_3,p_3')$,
\item for $i \in \{1,2,3\}$, there exists a finite $\widetilde\phi$-chain of rectangles of  $\widetilde\cM$, denoted $c_i$, joining $\widetilde R$ to a translate $\widetilde R+(p_i,p_i')$ of $\widetilde R$. We denote $k_i$ the length of the chain $c_i$.
\end{enumerate}
\end{claim}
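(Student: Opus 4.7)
The plan is to build, for each $i$, the desired $\widetilde\phi$-chain as a concatenation of three pieces: a short \emph{opening} chain from $\widetilde R$ to a rectangle near a well-chosen periodic orbit of $\widetilde\phi$ (obtained via Claim~\ref{Affirm3}); a long \emph{middle} piece consisting of many repetitions of the Markov itinerary of that periodic orbit; and a short \emph{closing} chain back to a translate of $\widetilde R$ (again via Claim~\ref{Affirm3}). Iterating the middle part many times will ensure that the orbital contribution to the total translation $(p_i,p_i')$ dominates the $O(1)$ error introduced by the two short chains, so that the convex hull of the three resulting vectors still strictly contains $(0,0)$.

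First, using Lemma~\ref{lem5} together with the density of $\QQ^2$, I would select three rational vectors $u_i=(a_i/q_i,a_i'/q_i)\in\mathrm{int}\,\rho(\widetilde\phi)$, $i=1,2,3$, such that $(0,0)$ lies in the interior of the triangle $\mathrm{conv}(u_1,u_2,u_3)$. Franks' Lemma~\ref{lemFranks}, applied to $\widetilde\phi$, then produces periodic points $z_i$ of period $q_i$ satisfying $\widetilde\phi^{q_i}(z_i)=z_i+(a_i,a_i')$. The Markov itinerary of $z_i$ under $\widetilde\phi$ is a $\widetilde\phi$-chain $\gamma_i$ in $\widetilde\cM$ of length $q_i$, which by $\ZZ^2$-invariance of $\widetilde\cM$ we may take to run from some rectangle $\widetilde S_i$ to $\widetilde S_i+(a_i,a_i')$.

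For a parameter $N\in\NN$ to be fixed later, Claim~\ref{Affirm3} provides a $\widetilde\phi$-chain of length at most $K$ from $\widetilde R$ to some translate $\widetilde S_i+(b_i,b_i')$. Concatenating $N$ translated copies of $\gamma_i$ (which is legitimate since $\widetilde\phi$ is $\ZZ^2$-equivariant) yields a chain from $\widetilde S_i+(b_i,b_i')$ to $\widetilde S_i+(b_i,b_i')+N(a_i,a_i')$, and a second application of Claim~\ref{Affirm3} closes this into a chain reaching some $\widetilde R+(p_i,p_i')$. Using that $\widetilde\phi$ lies at uniformly bounded distance from the identity (by $\ZZ^2$-equivariance and compactness of $\TT^2$) and that $\widetilde\cM$ has rectangles of uniformly bounded diameter, the total translation performed by any $\widetilde\phi$-chain of length at most $K$ is bounded by a constant $C$ depending only on $K$, $\widetilde\phi$ and $\widetilde\cM$. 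Hence $(p_i,p_i')=N(a_i,a_i')+\eta_i$ with $\|\eta_i\|\le 2C$ uniformly in $N$ and $i$.

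Finally, since positive rescaling of the vertices preserves the strict containment of $(0,0)$ in the interior of a triangle, $(0,0)$ also lies in the interior of $\mathrm{conv}((a_1,a_1'),(a_2,a_2'),(a_3,a_3'))$, and by linear scaling its distance to the boundary of $\mathrm{conv}(N(a_1,a_1'),N(a_2,a_2'),N(a_3,a_3'))$ grows like $Nr$ for some $r>0$ independent of $N$, while $\|\eta_i\|\le 2C$. For $N$ large enough, $(0,0)$ therefore remains in the interior of $\mathrm{conv}((p_1,p_1'),(p_2,p_2'),(p_3,p_3'))$, which yields the required vectors and chains. The main obstacle will be the control of the error $\eta_i$: the bookkeeping from the opening and closing chains must stay $O(1)$ while the middle part grows linearly in $N$, which is exactly where both the finiteness of $\cM$ and the $\ZZ^2$-equivariance of $\widetilde\phi$ are essential.
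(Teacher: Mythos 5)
Your argument is correct, and its overall architecture is the same as the paper's: a short opening chain from $\widetilde R$ supplied by Claim~\ref{Affirm3}, a long middle piece whose net displacement grows linearly, a short closing chain back to a translate of $\widetilde R$, and a perturbation argument showing that the $O(1)$ errors from the two short pieces cannot dislodge $(0,0)$ from the interior of the convex hull. The genuine difference lies in how the middle piece is produced. You invoke Franks' Lemma~\ref{lemFranks} to get exact periodic orbits of $\widetilde\phi$ with rational rotation vectors and then concatenate $N$ translated copies of their Markov itineraries, so the middle displacement is exactly $N(a_i,a_i')$ and the error $\eta_i$ is bounded independently of $N$. The paper does not use Franks' lemma at this step: for each $v_i$ it takes an arbitrary point $z_i$ and a single large time $n_i$ with $\frac{1}{n_i}(\widetilde\phi^{n_i}(z_i)-z_i)$ within $\eta/3$ of $v_i$ (directly from the definition of the rotation set), uses the itinerary of that finite orbit segment as the middle piece, and checks $\|\frac{1}{n_i}(p_i,p_i')-v_i\|<\eta$, where $\eta$ is fixed in advance using the openness of the condition ``$(0,0)$ lies in the interior of the convex hull of $(w_1,w_2,w_3)$''. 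Your route buys a cleaner error analysis (no $\eta/3$ bookkeeping, exact periodicity of the middle block); the paper's route is more economical in its inputs, needing only the definition of the rotation set here and reserving Franks' lemma for producing the fixed point $x_n$ later. Two minor points worth making explicit in your write-up: when you replace $u_i=(a_i/q_i,a_i'/q_i)$ by $(a_i,a_i')$ you rescale each vertex by a \emph{different} positive factor, which still preserves ``$(0,0)$ in the interior'' (rewrite the strictly positive barycentric combination with weights $\lambda_i/q_i$ and renormalize, noting the rescaled triple remains affinely independent); and the final ``for $N$ large enough'' step is most cleanly justified by dividing by $N$ and appealing to the openness of the interior-of-convex-hull condition, which is exactly the role of the paper's constant $\eta$.
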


\begin{proof} 
Let $K$ be the constant given by claim~\ref{Affirm3}. Since $\widetilde\phi$ is at finite uniform distance from the identity, there exists a constant $L$ with the following property:  if $\widetilde S$ and $\widetilde T$ are two rectangles of $\widetilde\cM$ such that there is a $\widetilde \phi$-chain of rectangles of length smaller than $K$ going from  $\widetilde S$ to  $\widetilde T$, then $\mathrm{dist}(x,y)\leq L$ for every $x\in\widetilde S$ and every $y\in\widetilde T$. Since $(0,0)$ is in the interior of the rotation set of $\widetilde\phi$, we can find three vectors $v_1,v_2,v_3$ in the rotation set of $\widetilde\phi$ such that $(0,0)$ is in the interior of the convex hull of $v_1,v_2,v_3.$ Then, we can find a constant $\eta > 0$ such that if $w_1,w_2,w_3$ are vectors in $\R^2$ such that $\|w_i-v_i\| < \eta$ for $i=1,2,3$, then $(0,0)$ is in the interior of the convex hull of $w_1,w_2,w_3$. 

Let $N$ be an integer such that $\frac{L}{N}<\frac{\eta}{3}$. For $i=1,2,3,$ since $v_i$ is in the rotation set of $\widetilde\phi$, we can find a point $z_i \in \R^2$ and an integer $n_i\geq N$ such that 
\begin{equation}
\label{e.presque-v_i}
\left\|\frac{1}{n_i}\left(\widetilde{\phi}^{n_i}(z_i)-z_i\right)-v_i\right\| < \frac{\eta}{3}.
\end{equation}
Let $\widetilde S_i$ and $\widetilde T_i$ be the rectangles of $\widetilde\cM$ containing respectively the points $z_i$ and $\widetilde\phi^{n_i}(z_i)$. Now we construct the $\widetilde\phi$-chain of rectangles $c_i$. 
\begin{itemize}
\item[--] Claim~1 provides us with a vector $(r_i,r_i')$ in $\ZZ^2$ and a $\widetilde\phi$-chain of rectangles of $\widetilde\cM$ of length less than $K$ going from $\widetilde R$ to $\widetilde S_i+(r_i,r_i')$. 
\item[--] By definition of the rectangles $\widetilde S_i$ and $\widetilde T_i$, and since $\widetilde\phi$ commutes with the action of $\ZZ^2$, the image under $\widetilde\phi^{n_i}$ of the rectangle $\widetilde S_i+(r_i,r_i')$ intersects the rectangle $\widetilde T_i+(r_i,r_i')$. In particular, there is a $\widetilde\phi$-chain of rectangles of $\widetilde\cM$ going from $\widetilde S_i+(r_i,r_i')$ to $\widetilde T_i+(r_i,r_i')$. 
\item[--] Claim~1 provides us with a vector $(p_i,p_i')$ in $\ZZ^2$ and a $\widetilde\phi$-chain of rectangles of $\widetilde\cM$ of length less than $K$ going from $\widetilde T_i+(r_i,r_i')$ to $\widetilde R+(p_i,p_i')$. 
\end{itemize}
Concatenating these three $\widetilde\phi$-chains of rectangles of $\widetilde\cM$, we obtain a $\widetilde\phi$-chain of rectangles of $\widetilde\cM$ going from $\widetilde R$ to $\widetilde R+(p_i,p_i')$, which we denote by $c_i$.

We will now prove that $\|\frac{1}{n_i}(p_i,p_i')-v_i\| < \eta$. Fix a point $z$ in $\widetilde R$. Since there exists a $\widetilde\phi$-chain of rectangles of $\widetilde\cM$ of length less than $K$ going from $\widetilde R$ to $\widetilde S_i+(r_i,r_i')$, we have $\|z-(z_i+(r_i,r_i'))\|\leq L.$ The same arguments yield $\|\widetilde\phi^{n_i}(z_i+(r_i,r_i'))-(z+(p_i,p_i'))\|\leq L.$ So, we get
\begin{eqnarray*}
&& \Big\|\frac{1}{n_i}(p_i,p_i')-v_i\Big\| = \Big\|\frac{1}{n_i}\left(\left(z+(p_i,p_i')\right)-z\right)-v_i\Big\|\\
& \leq &\frac{1}{n_i} \left\| (z+(p_i,p_i'))-\widetilde\phi^{n_i}(z_i+(r_i,r_i')) \right\| + \Big\| \frac{1}{n_i} \left(\widetilde{\phi}^{n_i}(z_i+(r_i,r_i'))-(z_i+(r_i,r_i'))\right)-v_i\Big\| \\
&&+ \frac{1}{n_i}\Big\|\left(z_i+(r_i,r_i')\right)-z\Big\|\\
& \leq & \frac{L}{n_i}+\frac{\eta}{3}+\frac{L}{n_i} \;\leq\;\eta. 
\end{eqnarray*}

By definition of $\eta$, these inequalities imply that $(0,0)$ is in the interior of the convex hull of the vectors $\frac{1}{n_1}(p_1,p_1'),\; \frac{1}{n_2}(p_2,p_2'),\; \frac{1}{n_3}(p_3,p_3')$. Therefore $(0,0)$ is also in the interior of the convex hull of the vectors $(p_1,p_1'),\; (p_2,p_2'),\;(p_3,p_3')$.
\end{proof}

We will now construct closed $\widetilde\phi$-chains of rectangles of $\widetilde\cM$.  If $c=(\widetilde R_1,\dots,\widetilde R_n)$ is a $\widetilde\phi$-chain of rectangles of $\widetilde\cM$, and $(p,p')$ is a vector in $\ZZ^2$, we denote by $c+(p,p')$ the sequence of rectangles $(\widetilde R_1+(p,p'),\dots,\widetilde R_n+(p,p'))$. Observe that $c+(p,p')$ is a $\widetilde\phi$-chain of rectangles of $\widetilde\cM$, since $\widetilde\phi$ commutes with the action of $\ZZ^2$. If $c$ and $c'$ are two $\widetilde\phi$-chains of rectangles of $\widetilde\cM$ such that the last rectangles of $c$ is equal to the first rectangle of $c'$, we denote by $c\vee c'$ the $\widetilde\phi$-chains of rectangles obtained by concataneting $c$ and $c'$. 

Since $(p_1,p_1')$, $(p_2,p_2')$, $(p_3,p_3')$ are vectors with integral coordinates, and since $(0,0)$ is in the interior of the convex hull of these vectors, there exists three positive integers $\ell_1,\ell_2,\ell_3$ such that
$$\ell_1(p_1,p_1')+\ell_2(p_2,p_2')+\ell_3(p_3,p_3')=(0,0).$$
For $i=1,2,3$, and $n\in \NN\setminus\{0\}$, we consider the $\widetilde\phi$-chain of rectangles of $\widetilde\cM$  defined as follows:
$$\Gamma_{i,n}=c_i \vee \Big(c_i+(p_i,p_i')\Big) \vee \Big(c_i+2(p_i,p_i')\Big) \vee\dots\vee \Big(c_i+(n\ell_i-1)(p_i,p_i')\Big).$$
This is a chain of length $n\ell_ik_i$, joining the rectangle $\widetilde R$ to the rectangle $\widetilde R+n\ell_i(p_i,p_i')$. Now, for $n\in\NN\setminus\{0\}$, we consider the $\widetilde\phi$-chain of rectangles of $\widetilde\cM$ defined as follows:
$$\Gamma_n:=\Gamma_{1,n}\vee \Big(\Gamma_{2,n}+n\ell_1(p_1,p_1')\Big)\vee\Big(\Gamma_{3,n}+n\ell_1(p_1,p_1')+n\ell_2(p_2,p_2')\Big).$$
This is a closed $\widetilde\phi$-chain of rectangles of $\widetilde\cM$, starting and ending at $\widetilde R$. We denote by $p_n=n(\ell_1k_1+\ell_2k_2+\ell_3k_3)$ the length of this $\widetilde\phi$-chain $\Gamma_n$.

For every $n\in\NN\setminus\{0\}$, since $\Gamma_n$ is a closed $\widetilde\phi$-chain of rectangles of $\widetilde\cM$ of length $p_n$, and since $\widetilde\cM$ is a Markov partition for $\widetilde\phi$, there exists a point $y_n\in\widetilde R$ which is periodic of period $p_n$ for $\widetilde\phi$, and such that the itinerary of the orbit of $y_n$ is precisely $\Gamma_n$. We will prove that the orbit of $y_n$ is very close from an affine triangle. We pick a point $\widetilde y$ in the rectangle $\widetilde R$. For every $n\in\NN\setminus\{0\}$, we consider the triangle 
$$T_n:=\mathrm{Conv}\left(y , y+n\ell_1(p_1,p_1') , y+n\left(\ell_1(p_1,p_1')+\ell_2(p_2,p_2'\right)\right).$$
Recall that $p_n=n(\ell_1k_1+\ell_2k_2+\ell_3k_3)$. We consider the parametrisation $\tau_n:[0,p_n]\to\partial T_n$ of the boundary of $T_n$ defined by the following properties:
\begin{itemize}
\item[--] $\tau_n$, is affine on the interval $[0,n\ell_1k_1]$, $[n\ell_1k_1,n(\ell_1k_1+\ell_2k_2)]$ and $[n(\ell_1k_1+\ell_2k_2),p_n]$, 
\item[--] $\tau_n(0)=y$,  $\tau_n(n\ell_1k_1)=y+n\ell_1(p_1,p_1')$, $\tau_n\left(n(\ell_1k_1+\ell_2k_2)\right)=y+n\left(\ell_1(p_1,p_1')+\ell_2(p_2,p_2')\right)$ and $\tau_n(p_n)=y$. 
\end{itemize}
The trajectory of $t\mapsto \widetilde\phi_t(y_n)$ and the triangle $T_n$ are depicted on figure~\ref{f.triangle}. The following claim states that the distance from the trajectory of $t\mapsto \widetilde\phi_t(y_n)$ to the boundary of the triangle $T_n$ is bounded independently of $n$.

\begin{claim} 
\label{claim-3}
There exists $D$ such that $\mathrm{dist}\left(\widetilde\phi_t(y_n) ,\tau_n(t) \right)\leq D$ for every $t\in [0,p_n]$ and every $n\in\NN$.
\end{claim}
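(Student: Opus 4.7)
The plan is to follow the orbit $\bigl(\widetilde\phi^k(y_n)\bigr)_{k=0,\dots,p_n}$ rectangle by rectangle along the closed chain $\Gamma_n$, and to compare it with the polygonal curve $\tau_n$. Three uniform sources of boundedness will do all the work. First, because $\cM$ is a finite Markov partition and $\widetilde\cM$ consists of its $\ZZ^2$-translates, there is a constant $\delta>0$ such that every rectangle of $\widetilde\cM$ has diameter at most $\delta$. Second, since each $\widetilde\phi_s$ commutes with the $\ZZ^2$-action and $s\mapsto\widetilde\phi_s$ is continuous on the compact interval $[0,1]$, the quantity $M:=\sup_{s\in[0,1]}\|\widetilde\phi_s-\mathrm{Id}\|_\infty$ is finite. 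Third, Claim~\ref{Affirm4} provides fixed constants $K:=\max(k_1,k_2,k_3)$ and $P:=\max_i\|(p_i,p_i')\|$, which depend only on the chains $c_1,c_2,c_3$.

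Next I would verify the claim at the ``anchor'' integer times of $\Gamma_n$, namely the times at which a new copy of some $c_i$ begins. At such a time $k\in\NN$, the definition of $\Gamma_n$ and of the itinerary of $y_n$ shows that $\widetilde\phi^k(y_n)$ lies in a rectangle $\widetilde R+v\in\widetilde\cM$ for an explicit integer vector $v$ computable from the three phases of $\Gamma_n$. Reading off the piecewise-affine parametrisation $\tau_n$ from the same data yields $\tau_n(k)=y+v$, which also lies in $\widetilde R+v$. Hence $\mathrm{dist}\bigl(\widetilde\phi^k(y_n),\tau_n(k)\bigr)\le\delta$ at every anchor.

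I would then extend the estimate to an arbitrary integer $k\in[0,p_n]$ by choosing the largest anchor $k'\le k$: by construction $k-k'\le K$, so the iterated triangle inequality yields
$$\mathrm{dist}\bigl(\widetilde\phi^k(y_n),\widetilde\phi^{k'}(y_n)\bigr)\le (k-k')M\le KM,$$
while the affine structure of $\tau_n$ on each of its three segments gives $\|\tau_n(k)-\tau_n(k')\|\le (k-k')\|(p_i,p_i')\|/k_i\le P$. Combined with the anchor estimate, this bounds $\mathrm{dist}(\widetilde\phi^k(y_n),\tau_n(k))$ by $\delta+KM+P$. Finally, for arbitrary real $t\in[0,p_n]$, using $\widetilde\phi_t=\widetilde\phi_{t-\lfloor t\rfloor}\circ\widetilde\phi^{\lfloor t\rfloor}$ and the $\ZZ^2$-periodicity of $\widetilde\phi_{t-\lfloor t\rfloor}-\mathrm{Id}$ controls $\|\widetilde\phi_t(y_n)-\widetilde\phi^{\lfloor t\rfloor}(y_n)\|$ by $M$, and the analogous linear-interpolation bound gives $\|\tau_n(t)-\tau_n(\lfloor t\rfloor)\|\le P$. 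Choosing $D:=\delta+(K+1)M+2P$ then works uniformly in $n$ and $t$. The only genuine obstacle is the bookkeeping needed to keep track of which of the three phases of $\Gamma_n$ contains $k$ and of the corresponding integer shift $v\in\ZZ^2$; once this is organised, no analytic difficulty arises.
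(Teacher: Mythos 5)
Your proof is correct and follows essentially the same route as the paper's: both anchor the comparison at the integer times where a new copy of $c_i$ begins, where the itinerary forces $\widetilde\phi^{k}(y_n)$ into the translate $\widetilde R+v$ that also contains $\tau_n(k)=y+v$, and then bound the deviation over one period of $c_i$ by a constant independent of $n$. The only difference is bookkeeping: the paper packages the between-anchor estimate as a single compactness supremum $D_i=\sup_{z\in\widetilde R,\,t\in[0,k_i]}\mathrm{dist}\big(\widetilde\phi_t(z),\,y+\tfrac{t}{k_i}(p_i,p_i')\big)$, whereas you split it into the rectangle diameter $\delta$, the displacement bound $KM$, and the bound $P$ on the motion of $\tau_n$.
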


\begin{proof}
Let us first observe that, for $t\in [0,n\ell_1k_1]$, one has $\tau_n(t)=y+\frac{t}{k_1}(p_1,p_1')$. Now consider the constant
$$D_1:=\sup_{z\in\widetilde R\,,\,t\in [0,k_1]} \mathrm{dist}\left(\widetilde\phi_t(z) \;,\; \tau_n(t)\right)=\sup_{z\in\widetilde R\,,\,t\in [0,k_1]} \mathrm{dist}\left(\widetilde\phi_t(z) \;,\; y+\frac{t}{k_1}(p_1,p_1')\right).$$
For every $n\in\NN\setminus\{0\}$ and every $j\in \{0,\dots,n\ell_1-1\}$, the point $\widetilde\phi_{jk_1}(y_n)-j(p_1,p_1')$ is in the rectangle $\widetilde R$. So, for every $n\in\NN\setminus\{0\}$, every $j\in \{0,\dots,n\ell_1-1\}$ and every $t\in [0,k_1]$, one has 
 $$\mathrm{dist}\left(\widetilde\phi_t\left(\widetilde\phi_{jk_1}(y_n)-j(p_1,p_1')\right) \;,\; y+\frac{t}{k_1}(p_1,p_1')\right) \leq D_1,$$
or equivalently
 $$\mathrm{dist}\left(\widetilde\phi_{jk_1+t}(y_n) \;,\; \tau_n(jk_1+t)\right) = \mathrm{dist}\left(\widetilde\phi_{jk_1+t}(y_n) \;,\; y+\left(j+\frac{t}{k_1}\right)(p_1,p_1')\right) \leq D_1.$$
 This shows that the distance between the points $\widetilde\phi_t(y_n)$ and $\tau_n(t)$ is bounded from above by $D_1$, for every $n\in\NN\setminus\{0\}$ and for every $t\in [0,n\ell_1k_1]$. Similar arguments show the existence of some constants $D_2$ and $D_3$, such that the distance between the points $\widetilde\phi_t(y_n)$ and $\tau_n(t)$ is bounded from above by $D_2$ and $D_3$, respectively for $t\in [n\ell_1k_1,n\ell_1k_1+n\ell_2k_2]$ and for $t\in [n\ell_1k_1+n\ell_2k_2,p_n]$. This completes the proof of claim~\ref{claim-3}.
\end{proof}

\begin{claim} 
\label{claim-4}
For $n$ large enough, there exists a fixed point $x_n$ of $\widetilde\phi$ such that, for every $t\in\RR$,
$$\phi_t(x_n)\in T_n\quad\mbox{and}\quad\mathrm{dist}\left(\phi_t(x_n)\;,\;\partial T_n\right) > D+d.$$
\end{claim}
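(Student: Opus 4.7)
My plan is to take $x_n$ to be a suitable $\ZZ^2$-translate of a single fixed point of $\widetilde\phi$, chosen so that the whole isotopy-orbit of $x_n$ sits deep inside $T_n$. The key observation is that $\widetilde\phi$ commutes with the $\ZZ^2$-action, so the whole lattice of translates of one fixed point is available; the linear growth of the triangles $T_n$ then guarantees that at least one of these translates works.

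To produce the fixed point, I invoke lemma~\ref{lem5}, which states that $(0,0)$ is interior to $\rho(\widetilde\phi)$, and apply Franks's lemma~\ref{lemFranks} with $(p,p')=(0,0)$ and $q=1$ to obtain $z_0\in\RR^2$ with $\widetilde\phi(z_0)=z_0$. I set
$$M:=\sup_{t\in[0,1]}\|\widetilde\phi_t(z_0)-z_0\|,$$
which is finite by continuity of $t\mapsto\widetilde\phi_t(z_0)$ on $[0,1]$. Because each $\widetilde\phi_t$ commutes with the $\ZZ^2$-action, every translate $z_0+(p,p')$ with $(p,p')\in\ZZ^2$ is again a fixed point of $\widetilde\phi$, and the whole loop $t\mapsto\widetilde\phi_t(z_0+(p,p'))=\widetilde\phi_t(z_0)+(p,p')$ is contained in the closed ball of radius $M$ around $z_0+(p,p')$ (here $t\in\RR$ is handled by $\widetilde\phi_t=\widetilde\phi_{t-\lfloor t\rfloor}\circ\widetilde\phi^{\lfloor t\rfloor}$ and the fact that $\widetilde\phi^n$ fixes the translate).

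Next I check that the inradius of $T_n$ tends to $+\infty$. By construction $T_n=y+n\cdot T^\ast$, where $T^\ast$ is the fixed triangle with vertices $0$, $\ell_1(p_1,p_1')$, $\ell_1(p_1,p_1')+\ell_2(p_2,p_2')$. Claim~\ref{Affirm4} places $(0,0)$ strictly inside the convex hull of $(p_1,p_1'),(p_2,p_2'),(p_3,p_3')$; this forces no two of these three vectors to be collinear (otherwise their convex hull would lie in a line and could not contain $(0,0)$ in its interior), so in particular $\ell_1(p_1,p_1')$ and $\ell_2(p_2,p_2')$ are linearly independent, and $T^\ast$ has positive inradius $r>0$. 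Hence $\mathrm{inrad}(T_n)=nr\to+\infty$.

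Finally, for $n$ large enough I pick a point $w_n\in T_n$ at distance strictly greater than $D+d+M+\tfrac{\sqrt 2}{2}$ from $\partial T_n$. Since the covering radius of $\ZZ^2$ is $\tfrac{\sqrt 2}{2}$, there exists $(p_n,p_n')\in\ZZ^2$ with $\|w_n-(z_0+(p_n,p_n'))\|\leq \tfrac{\sqrt 2}{2}$. Setting $x_n:=z_0+(p_n,p_n')$ yields a fixed point of $\widetilde\phi$ at distance strictly greater than $D+d+M$ from $\partial T_n$, and the triangle inequality shows that the entire orbit $\{\widetilde\phi_t(x_n):t\in\RR\}$, being contained in $\overline B(x_n,M)$, lies in $T_n$ at distance strictly greater than $D+d$ from $\partial T_n$. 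I don't expect any serious obstacle: the only mildly delicate point is the non-degeneracy of $T^\ast$, which is handled as above using the strict convex-interiority of $(0,0)$ in claim~\ref{Affirm4}.
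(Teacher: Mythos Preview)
Your proof is correct and follows essentially the same route as the paper's: obtain a fixed point of $\widetilde\phi$ via Franks's lemma (using that $(0,0)\in\mathrm{int}\,\rho(\widetilde\phi)$), use the $\ZZ^2$-equivariance to get a whole lattice of fixed points, and place one of these deep inside $T_n$ using the linear growth of the triangles. The only cosmetic differences are that the paper uses the uniform constant $d_3=\sup_{t\in[0,1]}\|\widetilde\phi_t-\mathrm{Id}\|_\infty$ in place of your point-dependent $M$, and phrases the placement step as fitting a fundamental domain $\Delta_n$ inside $T_n$ rather than choosing a lattice point near a deep $w_n$; your explicit justification that $T^\ast$ is non-degenerate is in fact more careful than the paper's.
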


\begin{proof}
Consider the real constant $d_3$ defined as follow:
$$d_3:=\sup_{t\in [0,1]} \left\|\widetilde\phi_t-\mathrm{Id} \right\|_\infty  = \sup_{z\in\RR^2}\sup_{t\inÊ[0,1]} \mathrm{dist}_{\RR^2}\left(\widetilde\phi_t(z),z\right).$$
The triangle $T_1$ has non-empty interior, and the triangle $T_n$ is the image of $T_1$ under an homothecy of ratio $n$. It follows that there exists $n_0\in\NN$ such that, for $n\geq n_0$, one can find a fundamental $\Delta_n$ for the action $\ZZ^2$ on $\RR^2$ such that $\Delta_n\subset T_n$ and $\mathrm{dist}(\partial\Delta_n,\partial T_n)\geq D+d+d_3$. Now, since $(0,0)$ is in the interior of the rotation set of $\widetilde\phi$, lemma~\ref{lemFranks} implies that $\widetilde\phi$ has a fixed point $x$.  For $n\geq n_0$, let $x_n$ be an element of $x+\ZZ^2$ which is in the fundamental domain $\Delta_n$. Note that $x_n$ is also a fixed point of $\widetilde\phi$, since $\widetilde\phi$ commutes with the action of $\ZZ^2$. The properties of the fundamental domain $\Delta_n$ and the definition of the constant $d_3$ imply that  $\phi_t(x_n)\in T_n$ and $\mathrm{dist}\left(\phi_t(x_n),\partial T_n\right) > D+d$. This completes the proof of claim~\ref{claim-4}. See figure~\ref{f.triangle}.
\end{proof}

Claim~\ref{claim-3} and~\ref{claim-4} imply that, for $n$ large enough, for every $t\in [0,p_n]$, one has 
$$\left\|\widetilde\phi_t(x_n) - \widetilde\phi_t(y_n)\right\| \geq \left\|\widetilde\phi_t(x_n) - v_n(t)\right\| - \left\|v_n(t) - \widetilde\phi_t(y_n)\right\| \geq d.$$
Now consider the curves $\alpha_n:\RR\to\RR^2$ and $\beta_n:\RR\to\RR^2$, defined by $\alpha_n(t)=\widetilde\phi_t(x_n)$ and $\beta_n(t)=\widetilde\phi_t(y_n)$. The curve $\tau_n|{[0,p_n]}$ is a parametrization of the boundary of the triangle $T_n$~; the image of the curve $\alpha_n$ is contained in $T_n$. Therefore we have 
$$\mathrm{Linking}(\alpha_n,\tau_n)=\pm\frac{1}{p_n}$$
(the sign depends on the orientation induced by the parametrization $\tau_n$ on $\partial T_n$). Claim~3 implies that $\|\tau_n(t)-\beta_n(t)\|\leq D$ for every $t\in\RR$ and every $n\in\NN\setminus\{0\}$. Claim~4 implies that there exists an integer $n_0$ such that $\|\alpha_n(t)-\tau_n(t)\|>D+d$ for every $t\in\RR$ and every $n\geq n_0$. According to lemma~\ref{lem3}, this implies that $\mathrm{Linking}(\alpha_n,\beta_n)=\mathrm{Linking}(\alpha_n,\tau_n)$ for every $n\geq n_0$. Lastly, observe that $\mathrm{Linking}(\alpha_n,\beta_n)=\mathrm{Linking}\left(\widetilde J, x_n , y_n\right)$ (by definition of the curves $\alpha_n$ and $\beta_n$). So we finally get
$$\mathrm{Linking}\left(\widetilde J, x_n , y_n\right)=\pm \frac{1}{p_n}$$
for every $n\geq n_0$. This completes the proof of lemma~\ref{lem6}. See figure~\ref{f.triangle}.
\end{proof}

\begin{figure}[h]
\label{f.triangle}
\ifx\JPicScale\undefined\def\JPicScale{1}\fi
\psset{unit=\JPicScale mm}
\psset{linewidth=0.3,dotsep=1,hatchwidth=0.3,hatchsep=1.5,shadowsize=1,dimen=middle}
\psset{dotsize=0.7 2.5,dotscale=1 1,fillcolor=black}
\psset{arrowsize=1 2,arrowlength=1,arrowinset=0.25,tbarsize=0.7 5,bracketlength=0.15,rbracketlength=0.15}
\begin{pspicture}(0,0)(136.79,89.82)
\pspolygon[linewidth=0.1,fillcolor=lightgray,fillstyle=solid](63.27,49.45)(71,49.45)(71,41.34)(63.27,41.34)
\pspolygon[linewidth=0.05,fillcolor=lightgray,fillstyle=solid](29.81,29.39)(34.83,29.39)(34.83,25.74)(29.81,25.74)
\pspolygon[linewidth=0.05,fillcolor=lightgray,fillstyle=solid](53.02,27.77)(58.04,27.77)(58.04,24.12)(53.02,24.12)
\pspolygon[linewidth=0.05,fillcolor=lightgray,fillstyle=solid](76.22,26.15)(81.25,26.15)(81.25,22.5)(76.22,22.5)
\pspolygon[linewidth=0.05,fillcolor=lightgray,fillstyle=solid](99.43,24.53)(104.46,24.53)(104.46,20.88)(99.43,20.88)
\pspolygon[linewidth=0.05,fillcolor=lightgray,fillstyle=solid](111.04,35.87)(116.07,35.87)(116.07,32.22)(111.04,32.22)
\pspolygon[linewidth=0.05,fillcolor=lightgray,fillstyle=solid](99.43,48.84)(104.46,48.84)(104.46,45.19)(99.43,45.19)
\pspolygon[linewidth=0.05,fillcolor=lightgray,fillstyle=solid](87.83,61.8)(92.86,61.8)(92.86,58.16)(87.83,58.16)
\pspolygon[linewidth=0.05,fillcolor=lightgray,fillstyle=solid](76.22,74.77)(81.25,74.77)(81.25,71.12)(76.22,71.12)
\pspolygon[linewidth=0.05,fillcolor=lightgray,fillstyle=solid](53.02,76.39)(58.04,76.39)(58.04,72.75)(53.02,72.75)
\pspolygon[linewidth=0.05,fillcolor=lightgray,fillstyle=solid](41.41,65.05)(46.44,65.05)(46.44,61.4)(41.41,61.4)
\pspolygon[linewidth=0.05,fillcolor=lightgray,fillstyle=solid](122.64,22.91)(127.67,22.91)(127.67,19.26)(122.64,19.26)
\pspolygon[linewidth=0.05,fillcolor=lightgray,fillstyle=solid](18.2,42.35)(23.23,42.35)(23.23,38.71)(18.2,38.71)
\pspolygon[linewidth=0.05,fillcolor=lightgray,fillstyle=solid](29.81,53.7)(34.83,53.7)(34.83,50.05)(29.81,50.05)
\pspolygon[linewidth=0.05,fillcolor=lightgray,fillstyle=solid](64.62,87.74)(69.65,87.74)(69.65,84.09)(64.62,84.09)
\pspolygon[linewidth=0.05,fillcolor=lightgray,fillstyle=solid](6.6,31.01)(11.63,31.01)(11.63,27.36)(6.6,27.36)
\psline[fillcolor=lightgray,fillstyle=solid](9.11,29.19)(67.13,85.91)
\psline[fillcolor=lightgray,fillstyle=solid](67.13,85.91)(125.16,21.08)
\psline[fillcolor=lightgray,fillstyle=solid](125.16,21.08)(9.11,29.19)
\rput{0}(9.11,29.16){\psellipse[linestyle=none,fillstyle=solid](0,0)(0.45,0.45)}
\rput{0}(67.13,85.88){\psellipse[linestyle=none,fillstyle=solid](0,0)(0.44,0.44)}
\rput{90}(20.72,40.51){\psellipse[linestyle=none,fillstyle=solid](0,0)(0.45,0.45)}
\rput{0}(32.32,51.85){\psellipse[linestyle=none,fillstyle=solid](0,0)(0.45,0.45)}
\rput{0}(125.16,21.05){\psellipse[linestyle=none,fillstyle=solid](0,0)(0.44,0.44)}
\rput{0}(43.92,63.19){\psellipse[linestyle=none,fillstyle=solid](0,0)(0.44,0.44)}
\rput{0}(55.52,74.57){\psellipse[linestyle=none,fillstyle=solid](0,0)(0.47,0.47)}
\rput{0}(101.95,22.68){\psellipse[linestyle=none,fillstyle=solid](0,0)(0.45,0.45)}
\rput{0}(113.55,34.01){\psellipse[linestyle=none,fillstyle=solid](0,0)(0.44,0.44)}
\rput{0}(101.95,46.99){\psellipse[linestyle=none,fillstyle=solid](0,0)(0.45,0.45)}
\rput{90}(90.35,59.96){\psellipse[linestyle=none,fillstyle=solid](0,0)(0.45,0.45)}
\rput{0}(78.74,72.92){\psellipse[linestyle=none,fillstyle=solid](0,0)(0.45,0.45)}
\rput{0}(32.32,27.54){\psellipse[linestyle=none,fillstyle=solid](0,0)(0.45,0.45)}
\rput{0}(55.53,25.92){\psellipse[linestyle=none,fillstyle=solid](0,0)(0.45,0.45)}
\rput{0}(78.74,24.3){\psellipse[linestyle=none,fillstyle=solid](0,0)(0.45,0.45)}
\pscustom[linewidth=0.2,linecolor=red]{\psbezier(8.87,27.64)(6.86,28.59)(6.81,29.64)(8.72,31.12)
\psbezier(10.62,32.6)(12.24,33.48)(14.14,34.05)
\psbezier(16.04,34.62)(16.85,35.35)(16.85,36.48)
\psbezier(16.85,37.61)(17.2,38.46)(18.01,39.32)
\psbezier(18.82,40.17)(19.98,40.89)(21.88,41.75)
\psbezier(23.77,42.6)(24.93,43.69)(25.74,45.39)
\psbezier(26.55,47.1)(27.48,48.44)(28.84,49.86)
\psbezier(30.19,51.27)(31.24,52.37)(32.32,53.5)
\psbezier(33.4,54.63)(34.8,55.61)(36.96,56.74)
\psbezier(39.13,57.87)(40.87,59.09)(42.76,60.79)
\psbezier(44.66,62.49)(45.82,64.07)(46.63,66.06)
\psbezier(47.44,68.05)(48.02,69.26)(48.56,70.12)
\psbezier(49.11,70.97)(49.57,70.72)(50.12,69.3)
\psbezier(50.66,67.88)(50.43,67.51)(49.34,68.09)
\psbezier(48.26,68.66)(48.37,69.26)(49.72,70.12)
\psbezier(51.08,70.97)(51.89,71.69)(52.44,72.54)
\psbezier(52.98,73.4)(53.44,74)(53.98,74.57)
\psbezier(54.53,75.14)(55.22,75.62)(56.3,76.19)
\psbezier(57.39,76.76)(58.43,77.73)(59.78,79.43)
\psbezier(61.14,81.13)(62.3,82.47)(63.66,83.88)
\psbezier(65.01,85.3)(66.63,85.55)(69.07,84.7)
\psbezier(71.51,83.84)(72.55,82.75)(72.55,81.05)
\psbezier(72.55,79.35)(72.2,79.23)(71.39,80.65)
\psbezier(70.58,82.06)(70.69,82.43)(71.78,81.86)
\psbezier(72.86,81.29)(73.78,80.44)(74.87,79.02)
\psbezier(75.96,77.61)(76.54,76.27)(76.8,74.57)
\psbezier(77.07,72.87)(77.77,71.9)(79.12,71.33)
\psbezier(80.48,70.76)(81.87,69.91)(83.77,68.49)
\psbezier(85.67,67.07)(86.71,65.24)(87.25,62.41)
\psbezier(87.79,59.57)(89.41,57.27)(92.66,54.72)
\psbezier(95.92,52.16)(97.89,50.22)(99.24,48.23)
\psbezier(100.59,46.24)(101.75,44.78)(103.1,43.36)
\psbezier(104.46,41.95)(105.51,40.61)(106.59,38.91)
\psbezier(107.68,37.21)(107.91,37.57)(107.37,40.12)
\psbezier(106.82,42.68)(107.63,42.07)(110.07,38.1)
\psbezier(112.51,34.13)(114.6,31.34)(117.04,28.78)
\psbezier(119.47,26.22)(120.98,24.52)(122.06,23.1)
\psbezier(123.15,21.69)(123.61,20.84)(123.61,20.27)
\psbezier(123.61,19.7)(120.59,19.58)(113.56,19.87)
\psbezier(106.52,20.15)(102.92,20.76)(101.56,21.89)
\psbezier(100.21,23.02)(98.7,23.27)(96.54,22.7)
\psbezier(94.37,22.13)(92.16,22.25)(89.18,23.1)
\psbezier(86.21,23.96)(84.23,24.08)(82.61,23.51)
\psbezier(80.99,22.94)(79.24,23.06)(76.8,23.92)
\psbezier(74.37,24.77)(72.04,25.37)(69.06,25.94)
\psbezier(66.09,26.51)(63.65,26.75)(60.94,26.75)
\psbezier(58.24,26.75)(55.8,26.99)(52.82,27.56)
\psbezier(49.85,28.14)(46.71,28.38)(42.38,28.38)
\psbezier(38.05,28.38)(35.72,28.01)(34.64,27.16)
\psbezier(33.56,26.31)(31.7,26.43)(28.45,27.56)
\psbezier(25.2,28.7)(22.65,28.82)(19.94,27.97)
\psbezier(17.23,27.12)(15.71,26.66)(14.87,26.45)
\psbezier(14.03,26.24)(13.38,26.17)(12.7,26.22)
\psbezier(12.03,26.27)(10.88,26.7)(8.87,27.64)
\closepath}
\pscustom[linewidth=0.2,linecolor=blue]{\psbezier(69.07,49.45)(69.34,50.58)(69.69,51.07)(70.23,51.07)
\psbezier(70.77,51.07)(71.7,50.58)(73.32,49.45)
\psbezier(74.94,48.32)(75.06,47.34)(73.71,46.2)
\psbezier(72.36,45.07)(71.32,44.82)(70.23,45.39)
\psbezier(69.15,45.96)(69.26,46.44)(70.62,47.02)
\psbezier(71.97,47.59)(71.85,47.1)(70.23,45.39)
\psbezier(68.61,43.69)(67.91,43.32)(67.91,44.18)
\psbezier(67.91,45.03)(67.56,45.88)(66.75,47.02)
\psbezier(65.94,48.15)(66.05,48.52)(67.14,48.24)
\psbezier(68.22,47.95)(68.8,48.32)(69.07,49.45)
\closepath}
\rput{0}(66.36,47.8){\psellipse[linestyle=none,fillstyle=solid](0,0)(0.45,0.45)}
\psline[linewidth=0.1,fillcolor=cyan,fillstyle=solid]{<->}(74.59,48.65)(89.57,57.55)
\rput[bl]{30}(78.88,51.76){$>d$}
\rput(4.11,37.77){$\widetilde R$}
\psline[linewidth=0.1,fillcolor=cyan,fillstyle=solid]{<-}(17.95,42.79)(15.47,50.31)
\psline[linewidth=0.1,fillcolor=cyan,fillstyle=solid]{<-}(67.83,86.64)(74.32,89.68)
\rput[bl](74.87,88.81){$y+n\ell_1(p_1,q_1)$}
\rput[bl](29.84,89.53){$\widetilde R+n\ell_1(p_1,q_1)$}
\rput[bl](24.58,48.72){}
\rput[bl](21.68,48.29){}
\psline[linewidth=0.1,fillcolor=cyan,fillstyle=solid]{<-}(65.75,48.43)(63.13,53.79)
\psline[linewidth=0.1,fillcolor=cyan,fillstyle=solid]{<-}(64.1,42.5)(58.57,43.66)
\rput[bl](53.57,43.93){$\Delta_n$}
\rput[bl](61.61,54.51){$x_n$}
\psline[linewidth=0.1]{<-}(7.5,27.86)
(6.07,23.54)(10.08,21.08)
\rput[bl](10.71,19.2){$y_n$}
\psline[linewidth=0.1,fillcolor=cyan,fillstyle=solid]{<-}(6.79,31.07)(4.02,36.07)
\psline[linewidth=0.1]{<-}(126.25,21.25)
(136.79,19.64)(131.79,6.43)
\rput[l](98.75,3.93){$y+n(\ell_1(p_1,q_1)+\ell_2(p_2,q_2))$}
\psline[linewidth=0.1,fillcolor=cyan,fillstyle=solid]{<-}(64.37,87.8)(55.67,89.82)
\rput[bl](1.42,50.55){$\widetilde R+\ell_1(p_1,q_1)$}
\rput[l](65.58,12.59){$\widetilde R+n(\ell_1(p_1,q_1)+\ell_2(p_2,q_2))$}
\psline[linewidth=0.1]{<-}(127.09,18.91)
(128.75,14.57)(113.91,13.04)
\rput{0}(7.68,28.57){\psellipse[linestyle=none,fillstyle=solid](0,0)(0.45,0.45)}
\psline[linewidth=0.1]{<-}(10.09,29.46)
(16.61,32.41)(20.89,33.13)
\rput(22.95,33.48){$y$}
\rput(25.62,34.2){}
\end{pspicture}
\caption{This figure depicts the situation for $n$ bigger than $n_0$. The blue curve represents the trajectory $t\mapsto \widetilde\phi_t(x_n)$. The red curve represents the trajectory $t\mapsto \widetilde\phi_t(y_n)$. The triangle $T_n$ is drawn in black. The red curve stays at distance less than $D$ of the black triangle. The distance between the blue curve and the black triangle is bigger than $D+d$. Hence, the distance between the red curve and the blue curve is bigger than $d$.}
\end{figure}
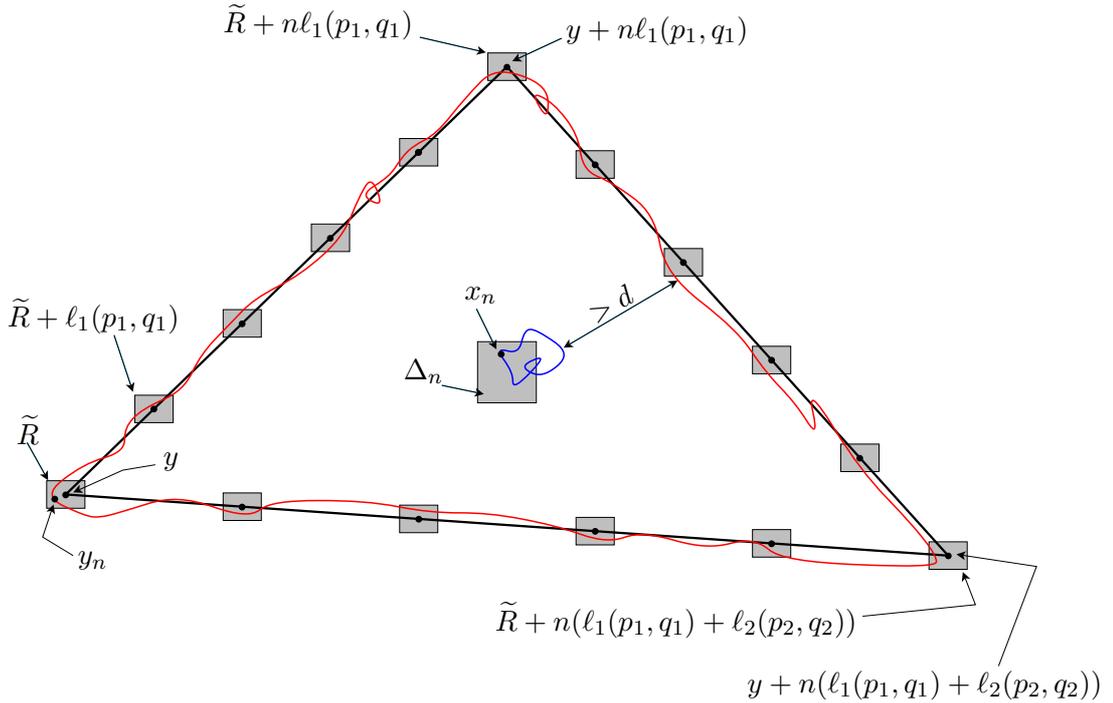

\bibliographystyle{plain}

\end{document}